\newtheorem{theorem}{Theorem}[section]
\newtheorem{proposition}[theorem]{Proposition}
\newtheorem*{proposition41}{Proposition 4.1}
\newtheorem{lemma}[theorem]{Lemma}
\newtheorem{sub-lemma}[theorem]{Sub-Lemma}
\newtheorem{remark}[theorem]{Remark}
\def\A{\mathcal{A}}
\def\L{\mathcal{L}}
\def\Z{\mathcal{Z}}
\def\V{\mathcal{V}}
\def\D{\Delta}
\def\NN{\mathbb{N}}
\def\PP{\mathbb{P}}
\def\RR{\mathbb{R}}
\def\ZZ{\mathbb{Z}}
\def\CC{\mathbb{C}}
\DeclareMathOperator{\supp}{supp}
\DeclareMathOperator{\diam}{diam}
\DeclareMathOperator{\length}{length}
\DeclareMathOperator{\cov}{Cov}
\let\eps=\varepsilon
\let\ds=\displaystyle
\def\glu{\!\!\!\!\!\!\!}
\def\gluu{\glu\glu\glu}
\def\gluuu{\gluu\gluu\gluu}
\def\glu{\!\!\!}
\def\D{\mathcal{D}}
\def\Z{\mathcal{Z}}
\def\RR{{\mathbb R}}
\def\1{{{\mathit 1} \!\!\>\!\! I} }
\renewcommand{\liminf}{\mathop{{\underline {\hbox{{\rm lim}}}}}}
\renewcommand{\limsup}{\mathop{{\overline {\hbox{{\rm lim}}}}}}
\DeclareMathOperator{\esssup}{esssup}
\begin{document}

\title{Back to balls in Billiards}
\author{Fran\c{c}oise P\`ene \and Beno\^\i t Saussol}
\address{1)Universit\'e Europ\'eenne de Bretagne, 
France\\
2)Universit\'e de Brest, laboratoire de
Math\'ematiques, CNRS UMR 6205, France\\
3)Fran\c{c}oise P\`ene is partially supported
by the ANR project TEMI (Th\'eorie Ergodique en
mesure infinie)}
\email{francoise.pene@univ-brest.fr}
\email{benoit.saussol@univ-brest.fr}
\keywords{billiard, Lorentz process,
return time, hyperbolic with singularities, Young tower, local limit theorem}
\subjclass[2000]{Primary: 37D50;  Secondary: 37B20, 60F05}
\begin{abstract}
We consider a billiard in the plane with periodic configuration of convex scatterers.
This system is recurrent, in the sense that almost every orbit comes back arbitrarily close to the initial point. In this paper we study the time needed to get back in an $\eps$-ball about the initial point, in the phase space and also for the position, in the limit when $\eps\to0$.
We establish the existence of an almost sure convergence rate, and prove a convergence in distribution for the rescaled return times.
\end{abstract}
\date{December 19, 2008}
\maketitle
\bibliographystyle{plain}

\section{Introduction}

\subsection{Periodic Lorentz gas}
We consider a planar billiard with periodic 
configuration of scatterers. 
Such a model is also called a Lorentz process.
The motion of a free point particle 
bouncing on the scatterers according to 
Descartes' reflection law defines a flow.
The flow conserves the initial speed, so that 
without loss of generality we will assume that 
the particle moves with unit speed. 
This is a Hamiltonian flow which preserves a 
Liouville measure. Observe that the phase space is 
spatially extended and thus the measure is infinite.
We will suppose that the horizon is finite,
i.e. the time between two consecutive reflections
is uniformly bounded.

\begin{figure}
\begin{center}
  \includegraphics[scale=0.4]{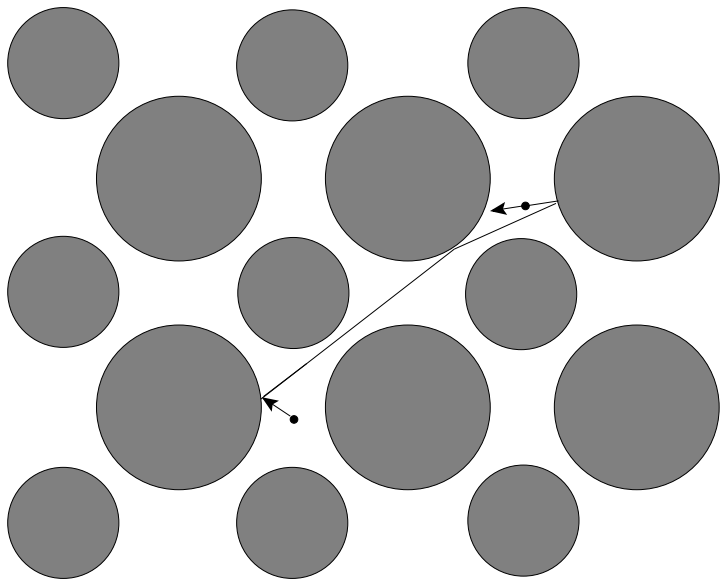}
\end{center}
\caption{Motion of a point particule in the Lorentz process}\label{fig:1}
\end{figure}

We are interested in the quantitative aspect of 
Poincar\'e's recurrence for the billiard flow.
It is known that this system is recurrent, in particular almost every orbit comes back arbitrarily close to the initial point . 
In this paper, our goal is to study the return time in balls, in the limit when the radius goes to zero.
Our main result is that 

(i) the time $Z_\eps$ to get back $\eps$-close to the initial point in the phase space is of order $\exp(\frac{1}{\eps^2})$ for Lebesgue almost all initial conditions

(i') the time ${\mathcal Z}_\eps$ to get back $\eps$-close to the initial position is of order $\exp(\frac{1}{\eps})$ for Lebesgue almost all initial conditions

(ii) we determine the fluctuations of 
$\eps^2\log Z_\eps$ and of $\eps\log {\mathcal Z}
_\eps$  by proving a convergence in distribution to 
a simple law.

This subject has been well studied recently in 
the setting of finite measure preserving 
transformations and typical behavior has been prove 
in a variety of chaotic systems: exponential statistics 
of return time, Poisson law, relation between recurrence rate and 
dimensions (see e.g.~\cite{abagal} for a state of the art in a probabilistic setting; also \cite{bs,colgalsch2}). 
The present work differs by two points from the existing literature.
First, the system in question has continuous time;
second, the main novelty is that its natural invariant measure is 
$\sigma$-finite. Very few works have appeared on 
the topic in this situation~\cite{bressaud-zweimueller,galkimpar,pene-saussol}.

A first reduction of the dynamics at the time of 
collisions with the scatterers (Poincar\'e section) 
and a second reduction by periodicity defines 
the praised billiard map. This map belongs to 
the class of hyperbolic systems with singularities. 
Since the work of Sina\"{i} \cite{Sin70} 
establishing the ergodicity of the billiard map,
it has been studied 
by many authors (let us mention~\cite{GO74}, \cite{BS80,BS81}, \cite{BCS90}
\cite{BCS91}) giving~: Bernoulli property,
central limit theorem. 
In the past ten years,
the new approach of L.-S. Young~\cite{young} 
has been exploited to get new significant 
results for the billiard map.
Among them, let us mention 
the exponential decay of 
correlations~\cite{young}, a new proof of the central 
limit theorem~\cite{young} and the local limit theorem
proved by Sz\'asz and Varj\'u~\cite{SV}.

Conze~\cite{Conze} and Schmidt~\cite{Schmidt}
proved that recurrence of the Lorentz process
follows from some central limit theorem
for the billiard map.
Sz\'asz and Varj\'u \cite{SV} used 
their local limit theorem
to give another proof of the recurrence. 
As proved by Sim\'anyi \cite{Simanyi} 
and the first named author \cite{FPCRAS}, 
once its recurrence proved,
it is not difficult to prove the
total ergodicity of the Lorentz process.
More recently, estimates on the first return time in the initial
cell have been established by Dolgopyat, Sz\'asz
and Varj\'u in \cite{DSV} and an analogous estimate
for the return time in the initial obstacle
follows from a paper of the first named author
\cite{FP08DCDS}.

\subsection{Precise description of the model and statement of the results}

We now precisely define the billiard flow $\Phi_t$.
Let $(O_i)_{i\in I}$ be a finite number of \textbf{open}, \textbf{convex} subsets
of $\RR^2$ with $C^3$ boundaries and \textbf{non-null
curvature}. We let $Q=\ds\RR^2\setminus \bigcup_{i\in I,\ell\in\ZZ^2}\ell+O_i$ be the billiard domain in the plane.
We suppose that the sets $\ell+O_i$ in this union have pairwise disjoint closure.
\begin{figure}
\begin{center}
  \includegraphics[scale=0.5]{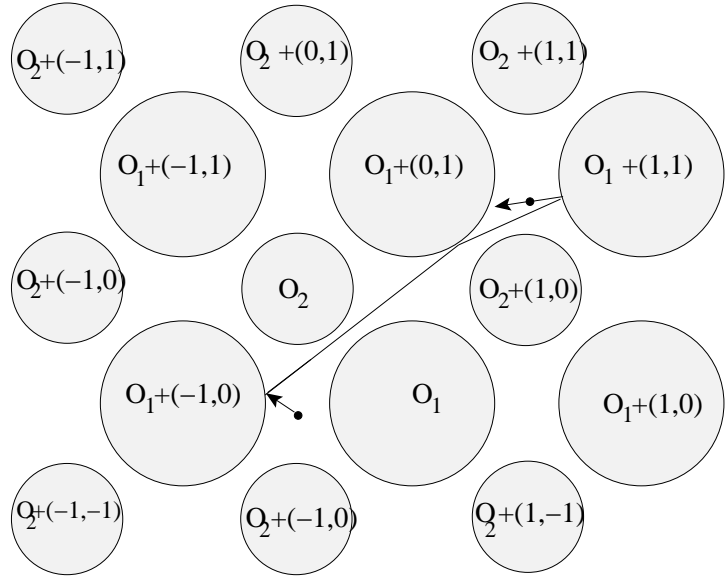}\end{center}
\caption{Labeling of the obstacles}\label{fig:label}
\end{figure}
The flow is given by the motion of a point particle with position $q\in Q$ and velocity $v\in S^1$. Namely, the motion is ballistic if there are no collisions with an obstacle in the time interval $[0,t]$: $\Phi_t(q,v)=(q+tv,v)$. At the time of a collision the velocity changes according to reflection law $v\mapsto v'$: If $n_q$ denotes the normal to the boundary of the obstacle at the point of collision $q\in\partial Q$, pointing inside the domain (i.e. outside the obstacle) then the angle $\angle(n_q,v')=\pi-\angle(n_q,v)$; see Figure~\ref{fig:billiard}.
\begin{figure}
\begin{center}
  \includegraphics[scale=0.5]{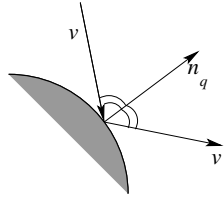}
\end{center}
\caption{Elastic reflection}\label{fig:billiard}
\end{figure}
We assume that the billiard has \textbf{finite horizon}, in the sense that the time between two consecutive collisions is uniformly bounded.

We endow the space $X=Q\times S^1$ with the product metric 
\[
d((q,v),(q',v'))=\max( d(q,q'),d(v,v')), 
\]
where for simplicity we denote all the distances by $d$. 
The flow preserves the Lebesgue measure on $Q\times S^1$; it is $\sigma$-finite but nevertheless the system is well known to be recurrent~\cite{Conze,Schmidt,SV}.

For $x\in X$ and $\eps>0$ we define the minimal time to get back $\eps$-close to the initial point by
\begin{equation}\label{eq:rec1}
Z_\eps(x):= \inf \left\{ t>\eps\colon d(\Phi_t(x),x)<\eps \right\}.
\end{equation}
The quantity $Z_\eps(\cdot)$ is well defined and finite for, at least, Lebesgue a.e. $x$.
We denote by $\Pi_Q:X=Q\times S^1\rightarrow Q$ the canonical projection.
We also define the minimal time to get back $\eps$-close to the initial position by
\begin{equation}\label{eq:rec2}
\Z_\eps(x):= \inf \left\{ t>\eps\colon d(\Pi_Q(\Phi_t(x)),\Pi_Q(x))<\eps \right\}.
\end{equation}
In the paper we give a precise asymptotic analysis of the return times $Z_\eps$ and $\Z_\eps$ expressed by our main theorem.
We say that a random variable $Y_\eps$ defined on $X$ converges in the strong distribution sense to a random variable $Y$ if for any probability $\PP \ll Leb$, 
$Y_\eps\to Y$ in distribution under $\PP$.

\begin{theorem}\label{thm:main}
The billiard flow satisfies

(i) for Lebesgue a.e. $x\in X$ we have $\ds\lim_{\eps\to0}\frac{\log\log Z_\eps(x)}{-\log\eps}=2$;

(ii) the random variable $\eps^2 \log Z_\eps$ converges as $\eps\to0$ in the strong distribution sense to a random variable $Y_0$ with distribution $P(Y_0>t)=\frac{1}{1+\beta_0 t}$ for some constant $\beta_0>0$;

(iii) for Lebesgue a.e. $x\in X$ we have 
$\ds\lim_{\eps\to0}\frac{\log\log {\mathcal Z}_\eps(x)}
{-\log\eps}=1$;

(iv) the random variable $\eps\log {\mathcal Z}_\eps$ converges as $\eps\to0$ in the strong distribution sense to a random variable $Y_1$ with distribution $P(Y_1>t)=\frac{1}{1+\beta_1 t}$ for some constant $\beta_1>0$.
\end{theorem}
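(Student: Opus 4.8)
The plan is to transfer the problem from the continuous-time billiard flow to the discrete-time billiard map, where Young's tower technology and the local limit theorem of Sz\'asz and Varj\'u are available, and then to import known results about return times in the infinite-measure setting. First I would reduce the flow to the billiard map $T$ on the collision space $M$ (the Poincar\'e section modulo $\ZZ^2$), together with the $\ZZ^2$-valued cell-change cocycle $\kappa$; recurrence of the flow then becomes recurrence of the $\ZZ^2$-extension of $T$. The key analytic input is that the cocycle $\kappa$ satisfies a \emph{local} central limit theorem with the usual $n^{-1}$ normalization, so that the probability of returning to the initial cell at time $n$ decays like $c/n$, and more precisely the Green's function of the extended chain restricted to a fixed cell grows like $\log N$ after $N$ steps. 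This is the mechanism that produces the double-exponential scale: to return $\eps$-close in phase space one must first return to a cell of diameter comparable to the starting cell (cost roughly $e^{1/\eps^2}$ worth of collisions because the return probability to a small target of measure $\sim\eps^2$ inside a recurrent but logarithmically-transient chain behaves like $\exp(-c/\eps^2)$), while returning $\eps$-close only in position relaxes the velocity constraint and replaces $\eps^2$ by $\eps$.

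Next I would make this precise by the standard two-step scheme for recurrence rates. Step one: the almost-sure statements (i) and (iii). Using the Young tower for $T$ with exponential tails, the measure of a ball $B(x,\eps)$ in $M$ is $\asymp \eps^2$ (resp.\ $\asymp\eps$ for the position-only ball, since only one coordinate is constrained — here one must be slightly careful about the unstable/stable geometry and the fact that balls in $Q$ lift to tubes in $M$), and the expected return time to such a ball for the $\ZZ^2$-extension is, by the logarithmic growth of the Green's function, of order $\exp(c\,\mathrm{Leb}(B)^{-1})$ up to subexponential corrections. A Borel--Cantelli argument along a geometric sequence $\eps_k=\theta^k$, together with monotonicity of $\eps\mapsto Z_\eps$ and $\eps\mapsto\Z_\eps$ to fill the gaps, upgrades this to the stated $\liminf=\limsup$ along all $\eps\to0$. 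The passage from the map back to the flow costs only a bounded factor (finite horizon bounds the free-flight time), so the double-logarithmic asymptotics survive verbatim.

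Step two: the distributional statements (ii) and (iv). Here I would invoke the infinite-ergodic-theory analogue of the exponential law for return times: for a recurrent ergodic $\ZZ^2$-extension with the above Green's-function asymptotics, the rescaled hitting time $\mathrm{Leb}(B)\cdot(\text{something like }\log)$ of a shrinking ball converges, after the right normalization, to an exponential-type law — and because the Green's function grows like $\log N$ rather than being bounded, the correct rescaling is $\eps^2\log Z_\eps$ (resp.\ $\eps\log\Z_\eps$) and the limit law is the one with tail $1/(1+\beta t)$, which is exactly $P(\mathcal E > \beta t)$ for ... i.e.\ it arises as $e^{-\mathcal E/(\beta\cdot)}$-type mixing. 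Concretely I expect the limit to come from: conditioned on the tower, the number of returns to the target cell before time $e^{s/\eps^2}$ is approximately Poisson with parameter $\asymp s$, and the event $\{Z_\eps > e^{s/\eps^2}\}$ is the event that none of those $\asymp s$ cell-returns lands in the correct $\eps$-ball, each doing so with probability $\asymp\eps^2/(\text{cell measure})$ independently — giving $P(\eps^2\log Z_\eps>s)\to 1/(1+\beta_0 s)$. The strong-distribution (i.e.\ for every $\PP\ll\mathrm{Leb}$) upgrade follows from a density/absolute-continuity argument once convergence under $\mathrm{Leb}$-on-a-fundamental-domain is established, using that the limit is non-atomic.

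The main obstacle, I expect, is the geometric and combinatorial bookkeeping for the \emph{position-only} return time $\Z_\eps$: a ball in configuration space $Q$ pulls back under $\Pi_Q\circ\Phi_t$ to a set in the collision space that is a union over velocity directions and over free-flight segments crossing the ball, so it is not a metric ball in $M$ and its $T$-dynamics must be controlled directly; establishing that its expected return time is $\exp((\beta_1+o(1))/\eps)$ and the corresponding Poissonian count, uniformly in the direction parameter, is where the delicate use of the local limit theorem, the tower estimates, and distortion bounds all combine. A secondary but real difficulty is making the Borel--Cantelli gaps and the flow-vs-map comparison tight enough that the constant $2$ (resp.\ $1$) in (i) and (iii) is exact and not merely an upper or lower bound.
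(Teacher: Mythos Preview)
Your global architecture --- reduce to the extended billiard map via the Poincar\'e section, exploit the Sz\'asz--Varj\'u local limit theorem for the $\ZZ^2$-cocycle $\kappa$, run Borel--Cantelli for the almost-sure statements, then pass back to the flow using the finite-horizon bounds on $\tau$ --- matches the paper's, and your identification of the position case as the delicate one (the preimage $\A_\eps(q)$ is not a metric ball in $M$) is exactly right.

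There is, however, a genuine gap in your Step two. Your heuristic for the limit law is that the number of returns to the initial cell by time $e^{s/\eps^2}$ is ``approximately Poisson with parameter $\asymp s$'' and that, conditionally, each cell-return independently lands in the $\eps$-ball with probability $\asymp\eps^2$. But the number of cell-returns by time $N$ is concentrated around $\beta\log N=\beta s/\eps^2$ (not $\asymp s$), and if one naively runs your ``thinning'' argument one gets $(1-c\eps^2)^{\beta s/\eps^2}\to e^{-c\beta s}$, i.e.\ an \emph{exponential} tail, not $P(Y_0>t)=1/(1+\beta_0 t)$. The Poisson/independence picture is the correct mechanism for the finite-measure billiard map (where the paper indeed proves an exponential law, Theorem~\ref{thm:billiardmap}), but it does not survive the passage to the $\ZZ^2$-extension.

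The paper obtains the Cauchy-type tail by a completely different device: the Dvoretzky--Erd\H{o}s \emph{last-entrance} decomposition
\[
\bar\mu(D)=\sum_{q=0}^{N}\bar\mu\bigl(D;\ S_q\kappa=0;\ \bar T^{-q}(A\cap\{W_A>N-q\})\bigr),
\]
which, after cylinder approximation and application of the local limit theorem (Proposition~\ref{pro:cullt}), collapses to the approximate identity
\[
\bar\mu(W_A>N\mid A)\,\bigl(1+\beta\log N\,\bar\mu(A)\bigr)=1+o(1),
\]
and this is precisely what produces the law $1/(1+\beta t)$ once one sets $N=\exp(t/\bar\mu(A))$. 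This identity is also what drives the \emph{upper bound} in the almost-sure statements (i) and (iii): without it you can get the lower bound $\liminf\ge 2$ (resp.\ $\ge1$) by a direct Borel--Cantelli on $\bar\mu(d(\cdot,T^n\cdot)<\eps_n)$, but the matching $\limsup$ requires controlling $\bar\mu(W_\eps\ge N)$ from above, and that is again the Dvoretzky--Erd\H{o}s inequality. So this missing ingredient affects both halves of your plan, not only the distributional one.
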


\begin{remark}\label{rem:2}
The constant $\beta_0$ is equal to $\frac{2\beta}{\sum_{i\in I}\vert\partial O_i\vert}$, with $\beta=\frac{1}{2\pi\sqrt{\det\Sigma^2}}$ where $\Sigma^2$ is the asymptotic covariance matrix of the cell shift function $\kappa$ for the billiard map
$(\bar T,\bar\mu)$ defined by~\eqref{eq:cov}; See Section~\ref{sec:bilext} for precisions.
The constant $\beta_1$ is equal to $\frac{2\pi\beta}{\sum_{i\in I}|\partial O_i|}$.
\end{remark}

In Section~\ref{sec:maps} we define the billiard maps associated to our billiard flow.
In Section~\ref{sec:recbilmap} we investigate the behavior of return times for the billiard map. In Section~\ref{sec:bilext} we pursue this analysis for the extended billiard map, and building on the previous section we prove some preparatory results.
Section~\ref{sec:proofmain} is then devoted to the proof of the part of Theorem~\ref{thm:main} relative to returns in the phase space. Finally, in Section~\ref{sec:proofpos} we prove the part relative to returns for the position.

\section{Billiard maps}\label{sec:maps}

\subsection{Discrete time dynamics and new coordinates}

In order to study the statistical properties of the billiard flow, it is classical to make a \emph{Poincar\'e section} at collisions times, i.e. when $\Phi_t(q,v)\in\partial Q\times S^1$.
For definiteness, when $q\in\partial Q$ we choose the velocity $v$ pointing outside the obstacle, that is right after the collision. Denote for such a $q\in \partial Q$ and $v\in S^1$ by $\tau(q,v)$ the time before the next collision: $\tau(q,v)=\min\{t>0\colon \Phi_t(q,v)\in\partial Q\times S^1\}$.
Let $\phi$ be the Poincar\'e map: 
$\phi(q,v)=\Phi_{\tau(q,v)}(q,v)=(q',v')$ 
(see Figure~\ref{fig:bilmap}).
\begin{figure}
\begin{center}
  \includegraphics[scale=0.5]{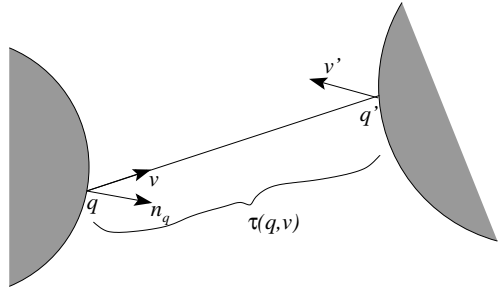}
\end{center}
\caption{The Poincar\'e section at collisions times}\label{fig:bilmap}
\end{figure}

Next, we make a \emph{change of coordinates} for the 
base map. For each obstacle $O_i$ we choose an arbitrary origin and parametrize its boundary $\partial O_i$ by 
counter-clockwise arc-length. 
The position $q\in\partial Q$ is represented by $(\ell,i,r)$ if $q\in\ell+\partial O_i$ and $r$ is the parametrization of the point $q$.
The normal of the boundary at each point $q$ is denoted by $n_q$ and the velocity $v$ is represented by its angle $\varphi\in[-\frac\pi2,\frac\pi2]$ with $n_q$.
Let 
\[
M=\bigcup_{\ell\in\ZZ^2}\bigcup_{i\in I} \left(\{(\ell,i)\}\times\RR/_{|\partial O_i|\ZZ}\times\left[-\frac\pi2,\frac\pi2\right]\right)
\]
endowed with the product metric. Denote by $\psi\colon M\to\partial Q\times S^1$ the change of coordinate, such that $\psi(\ell,i,r,\varphi)=(q,v)$.
The \emph{extended billiard map} $T\colon M\to M$ is the Poincar\'e map $\phi$ in these new coordinates: $T=\psi^{-1}\circ\phi\circ\psi$.
The flow $\Phi_t$ is conjugated to the special flow $\Psi_t$ defined over the map $T$ under the free flight function $\tau\circ\psi$. Let $M_\tau=\{(m,s)\in M\times\RR\colon 0\le s<\tau(\psi(m))\}$.
We denote by $\pi\colon M_\tau\to M$ the projection onto the base defined by $\pi(m,s)=m$ and extends the conjugation $\psi$ to $M_\tau$ by setting $\psi(m,s)=\Phi_s(\psi(m))$.

Let $\bar M$ be the subset of $M$ corresponding to the cell $\ell=0$.
We define  the \emph{billiard map} $\bar T\colon \bar M\to \bar M$ corresponding to the quotient map of $T$ by $\ZZ^2$ ; this is well defined by $\ZZ^2$-periodicity of the obstacles.
The \emph{cell shift} function $\kappa\colon M\to\ZZ^2$ is defined by 
$\kappa(\ell,i,r,\varphi)=\ell'-\ell$ if 
$T(\ell,i,r,\varphi)=(\ell',i',r',\varphi')$.

During the proof of our theorems on the billiard flow we will prove a version of the \emph{local limit theorem} for the billiard map suitable for our purpose, as well as a property of recurrence called \emph{exponential law for the return time} statistics. 

\subsection{Different quantities related to recurrence}

The notion of recurrence in these billiard maps gives rise to the definition of the following different quantities.
Let $m\in M$ and $\bar m\in\bar M$.

Let $W_A(m)$ be the first iterate $n\ge1$ such that $T^nm\in A$ for some subset $A\subset M$.

Let $\bar W_B(\bar m)$ be the first iterate $n\ge1$ such that $\bar T^n \bar m\in B$ for some subset $B\subset\bar M$.

Let $W_\eps(m)$ be the first iterate $n\ge1$ such that $d(T^nm,m)<\eps$ for some $\eps>0$.

Let $\bar W_\eps(\bar m)$ be the first iterate $n\ge1$ such that $d(\bar T^n\bar m,\bar m)<\eps$ for some $\eps>0$.

\section{Recurrence for the billiard map}\label{sec:recbilmap}

Recall that the billiard map $\bar T$ preserves a probability measure $\bar\mu$ equivalent to the Lebesgue measure on $\bar M$, whose density is given by
\[
\rho(\ell,i,r,\varphi)=\frac{1}{2\Gamma}\cos\varphi,
\quad\text{where}\quad
\Gamma:=\sum_{i\in I}|\partial O_i|.
\]
The billiard system $(\bar M,\bar T)$ is two dimensional with one negative and one positive Lyapunov exponent and the singularities are not too wild, therefore the result on recurrence rate \cite{saussol} applies.
\begin{theorem}[\cite{saussol}]\label{thm:rrbilliard}
The recurrence rate of the billiard map is equal to the dimension:
\[
\lim_{\eps\to0}\frac{\log \bar W_\eps}{-\log\eps}=2\quad\bar\mu\text{ a.e.}
\]
\end{theorem}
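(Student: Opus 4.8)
The statement is an instance of the general recurrence-rate theorem of \cite{saussol}, so the plan is simply to check that its hypotheses are met by the billiard map $(\bar M,\bar T,\bar\mu)$. That theorem asserts that, for a system enjoying a sufficiently rapid mixing rate together with a mild regularity of the ambient space and of the singularity set, the lower and upper recurrence rates at a point coincide with the lower and upper pointwise dimensions of the invariant measure there; in particular, if the pointwise dimension of the measure exists and equals $d$ almost everywhere, then $\lim_{\eps\to0}\frac{\log\bar W_\eps(\bar m)}{-\log\eps}=d$ for $\bar\mu$-a.e.\ $\bar m$. I must therefore verify three things: (a) rapid (super-polynomial) mixing; (b) that the pointwise dimension of $\bar\mu$ is $2$ almost everywhere; (c) the ``not too wild'' regularity condition on the singularities.

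For (a) I would invoke Young's construction \cite{young}: the billiard map $\bar T$ admits a Young tower with exponentially decaying return-time tails, whence $(\bar M,\bar T,\bar\mu)$ has exponential decay of correlations for H\"older observables, which is far more than the super-polynomial rate demanded by \cite{saussol}. For (b), recall that $\bar\mu$ is absolutely continuous with density $\rho(\ell,i,r,\varphi)=\frac{\cos\varphi}{2\Gamma}$. This density is bounded above everywhere and bounded below by a positive constant away from the grazing set $\{|\varphi|=\frac\pi2\}$, and the $\bar\mu$-measure of an $r$-neighborhood of the grazing set is $O(r^2)$. Hence for $\bar\mu$-a.e.\ $\bar m$ one has $c\,r^2\le\bar\mu(B(\bar m,r))\le C\,r^2$ for all small $r$, so the pointwise dimension of $\bar\mu$ exists and equals $2$ off a null set; the same two-sided estimate furnishes the uniform control on the $\bar\mu$-mass of thin annuli $B(\bar m,r+\delta)\setminus B(\bar m,r)$ that enters \cite{saussol}.

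For (c), the space $\bar M$ is a finite union of rectangles, a compact metric space of dimension $2$, and the singularity set of $\bar T$ and of each of its iterates is a countable union of compact piecewise-$C^1$ curves; consequently the $\bar\mu$-measure of an $r$-neighborhood of any such curve is $O(r)$. This is exactly the regularity under which \cite{saussol} is formulated, so combining (a)--(c) yields $\lim_{\eps\to0}\frac{\log\bar W_\eps}{-\log\eps}=2$ for $\bar\mu$-a.e.\ point, which is the claim.

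The delicate point is the interaction of (b) and (c): one must make sure that the logarithmic degeneracy of the density near grazing collisions, combined with the singularity curves, does not destroy the quantitative estimates — on the $\bar\mu$-mass of balls, of thin annuli, and of neighborhoods of the singularity set — that \cite{saussol} needs to hold simultaneously. Fortunately all of these are classical in the billiard setting: an $r$-neighborhood of the grazing set carries only $O(r^2)$ of the mass, and an $r$-neighborhood of the singularity curves carries $O(r)$, both compatible with the hypotheses of \cite{saussol}. Everything else is a direct application of the cited result.
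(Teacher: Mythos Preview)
Your proposal is correct and matches the paper's treatment: the paper does not give an independent proof but simply notes, in the sentence preceding the statement, that the billiard map is two-dimensional with one positive and one negative Lyapunov exponent and singularities that are ``not too wild,'' and then cites \cite{saussol}. Your verification of the hypotheses (super-polynomial mixing via Young's tower, pointwise dimension $2$ from the absolutely continuous density, and the curve-structure of the singularity set) is exactly the unpacking of that one-sentence justification.
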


\begin{lemma} \label{lem:density}
For $\bar\mu$-almost every $m\in \bar M$,
for all $c_1>0$, $c_2>0$, $\alpha>0$
and for all family $(D_\varepsilon)_\varepsilon$ 
of sets containing $m$
such that $D_\varepsilon\subseteq B(m,c_2\varepsilon)$
and $\bar\mu(D_\varepsilon)\ge 
c_1(\diam(D_\varepsilon))^2$,
we have
\[
\bar\mu(\bar W_{B(m,\eps)}\le \eps^{-2+\alpha}|D_\eps)\to0.
\]
\end{lemma}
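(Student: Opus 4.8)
The plan is to show that the conditional measure of the event $\{\bar W_{B(m,\eps)}\le \eps^{-2+\alpha}\}$ given $D_\eps$ goes to zero by a first-moment (union-bound) estimate on the number of short returns. Write $N=\lfloor\eps^{-2+\alpha}\rfloor$ and estimate
\[
\bar\mu(\bar W_{B(m,\eps)}\le N\mid D_\eps)
=\frac{\bar\mu\bigl(D_\eps\cap\bigcup_{n=1}^{N}\bar T^{-n}B(m,\eps)\bigr)}{\bar\mu(D_\eps)}
\le\frac{1}{\bar\mu(D_\eps)}\sum_{n=1}^{N}\bar\mu\bigl(D_\eps\cap\bar T^{-n}B(m,\eps)\bigr).
\]
Using $D_\eps\subseteq B(m,c_2\eps)$ we bound each term by $\bar\mu\bigl(B(m,c_2\eps)\cap\bar T^{-n}B(m,\eps)\bigr)$, and by the lower bound $\bar\mu(D_\eps)\ge c_1(\diam D_\eps)^2$ the denominator is at least $c_1\eps^2$ (up to a constant, since also $D_\eps$ must be comparable to $\eps$ in size for the inequality to be useful; more precisely we only need $\bar\mu(D_\eps)$ not too small, and the statement gives exactly that via the diameter — if $\diam D_\eps$ is much smaller than $\eps$ the inequality $\bar\mu(D_\eps)\ge c_1(\diam D_\eps)^2$ is too weak, so one first argues that one may assume $\diam D_\eps\ge c\eps$, or rather uses $\bar\mu(D_\eps)$ directly and absorbs it). So it suffices to prove
\[
\frac{1}{\eps^2}\sum_{n=1}^{\eps^{-2+\alpha}}\bar\mu\bigl(B(m,c_2\eps)\cap\bar T^{-n}B(m,\eps)\bigr)\xrightarrow[\eps\to0]{}0
\quad\text{for $\bar\mu$-a.e. }m.
\]

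For this I would split the sum into short returns $n\le n_0$ (for a large fixed $n_0$) and intermediate returns $n_0<n\le\eps^{-2+\alpha}$. For the short-return part: for fixed $n$, the set $\{m : \bar W_{B(m,\eps)}\le n\}$ has measure tending to $0$ as $\eps\to0$ because $\bar T$ has no periodic points of the relevant type in a full-measure set (aperiodicity), or more robustly one invokes that $\sum_{n\le n_0}\bar\mu(B(m,c_2\eps)\cap\bar T^{-n}B(m,\eps))=o(\eps^2)$ for a.e. $m$ — this can be obtained from the fact that $\bar T^n$ is non-singular and its graph is transverse to the diagonal away from a null set, so the relevant intersection has measure $o(\eps^2)$; summing over the finitely many $n\le n_0$ preserves this. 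For the intermediate part, the key tool is the exponential decay of correlations for the billiard map $(\bar M,\bar T,\bar\mu)$ (Young \cite{young}): with $f=\1_{B(m,c_2\eps)}$ and $g=\1_{B(m,\eps)}$,
\[
\bar\mu(B(m,c_2\eps)\cap\bar T^{-n}B(m,\eps))=\bar\mu(f\cdot g\circ\bar T^n)
\le \bar\mu(f)\bar\mu(g)+C\theta^n,
\]
with $\theta<1$, up to the usual caveat that one must regularize the indicators of balls to Hölder functions (standard: approximate $\1_B$ from above and below by Lipschitz functions at scale $\eps$, controlling the error by the measure of an annulus, which is $O(\eps^2)$ here since the density is bounded). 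Then $\bar\mu(f)\bar\mu(g)=O(\eps^4)$, so
\[
\sum_{n_0<n\le\eps^{-2+\alpha}}\bar\mu(f\cdot g\circ\bar T^n)\le \eps^{-2+\alpha}\cdot O(\eps^4)+C\sum_{n>n_0}\theta^n=O(\eps^{2+\alpha})+C\frac{\theta^{n_0}}{1-\theta},
\]
and dividing by $\eps^2$: the first term is $O(\eps^\alpha)\to0$, and the second is $C\theta^{n_0}/(\eps^2(1-\theta))$ — this does not go to zero for fixed $n_0$.

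This last point is the main obstacle and shows the naive split is too crude: the correlation error $\theta^n$ is summable but its contribution divided by $\eps^2$ blows up. The fix is to let $n_0=n_0(\eps)\to\infty$ slowly, e.g. $n_0=\lceil K\log(1/\eps)\rceil$ with $K$ large enough that $\theta^{n_0}\le\eps^{3}$; then the tail $\sum_{n>n_0}\theta^n/\eps^2=O(\eps)$, while for the short part $n\le n_0=O(\log(1/\eps))$ one needs a quantitative version of "$\bar\mu(B(m,c_2\eps)\cap\bar T^{-n}B(m,\eps))=o(\eps^2)$ uniformly for $n\le C\log(1/\eps)$ and a.e. $m$". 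This uniform-in-$n$ short-return bound is exactly the kind of statement controlled by the structure theory behind Theorem~\ref{thm:rrbilliard} — it follows from the fact that for a.e. $m$ the orbit segments $m,\bar Tm,\dots,\bar T^{n}m$ separate at an exponential rate (hyperbolicity), so $\bar T^{-n}B(m,\eps)$ meets $B(m,c_2\eps)$ only in a set whose $\bar\mu$-measure is $O(\eps^2\cdot\lambda^{-n})$ for some expansion factor $\lambda>1$ away from the singularity set, and one controls the exceptional $m$ near singularities using that the singularity set is a finite union of smooth curves (so has $\eps$-neighborhood of measure $O(\eps)$, and Borel–Cantelli along $\eps=2^{-k}$ removes them for a.e. $m$). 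Putting these together, $\frac1{\eps^2}\sum_{n=1}^{\eps^{-2+\alpha}}\bar\mu(B(m,c_2\eps)\cap\bar T^{-n}B(m,\eps))=O(\eps^\alpha)+O(\eps)+o(1)\to0$ for a.e. $m$, which is the claim. I expect the delicate part to be making the short-return estimate genuinely uniform over $n\le C\log(1/\eps)$ while keeping the exceptional null set independent of $\eps$; everything else is standard decay-of-correlations bookkeeping.
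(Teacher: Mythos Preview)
Your approach diverges substantially from the paper's and contains a genuine gap.

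The paper's proof is a two-line Lebesgue density argument built on Theorem~\ref{thm:rrbilliard}: fix $a\in(0,\alpha)$, set $F_a=\{m':\forall\eps\le\eps_0,\ \log\bar W_\eps(m')/(-\log\eps)\ge 2-a\}$, note $\bar\mu(F_a)\to1$, and observe that $D_\eps\cap\{\bar W_{B(m,\eps)}\le\eps^{-2+\alpha}\}\subset D_\eps\cap F_a^c$ for small $\eps$. If $m$ is a Lebesgue density point of $F_a$, then $\bar\mu(F_a^c\mid D_\eps)\le \bar\mu(F_a^c\mid B(m,\diam D_\eps))\cdot \bar\mu(B(m,\diam D_\eps))/\bar\mu(D_\eps)\to0$, where the ratio stays bounded precisely by the hypothesis $\bar\mu(D_\eps)\ge c_1(\diam D_\eps)^2$. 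Nothing else about $D_\eps$ is used.

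Your direct first-moment/decay-of-correlations route fails for the lemma \emph{as stated} for two related reasons. First, the hypotheses put no lower bound on $\diam D_\eps$ in terms of $\eps$: one may have $\diam D_\eps\ll\eps$ and hence $\bar\mu(D_\eps)\ll\eps^2$. Your parenthetical ``one first argues that one may assume $\diam D_\eps\ge c\eps$'' is not justified and is in fact false under the given hypotheses. Once you replace $D_\eps$ by $B(m,c_2\eps)$ in the numerator while keeping the possibly tiny $\bar\mu(D_\eps)$ in the denominator, the estimate blows up. Second, even if you keep $D_\eps$ in the numerator to avoid this, the lemma imposes no regularity on $D_\eps$ (no boundary condition, no cylinder structure), so you cannot legitimately apply decay of correlations to $1_{D_\eps}$; and the correlation error term $C\theta^{n_0}/\bar\mu(D_\eps)$ still has no control without a lower bound on $\bar\mu(D_\eps)$ in terms of $\eps$. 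The ``fatness'' condition $\bar\mu(D_\eps)\ge c_1(\diam D_\eps)^2$ is exactly tailored to a density-point argument, not to a mixing argument.

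Finally, your short-return step (the claim that $\bar\mu(B(m,c_2\eps)\cap\bar T^{-n}B(m,\eps))=O(\eps^2\lambda^{-n})$ uniformly for $n\le C\log(1/\eps)$, a.e.\ $m$) is essentially a quantitative lower bound on $\bar W_\eps$, i.e.\ the nontrivial half of Theorem~\ref{thm:rrbilliard}. So you are re-deriving from scratch the very result the paper invokes as a black box, and the sketch you give (transversality away from singularities plus Borel--Cantelli) is far from a proof in the billiard setting. The cleaner path is the paper's: accept Theorem~\ref{thm:rrbilliard}, pass to the a.e.\ good set $F_a$, and use Lebesgue density with the fatness hypothesis.
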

\begin{proof}
Let $\alpha>0$, $c_1>0$ and $c_2>0$.
Choose some $a\in(0,\alpha)$ and set for some $\eps_0>0$
\[
F_a=\{m\in\bar M \colon \forall\eps\le\eps_0,\frac{\log \bar W_\eps(m)}{-\log\eps}\ge2-a\}.
\]
By Theorem~\ref{thm:rrbilliard}
we have $\bar\mu(F_a)\to1$ as $\eps_0\to0$. 
There exists $\varepsilon_{1}>0$ such that,
for any $\eps<\eps_1$ we have the inclusions
\[
D_\eps\cap\{\bar W_{B(m,\eps)}\le 
\eps^{-2+\alpha}\}
\subset
D_\eps\cap\{\bar W_{(1+c_2)\eps}\le\eps^{-2+\alpha}\}
\subset 
D_\eps\cap F_a^c.
\]
Thus for any density point $m$ of the set $F_a$ relative 
to the Lebesgue basis given by $(B(\cdot,\eps))
_\eps$ we obtain
\[
\begin{split}
\bar\mu(\bar W_{B(m,\eps)}\le \eps^{-2+\alpha}|D_\eps)
&\le
\bar\mu(F_a^c|D_\eps)\\
&\le
\bar\mu(F_a^c | B(m,\diam D_\eps))\frac{\bar\mu(B(m,\diam D_\eps))}{\bar\mu(D_\eps)}
\to0
\end{split}
\]
as $\eps\to0$.
\end{proof}
We call \emph{non-sticky} a point $m$ satisfying the
conclusion of Lemma~\ref{lem:density}
and we denote by $\mathcal{NS}$ the set
of non-sticky points. We emphasize that $\bar\mu(\mathcal{NS})=1$

Next theorem says that the return times and entrance times in balls are exponentially distributed for the billiard map. 

\begin{theorem}\label{thm:billiardmap}
Let $m\in\mathcal{NS}$ be a non-sticky point. We have
\[
\begin{split}
\bar\mu(\bar\mu(B(m,\eps))\bar W_{B(m,\eps)}(\cdot)>t|B(m,\eps))&\to e^{-t}, \\
\bar\mu(\bar\mu(B(m,\eps))\bar W_{B(m,\eps)}(\cdot)>t)&\to e^{-t},
\end{split}
\]
uniformly in $t\ge0$, as $\eps\to0$.
\end{theorem}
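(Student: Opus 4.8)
The plan is to derive both statements from the standard machinery for exponential return (and hitting) time statistics in rapidly mixing systems. The billiard map $(\bar M,\bar T,\bar\mu)$ admits a Young tower with exponentially small tails, hence enjoys exponential decay of correlations for Lipschitz (indeed H\"older) observables~\cite{young}. Two inputs are then required. The first is that $B(m,\eps)$ does not return to itself too fast, which is exactly what being non-sticky provides: applying Lemma~\ref{lem:density} with $D_\eps=B(m,\eps)$ --- legitimate for $\bar\mu$-a.e.\ $m$, which we may also take to be a Lebesgue density point with $\bar\mu(B(m,\eps))\asymp\eps^{2}$ since $\rho$ is a.e.\ positive and continuous --- gives $\bar\mu(\bar W_{B(m,\eps)}\le g(\eps)\mid B(m,\eps))\to0$ with $g(\eps):=\eps^{-2+\alpha}$, $\alpha\in(0,1)$, while $g(\eps)\,\bar\mu(B(m,\eps))\asymp\eps^{\alpha}\to0$, so discarding a time window of length $g(\eps)$ does not perturb the scale $1/\bar\mu(B(m,\eps))$. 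The second input is a quantitative mixing estimate for the ball: squeezing $\1_{B(m,\eps)}$ between Lipschitz functions $f^{-}_\eps\le\1_{B(m,\eps)}\le f^{+}_\eps$ that equal $1$, resp.\ $0$, off an annulus of width $\delta(\eps)\eps$ around $\partial B(m,\eps)$ and have Lipschitz norm $O(1/(\delta(\eps)\eps))$, the decay of correlations yields approximate independence of the sets $\bar T^{-n_j}B(m,\eps)$ for indices $n_j$ separated by at least $g(\eps)$, with an error combining a factor $\theta^{g(\eps)}/(\delta(\eps)\eps)$ from the correlation decay against the Lipschitz norm and a term controlled by the $\bar\mu$-measure of the annulus --- both negligible for a suitable choice of $\delta(\eps)\to0$.

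Granting these, the argument proceeds by the usual block (Poissonisation) scheme. Write $N=\lceil t/\bar\mu(B(m,\eps))\rceil$, partition $\{1,\dots,N\}$ into $K=K(\eps)\to\infty$ blocks, delete a gap of length $g(\eps)$ at the end of each block (harmless since $g(\eps)\bar\mu(B(m,\eps))\to0$ and by the short-return estimate), bound the contribution of two or more visits inside one block using Lemma~\ref{lem:density} for close pairs and the mixing estimate for far pairs (whose probability is $\asymp(t/K)^{2}$), and factorise over the $K$ essentially independent blocks. Letting $K(\eps)\to\infty$ slowly, one obtains $\bar\mu(\bar\mu(B(m,\eps))\bar W_{B(m,\eps)}>t)\to e^{-t}$ for every fixed $t\ge0$; since $t\mapsto\bar\mu(\bar\mu(B(m,\eps))\bar W_{B(m,\eps)}>t)$ is a (non-increasing) survival function and the limit $e^{-t}$ is continuous, P\'olya's theorem upgrades this to uniform convergence on $t\ge0$. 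The conditional statement $\bar\mu(\,\cdot\mid B(m,\eps))$ follows either by running the same block argument conditioned on $B(m,\eps)$ (whose shape is controlled by the density-point property, and whose normalisation is consistent with Kac's lemma forcing conditional mean $1$), or by invoking the general relation between the limiting laws of return and hitting times, of which $1-e^{-t}$ is the common fixed point (see~\cite{abagal,colgalsch2}).

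The main obstacle lies in the mixing step and its bookkeeping: one must control the decay-of-correlations error terms when the test object is the indicator of a genuine ball in a system with singularities, and, more importantly, keep all such errors negligible after summation over the $\asymp K(\eps)$ blocks and the $\asymp\eps^{-2}$ iterates involved. This forces a careful coupling of the three scales $\eps\to0$, $g(\eps)=\eps^{-2+\alpha}$ and $K(\eps)\to\infty$: the geometric gain $\theta^{g(\eps)}$ must defeat the polynomial losses $1/(\delta(\eps)\eps)$ and $K(\eps)$ coming from the Lipschitz approximation and the block decomposition, while the annular corrections near $\partial B(m,\eps)$ and the interaction with the singularity set of $\bar T$ must be absorbed. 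The fact that $g(\eps)=\eps^{-2+\alpha}$ is already built into the non-sticky property (Lemma~\ref{lem:density}) is precisely what makes this balance achievable.
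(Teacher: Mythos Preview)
Your approach is correct in spirit but takes a different route from the paper's proof. The paper does not run a block/Poissonisation argument at all. Instead, it shows directly that
\[
\sup_n \left|\bar\mu(\bar W_A>n\mid A)-\bar\mu(\bar W_A>n)\right| = o_\eps(1),
\]
and then invokes Theorem~2.1 of~\cite{hsv}, which says that this single estimate forces both the return and the hitting law to exist and to be exponential. To prove the estimate, the paper uses a single gap $g$ that is only \emph{logarithmic} in $\eps$ (chosen so that $\theta^{g-2k}\approx\eps^3$ with $\delta^k\approx\eps^3$), compares $\bar\mu(\bar W_A>n\mid A)$ with $\bar\mu(\bar W_A\circ\bar T^g>n-g\mid A)$ via the non-sticky property (which easily absorbs a logarithmic gap), and then decorrelates $A$ from $\{\bar W_A>n-g\}$ by one application of mixing. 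A second difference is the approximation of $\1_{B(m,\eps)}$: rather than Lipschitz sandwiching, the paper approximates both $A$ and the tail set $E=\{\bar W_A>n-g\}$ by unions of cylinders of $\xi_{-k}^k$ (resp.\ $\xi_{-k-j}^{k+j}$) using Lemma~\ref{lem:bord}, and applies the cylinder version of Theorem~\ref{thm:young}, which gives a clean bound $C\theta^{g-2k}$ with no Lipschitz-norm blowup.

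Your scheme would also go through and is essentially the machinery that underlies~\cite{hsv} made explicit; the trade-off is that the paper's proof is much shorter and avoids the coupling of the three scales $K(\eps)$, $g(\eps)$, $\delta(\eps)$ that you rightly identify as the main bookkeeping burden, at the price of importing the black-box result from~\cite{hsv}.
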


We denote by $A^{[\eta]}$ the $\eta$-neighborhood of a set $A$.

\begin{proof}
We use an approximation by cylinders, the exponential mixing and the method developed in~\cite{hsv} for exponential return times and entrance times. 
We write $A=B(m,\eps)$ for convenience.
According to Theorem~2.1 in~\cite{hsv}, it suffices to show that 
\[
\sup_n \left|\bar\mu(\bar W_A>n|A)-\bar\mu(\bar W_A>n)\right| = o_\eps(1),
\]
since it will imply that the limiting distributions exist and are both exponential.

Let $c_3>0$ be such that $\mu(\partial A^{[\eta]})\le c_3\eta$ independently of $\eps$.
Let $k$ be an integer such that $\delta^k\approx\eps^3$. 
Let $g$ be an integer such that $\theta^{g-2k}\approx \eps^3$, where $\theta$ is
the constant appearing in Theorem~\ref{thm:young}.

If $m$ is a non-sticky point, observing that $g$ is logarithmic in $\eps$, we have for any integer $n$,
\[
\left|\bar\mu(\bar W_A>n|A)-\bar\mu(\bar W_A\circ \bar T^g>n-g|A)\right|\le \bar\mu(\bar W_A\le g|A)=o_\eps(1).
\]
Set $E=\{\bar W_A>n-g\}$.
We approach $A$ and $E$ by a union of cylinder sets:

Let $A'$ be the union of all the cylinders (see Appendix~\ref{sec:pf} for the precise definition) $Z\in\xi_{-k}^k$ such that $Z\subset A$.
We have $A'\subset A$ and $A\setminus A'\subset \partial A^{[c_0\delta^k]}$ by Lemma~\ref{lem:bord}. Thus we get $\bar\mu(A\setminus A')\le c_3 c_0\delta^k$.

Let
\[
E'=\bigcap_{j=1}^{n-g}\bar T^{-j}(\cup_{Z\in\xi_{-k-j}^{k+j},Z\cap A\neq\emptyset}Z)^c.
\]
We have $E'\subset E$ and by Lemma~\ref{lem:bord} again
\[
E\setminus E'\subset (\partial A)^{[c_0\delta^k]}\cup \bigcup_{j=1}^{n-g} \bar T^{-j}(\partial A)^{[c_0\delta^{k+j}]}.
\]
Thus by  the invariance of $\bar\mu$ we get
$\bar\mu(E\setminus E')\le c_3 c_0\frac{\delta^k}
{1-\delta}$.
Using the decay of correlations (for cylinders, see Theorem~\ref{thm:young} in Appendix~\ref{sec:pf}) we get that
\[
\left|\bar\mu(A'\cap \bar T^{-g}E')-\bar\mu(A')\bar\mu(E')\right|\le C\theta^{g-2k}=o(\bar\mu(A)).
\]
Furthermore, 
\[
\left|\bar\mu(\bar W_A>n)- \bar\mu(E)\right|\le \bar\mu(\bar W_A\le g) \le g\bar\mu(A) = o(1).
\]
Putting together all these estimates gives 
\[
\left|\bar\mu(\bar W_A>n|A)-\bar\mu(\bar W_A>n)\right| = o(1),
\]
uniformly in $n\in \NN$.
\end{proof}

Next, using the mixing property again we can condition on a smaller set and still get the same limiting law.
\begin{proposition}\label{pro:prop11bar}
For any $m\in\mathcal{NS}$ there exists a function $f_m$
such that $\lim_{\varepsilon\rightarrow 0}
f_m(\varepsilon)=0$ and such that
the following holds:

For any $\eps>0$ and any balls $D_\eps$, $A_\eps$ of $\bar M$ such that

(i) $m\in D_\eps\subset A_\eps=B(m,\eps)$,

(ii) $\bar\mu(D_\eps)\ge \eps^{2.25}$,

we have for any $n$
\[
\left|\bar\mu(\bar W_{A_\eps}(\cdot)>n|D_\eps) - e^{-n\bar\mu(A_\eps)}\right|\le f_m(\varepsilon).
\]
\end{proposition}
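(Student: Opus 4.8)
The plan is to bootstrap from Theorem~\ref{thm:billiardmap}, which already gives the exponential law when conditioning on the full ball $A_\eps=B(m,\eps)$, down to conditioning on the smaller ball $D_\eps$, using the same cylinder-approximation and decay-of-correlations machinery as in that proof. First I would fix $m\in\mathcal{NS}$ and use the non-sticky property (Lemma~\ref{lem:density}): since $D_\eps$ is a ball of radius $\ge c\,\bar\mu(D_\eps)^{1/2}\ge c\,\eps^{1.125}$ centered near $m$ and contained in $B(m,\eps)$, it satisfies the hypotheses of Lemma~\ref{lem:density} (with $\diam D_\eps\le 2\eps$ and $\bar\mu(D_\eps)\ge \eps^{2.25}\ge c_1(\diam D_\eps)^2$ for $\eps$ small, after adjusting constants), so $\bar\mu(\bar W_{A_\eps}\le \eps^{-2+\alpha}\mid D_\eps)\to0$ for any $\alpha>0$. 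In particular, with $g$ chosen logarithmic in $\eps$ as in the previous proof, $\bar\mu(\bar W_{A_\eps}\le g\mid D_\eps)=o_\eps(1)$, which lets me replace $\bar W_{A_\eps}>n$ by $\bar W_{A_\eps}\circ\bar T^g>n-g$ modulo an error tending to $0$.

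Next I would run the cylinder approximation verbatim: approximate $A_\eps$ from inside by $A'$, a union of cylinders in $\xi_{-k}^k$ with $\delta^k\approx\eps^3$, and approximate $D_\eps$ similarly from inside by a union of cylinders $D'$ with $\bar\mu(D_\eps\setminus D')\le c_3c_0\delta^k$; crucially, since $\bar\mu(D_\eps)\ge\eps^{2.25}$ while $\delta^k\approx\eps^3$, the relative error $\bar\mu(D_\eps\setminus D')/\bar\mu(D_\eps)=O(\eps^{0.75})=o_\eps(1)$. Likewise approximate the event $E=\{\bar W_{A_\eps}>n-g\}$ by a union $E'$ of cylinders with $\bar\mu(E\setminus E')\le c_3c_0\delta^k/(1-\delta)$, again negligible relative to $\bar\mu(D_\eps)$. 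Then the exponential decay of correlations for cylinders (Theorem~\ref{thm:young}) applied to $D'$ and $\bar T^{-g}E'$ gives
\[
\left|\bar\mu(D'\cap\bar T^{-g}E')-\bar\mu(D')\bar\mu(E')\right|\le C\theta^{g-2k}=o(\bar\mu(A_\eps))=o(\bar\mu(D_\eps)),
\]
the last equality because $\bar\mu(A_\eps)\asymp\eps^2$ and $\bar\mu(D_\eps)\ge\eps^{2.25}$ — wait, this is where care is needed: $o(\eps^2)$ need not be $o(\eps^{2.25})$, so I must instead choose $g$ slightly larger, namely with $\theta^{g-2k}\approx\eps^{3}$ (still logarithmic in $\eps$), so that the correlation error is $o(\eps^3)=o(\bar\mu(D_\eps))$. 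Dividing through by $\bar\mu(D_\eps)$ then yields $\bar\mu(\bar W_{A_\eps}>n-g\mid D_\eps)=\bar\mu(E')+o_\eps(1)=\bar\mu(\bar W_{A_\eps}>n-g)+o_\eps(1)$, and combining with the already-established $\bar\mu(\bar W_{A_\eps}>n)=e^{-n\bar\mu(A_\eps)}+o_\eps(1)$ from Theorem~\ref{thm:billiardmap} (and absorbing the shift by $g$, which costs at most $g\bar\mu(A_\eps)=o_\eps(1)$ in the exponent) gives the claim, with $f_m(\eps)$ the sum of all these error terms.

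The main obstacle is bookkeeping the competing scales: the inner-cylinder scale $\delta^k\approx\eps^3$, the conditioning-set mass $\eps^{2.25}$, the ball mass $\asymp\eps^2$, and the mixing lag $g$ — one must verify that every error term, after division by $\bar\mu(D_\eps)\ge\eps^{2.25}$, still goes to zero and that $g$ can simultaneously be taken logarithmic in $\eps$ (so that $\bar\mu(\bar W_{A_\eps}\le g\mid D_\eps)\to0$ via non-stickiness) while making $\theta^{g-2k}=o(\eps^3)$. Since $\theta<1$ is fixed and $k\asymp\log(1/\eps)$, choosing $g\asymp C\log(1/\eps)$ with $C$ large enough handles this, and the exponent $2.25$ in hypothesis~(ii) is comfortably below $3$, leaving room. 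The uniformity in $n$ is automatic because all error bounds are independent of $n$, exactly as in the proof of Theorem~\ref{thm:billiardmap}.
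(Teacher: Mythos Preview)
Your proposal is correct and follows essentially the same approach as the paper: approximate $D_\eps$ and $E=\{\bar W_{A_\eps}>n-g\}$ from the inside by unions of cylinders at scale $\delta^k\approx\eps^3$, use non-stickiness to absorb the shift by the logarithmic gap $g$, apply the exponential decay of correlations for cylinders (Theorem~\ref{thm:young}) to decouple $D'$ from $\bar T^{-g}E'$, and then invoke Theorem~\ref{thm:billiardmap}. Your observation that the scales line up --- $\theta^{g-2k}\approx\eps^3=o(\eps^{2.25})=o(\bar\mu(D_\eps))$ --- is exactly the point, and the paper indeed uses ``the same $g$'' as in the preceding proof.

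One small correction: your justification that $D_\eps$ satisfies the hypotheses of Lemma~\ref{lem:density} is garbled. The inequality $\eps^{2.25}\ge c_1(\diam D_\eps)^2$ is false for small $\eps$ (since $\diam D_\eps$ can be as large as $2\eps$). The right argument is simply that $D_\eps$ is a \emph{ball} contained in $B(m,\eps)$, and for $m$ away from $R_0$ the density $\rho$ is bounded below on $B(m,\eps)$ for $\eps$ small, so $\bar\mu(D_\eps)\ge c_1(\diam D_\eps)^2$ directly; the hypothesis $\bar\mu(D_\eps)\ge\eps^{2.25}$ is used only to control the relative errors after dividing by $\bar\mu(D_\eps)$.
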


\begin{proof}
We approximate the sets $D$ and $E=\{\bar W_A>n\}$ from the inside by sets $D'$ and $E'$ as we approximated the sets $A$ and $E$ in the proof of Theorem~\ref{thm:billiardmap}. 
With the same $g$ we get 
\[
\left|\bar\mu(\bar W_A>n|D)-\bar\mu(\bar W_A\circ\bar T^g>n-g|D)\right|\le \bar\mu(\bar W_A\le g|D)=o(1).
\]
for non-sticky points.
Using the exponential decay of correlations for cylinders given by Theorem~\ref{thm:young} we get that 
\[
\begin{split}
\bar\mu(\bar W_A\circ\bar T^g>n-g|D)
&=\bar\mu(\bar W_A\circ\bar T^g>n-g)+o(1)\\
&=e^{-n\bar\mu(A)}+o(1)
\end{split}
\]
by Theorem~\ref{thm:billiardmap}.
\end{proof}

The following result of independent interest will not be used in the sequel an can be derived from Proposition~\ref{pro:prop11bar} as Proposition~\ref{pro:fluctext} would be derived from Proposition~\ref{pro:condflucext}. Therefore we omit its proof.

\begin{proposition} 
The random variable $4\eps^2\rho(\cdot)\bar W_\eps(\cdot)$ converges, in the strong distribution sense, to the exponential law with parameter one.

The random variable $\eps^2\bar W_\eps(\cdot)$ converges, under the law of $\bar\mu$, to a random variable $Y$ which is a continuous mixture of exponentials. More precisely $Y$ has distribution 
\[
\PP(Y>t)=\int_{\bar M}e^{-4t\rho}d\bar\mu.
\]
\end{proposition}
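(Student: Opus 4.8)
The plan is to deduce this proposition from Proposition~\ref{pro:prop11bar} by integrating the exponential law over a suitable family of balls, in exactly the way that Proposition~\ref{pro:fluctext} is later obtained from Proposition~\ref{pro:condflucext}. The point is that the uniform estimate in Proposition~\ref{pro:prop11bar} (valid for all non-sticky $m$, all $n$, and all balls $D_\eps\subseteq B(m,\eps)$ of measure at least $\eps^{2.25}$) already gives, for a.e.\ $m$ and every fixed $t\ge0$,
\[
\bar\mu\bigl(\eps^2\bar W_{B(m,\eps)}>t\,|\,B(m,\eps)\bigr)\longrightarrow e^{-4t\rho(m)}
\]
by taking $D_\eps=A_\eps=B(m,\eps)$, $n=\lfloor t\eps^{-2}\rfloor$, and using $\bar\mu(B(m,\eps))\sim 4\rho(m)\eps^2$ as $\eps\to0$ (the ball is essentially a Euclidean disk of radius $\eps$ and the density is continuous, equal to $\rho(m)$ at the center). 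The replacement of $\bar W_\eps$ by $\bar W_{B(m,\eps)}$ and the replacement of the conditioned measure by $\bar\mu$ itself both need to be justified — see below.

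First I would set up the reduction carefully. For the first statement, one writes, for a probability $\PP\ll\mathrm{Leb}$ with density $h$,
\[
\PP\bigl(4\eps^2\rho\,\bar W_\eps>t\bigr)=\int_{\bar M} \bar\mu\bigl(4\eps^2\rho(y)\bar W_\eps(y)>t\bigr)\,\text{-ish},
\]
but more precisely one disintegrates over the center: the natural object is $\int \psi(m)\,\bar\mu\bigl(\bar W_{B(m,\eps)}>t/(4\eps^2\rho(m))\,|\,B(m,\eps)\bigr)\,d\bar\mu(m)$ for a test function $\psi$, and one compares this Fubini-type average against the genuine distribution of $4\eps^2\rho\,\bar W_\eps$ under $\PP$. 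The standard device (as in \cite{hsv} and used throughout this circle of ideas) is that $\bar\mu(A>t\mid A)$ for $A=B(m,\eps)$ controls $\PP$-probabilities after a change of base point, using that $\bar W_\eps(y)\approx \bar W_{B(y,\eps)}(y)$ up to the discrepancy between $d(\bar T^ny,y)<\eps$ and $\bar T^ny\in B(y,\eps)$ — these coincide. The passage from $\bar\mu(\cdot\mid B(m,\eps))$ to $\PP$ uses the absolute continuity of $\PP$ and a density-point / Lebesgue differentiation argument, exactly the non-sticky mechanism of Lemma~\ref{lem:density}: for $\bar\mu$-a.e.\ $m$, averaging the conditional law over a shrinking ball around $m$ against $h\,d\mathrm{Leb}$ returns $h(m)$ times the limit law, and dominated convergence finishes it. This yields convergence in the strong distribution sense to $\mathrm{Exp}(1)$, since the $\PP$-average of $e^{-t}$ is $e^{-t}$.

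For the second statement the mixing has already been exploited: we do \emph{not} renormalize by the local density, so $\eps^2\bar W_\eps(y)>t$ is equivalent to $\bar W_{B(y,\eps)}(y)>t\eps^{-2}$, whose conditional probability tends to $e^{-4t\rho(y)}$. Integrating against $\bar\mu$ (which is the measure specified in the statement), dominated convergence gives $\PP(Y>t)=\int_{\bar M}e^{-4t\rho}\,d\bar\mu$. I expect the main obstacle to be the bookkeeping in the base-point change: one must show that integrating the conditional law $\bar\mu(\,\cdot\mid B(m,\eps))$ over $m$ (with respect to $\bar\mu$ or to $\PP$) genuinely reproduces the unconditioned distribution up to $o_\eps(1)$, uniformly enough to pass to the limit — this is where the uniformity of $f_m$ over the relevant balls in Proposition~\ref{pro:prop11bar} and the non-sticky property are both essential, and where one has to be slightly careful that the exceptional null set does not depend on $t$ (handle a countable dense set of $t$ and use monotonicity of $t\mapsto\PP(Y>t)$). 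Since this is precisely the derivation that the authors defer to the analogous Proposition~\ref{pro:fluctext}/\ref{pro:condflucext} pair, I would simply carry out that template here.
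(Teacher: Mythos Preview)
Your proposal is correct and matches the paper's own treatment: the paper omits the proof entirely, stating only that it ``can be derived from Proposition~\ref{pro:prop11bar} as Proposition~\ref{pro:fluctext} would be derived from Proposition~\ref{pro:condflucext}'', i.e.\ by running the partition-and-sandwich template of Section~\ref{sec:proofmain} with Proposition~\ref{pro:prop11bar} in place of Proposition~\ref{pro:condflucext}. Your identification of the key asymptotic $\bar\mu(B(m,\eps))\sim 4\rho(m)\eps^2$ and your deferral to that template are exactly right; the concrete mechanism there (partition into cells $D_i$ of radius $\nu_\eps=\eps^{5/4}$, sandwich $\bar W_\eps$ between $\bar W_{B(m_i,\eps\pm\nu_\eps)}$, apply Proposition~\ref{pro:prop11bar} with $D_\eps=D_i$ and $A_\eps=B(m_i,\eps\pm\nu_\eps)$, then sum using uniform continuity of the density $h$) is a bit more explicit than your density-point sketch, but it is the same argument.
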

\section{Recurrence for the extended billiard map}\label{sec:bilext}

Recall that the extended billiard map $(M,T)$ preserves the $\sigma$-finite
measure $\mu$ equivalent to the Lebesgue measure on $M$, which is the image of the Lebesgue measure on $Q\times S^1$, whose density is equal to $\cos\varphi$. 
Note that
\begin{equation}\label{eq:const}
\mu|_{\bar M} = 2\Gamma \bar\mu.
\end{equation}

\subsection{Preliminary results on the extended billiard map}

We will use the following extension of Sz\'asz
and Varj\'u's local limit theorem \cite{SV}. For simplicity we use the notation 
$\bar\mu(A_1;\ldots;A_n)=\bar\mu(A_1\cap\cdots\cap A_n)$.
\begin{proposition}\label{pro:cullt}
Let $p>1$. There exists $c>0$ such that, 
for any $k\ge 1$, if $A\subset\bar M$ is a 
union of components of $\xi_{-k}^{k}$ and $B\subset \bar M$ is a union of $\xi_{-k}^\infty$ then for any $n > 2k$ and $\ell\in\ZZ^2$
\[
\left|
\bar\mu(A\cap \{S_n\kappa=\ell\}\cap \bar T^{-n}(B))
-\frac{\beta e^{-\frac1{2(n-2k)} (\Sigma^2)^{-1}\ell\cdot\ell}}{(n-2k)}\bar\mu(A)\bar\mu(B)\right|
\le \frac{ck\bar\mu(B)^{\frac1p}}{(n-2k)^\frac32}
\]
where $\beta=\frac{1}{2\pi\sqrt{\det \Sigma^2}}$.
\end{proposition}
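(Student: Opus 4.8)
The plan is to deduce Proposition~\ref{pro:cullt} from the local limit theorem of Sz\'asz and Varj\'u~\cite{SV} for the billiard map $(\bar T,\bar\mu)$ by incorporating the two extra ingredients: conditioning on cylinder sets $A$ and $B$, and the exponential decay of correlations for cylinders (Theorem~\ref{thm:young}). The starting point is the LLT in the form
\[
\Bigl|\bar\mu(\{S_m\kappa=\ell\}\cap \bar T^{-m}B)-\frac{\beta e^{-\frac1{2m}(\Sigma^2)^{-1}\ell\cdot\ell}}{m}\bar\mu(B)\Bigr|\le \frac{c'\bar\mu(B)^{1/p}}{m^{3/2}},
\]
valid uniformly in $\ell$, $B$, together with its version with an extra test set in front; the $L^p$-type error term with the $\bar\mu(B)^{1/p}$ factor is exactly what Sz\'asz--Varj\'u provide and is what makes the right-hand side of the Proposition meaningful when $\bar\mu(B)$ is small.

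The key steps, in order, would be: (1) Decompose the event $A\cap\{S_n\kappa=\ell\}\cap\bar T^{-n}B$ according to the cell displacement $S_k\kappa=j$ accumulated in the first $k$ steps and the displacement $S_n\kappa-S_{n-k}\kappa\circ\bar T^{k}=\ell-j-j'$ accumulated in the last $k$ steps, so that the middle block of length $n-2k$ carries a displacement $j'$ and begins and ends on cylinder boundaries aligned with $A$ (resp.\ $\bar T^{-n}B$). Since $A\in\xi_{-k}^k$ and $B\in\xi_{-k}^\infty$, the set $A$ is measurable with respect to the past/future up to time $k$ and $\bar T^{-k}$ of the relevant part of $B$ is measurable from time $-k$ onwards; this is the structural reason we must pay the price $2k$ in the shift of the time index. (2) Apply the (conditional) LLT to the middle block of length $m=n-2k$ with the set $B$ replaced by $\bar T^{k}B$-type sets, getting the Gaussian main term $\frac{\beta e^{-\frac1{2(n-2k)}(\Sigma^2)^{-1}j'\cdot j'}}{n-2k}$ times $\bar\mu$ of the conditioning sets, plus the $O(k\bar\mu(B)^{1/p}(n-2k)^{-3/2})$ error (the factor $k$ accumulating from summing the finitely-many-per-step displacement contributions, finite horizon bounding $\kappa$). (3) Replace $e^{-\frac1{2(n-2k)}(\Sigma^2)^{-1}j'\cdot j'}$ by $e^{-\frac1{2(n-2k)}(\Sigma^2)^{-1}\ell\cdot\ell}$ at the cost of $O((n-2k)^{-3/2})$ since $|\ell-j'|$ is bounded by $2k\|\kappa\|_\infty$ and the function $x\mapsto e^{-x^2/(2m)}/m$ has derivative $O(m^{-3/2})$ on the relevant range, then resum over $j,j'$ using that the marginal sums telescope back to $\bar\mu(A)$ and $\bar\mu(B)$. (4) Control the mismatch between the resummed quantity and $\bar\mu(A)\bar\mu(B)$ times the main term using exponential decay of correlations for cylinders, Theorem~\ref{thm:young}, which contributes an error of order $\theta^{\,n-2k}$, negligible compared to $(n-2k)^{-3/2}$ for $n$ not too close to $2k$.

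The main obstacle I expect is the bookkeeping in step (1)–(3): one has to set up the right conditional version of the Sz\'asz--Varj\'u LLT — it is stated for $\bar\mu$, and one needs it with a cylinder set in front and a cylinder (or $\xi_{-k}^\infty$-measurable) set at the end — and then track the $k$-dependence of the error carefully so that the final bound reads $ck\bar\mu(B)^{1/p}(n-2k)^{-3/2}$ rather than something worse like $k^2$ or $e^{Ck}$. The finite horizon hypothesis is essential here because it makes $\kappa$ bounded, so the number of displacement values $j,j'$ to sum over in the boundary blocks is $O(k^2)$-many but each is handled by a single application of the LLT with a constant-factor loss; combined with Hölder/boundedness this yields the linear-in-$k$ error. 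Everything else — the Gaussian perturbation estimate of step (3) and the decorrelation estimate of step (4) — is routine given the tools already cited in the excerpt.
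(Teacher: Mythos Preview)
Your route diverges substantially from the paper's, and in fact has a genuine gap at the crucial point.

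The paper does \emph{not} bootstrap from an unconditional local limit theorem.  Instead it lifts everything to the Young tower and works directly with the perturbed transfer operators $P_u(\cdot)=P(e^{iu\cdot\hat\kappa}\,\cdot)$.  Writing $C_n(A,B,\ell)=\bar\mu(A;S_n\kappa=\ell;\bar T^{-n}B)$, Fourier inversion gives
\[
C_n(A,B,\ell)=\frac{1}{(2\pi)^2}\int_{[-\pi,\pi]^2}e^{-iu\cdot\ell}\int_{\hat M}P_u^k\bigl(1_{\hat B}\,P_u^{n-2k}P_u^kP^k(1_{\hat A})\bigr)\,d\hat\mu\,du,
\]
and the spectral decomposition $P_u^{n-2k}=\lambda_u^{n-2k}\Pi_u+N_u^{n-2k}$ of Proposition~\ref{pro:pertu} is applied \emph{in the middle}, while the boundary pieces are handled by: (a) the Markov property of the tower, which makes $\|P_u^kP^k(1_{\hat A})\|_{\V}=O(1)$ uniformly in $A,k$; (b) the bound $\||P_u^k-P^k\||_{L^1\to L^1}\le k|u|\,\|\kappa\|_\infty$, which produces the single factor $k$ in the error after integrating $|u|\,|\lambda_u|^{n-2k}$; and (c) H\"older with $\|\cdot\|\ge\|\cdot\|_{L^q}$, which is exactly where the factor $\bar\mu(B)^{1/p}$ originates.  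Theorem~\ref{thm:young} is \emph{not} used in this proof.

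The problem with your plan is step~(2).  After you decompose on $\{S_k\kappa=j\}$, the ``middle block'' of length $n-2k$ is \emph{not} distributed according to $\bar\mu$: the billiard map is not Markov, and conditioning on $A\cap\{S_k\kappa=j\}$ (which depends on times $-k,\dots,k$) changes the law of $(\bar T^k x,\dots,\bar T^{n-k}x)$.  The unconditional Sz\'asz--Varj\'u theorem therefore does not apply to it.  What you would actually need is a local limit theorem that starts from an arbitrary initial density in the Banach space $\V$ (namely the normalized $P^k(1_{\hat A})$), with error controlled by its $\V$-norm --- and establishing that is precisely the spectral computation the paper performs.  Trying instead to decouple $A$ from the middle block by Theorem~\ref{thm:young} cannot work as stated, since there is no time gap between the two in your decomposition; inserting a gap would degrade the error from $ck(n-2k)^{-3/2}$ to something involving both the gap length and the mixing rate, and would not yield the clean bound of the Proposition.

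In short: the conditioning on $A$ is not a matter of bookkeeping --- it is the whole difficulty, and it is resolved not combinatorially but through the operator identity $P^k(1_{\hat A})\in\V$ with uniformly bounded norm on the tower.
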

The proof of Proposition~\ref{pro:cullt} is in Appendix~\ref{sec:cullt}.

\begin{proposition}\label{pro:upperbound}
Let $c_1,c_2,c_3$ and $c_4$ be some positive 
constants. For any $m\in\mathcal{NS}$ there exists a function $f_m$ such that $\lim_{\varepsilon\rightarrow 0}
f_m(\varepsilon)=0$ and such that the following holds:

For any $\eps>0$ and any subsets $D_\eps$, $A_\eps$ of $\bar M$ such that

(i) $m\in D_\eps\subset A_\eps$,

(ii) $c_1\eps^2\le\bar\mu(A_\eps)$ and $A_\eps\subset B(m,c_2\eps)$,

(iii) for any $\eta>0$, $\bar\mu( \partial A_\eps^{[\eta]} ) \le c_3\eta$, and also 
$\bar\mu( \partial D_\eps^{[\eta]} ) \le c_3\eta$,

(iv) $\bar\mu (D_\eps)\ge c_1(\diam(D_\varepsilon))^2$ 
and $\bar\mu(D_\eps)\ge c_4\eps^{2.25}$,

uniformly in $N\in(e^{\log^2\eps},e^{\frac{1}{\eps^{2.5}}})$ we have
\[
\bar\mu(W_{A_\eps}(\cdot)>N|A_\eps) = \frac{1+o_\eps(1)}{1+\log(N)\bar\mu(A_\eps)\beta}
\]
and
\[
\bar\mu(W_{A_\eps}(\cdot)>N|D_\eps) = \frac{1}{1+\log(N)\bar\mu(A_\eps)\beta}+o_\eps(1)
\]
where the error terms $o_\eps(1)$ is bounded by $f_m(\varepsilon)$.
\end{proposition}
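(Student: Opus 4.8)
The plan is to reduce the statement about the \emph{extended} billiard map $(M,T)$, whose recurrence is governed by the local limit theorem, to the exponential return-time statistics for the \emph{quotient} billiard map $(\bar M,\bar T)$ established in Proposition~\ref{pro:prop11bar}. The point is that for $T^n m$ to land in $A_\eps\subset\bar M$ starting from $m\in\bar M$, two independent-looking events must occur: the cell shift $S_n\kappa$ must vanish (so that we are back in the cell $\ell=0$), and the projected orbit $\bar T^n\bar m$ must lie in $A_\eps$. Heuristically, $W_{A_\eps}>N$ means that for every $n\le N$ either $\bar T^n\bar m\notin A_\eps$ or $S_n\kappa\ne0$; since the chance of being in $A_\eps$ at time $n$ is $\bar\mu(A_\eps)$ and, by the local limit theorem, the chance that $S_n\kappa=0$ is $\sim\beta/n$ (using that $\Sigma^2$ enters through its determinant and the exponential factor is $1$ at $\ell=0$), the expected number of genuine returns up to time $N$ is $\sim\bar\mu(A_\eps)\beta\sum_{n\le N}\frac1n\sim\bar\mu(A_\eps)\beta\log N$. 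A Poisson/exponential heuristic then predicts $\bar\mu(W_{A_\eps}>N)\approx 1/(1+\beta\bar\mu(A_\eps)\log N)$.

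To make this rigorous I would proceed as follows. First, decompose according to the last visit to the cell $\ell=0$ before time $N$, or better, use an inclusion–exclusion / renewal-type identity: write the event $\{W_{A_\eps}>N\}$ in terms of the successive times $n$ with $S_n\kappa=0$, and condition on those. More concretely, for a fixed block length $L$ (to be chosen polynomially small in $\eps$, compatible with the window $N\in(e^{\log^2\eps},e^{\eps^{-2.5}})$), partition $\{1,\dots,N\}$ into consecutive blocks and estimate the probability of avoiding $A_\eps$-with-zero-shift on each block using Proposition~\ref{pro:cullt} with $A=A_\eps'$, $B=E'$ the cylinder approximations from the inside (exactly as in the proofs of Theorem~\ref{thm:billiardmap} and Proposition~\ref{pro:prop11bar}), controlling the boundary losses by hypothesis (iii) and using (iv) to absorb the $\bar\mu(B)^{1/p}$ error. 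The exponential law for $\bar W_{A_\eps}$ from Proposition~\ref{pro:prop11bar} is used to evaluate the within-block contribution: conditioned on returning to the cell at the start of a block, the further excursion inside the cell before hitting $A_\eps$ is asymptotically exponential with parameter $\bar\mu(A_\eps)$, while the number of cell-returns in $[0,N]$ has, by the local limit theorem, the logarithmic mean $\beta\log N$ computed above. Summing the (approximately geometric) block probabilities then yields the harmonic-sum $\sum_{n\le N}\beta\bar\mu(A_\eps)/n$ in the exponent, whose exponential is precisely $1/(1+\beta\bar\mu(A_\eps)\log N)$ up to $o_\eps(1)$; finally, the second displayed estimate (conditioning on $D_\eps$ instead of $A_\eps$) follows from the first by the same mixing argument used in Proposition~\ref{pro:prop11bar}, replacing $A_\eps$ by $D_\eps$ in the first coordinate of the correlation estimate and using (iv) and non-stickiness to discard the first $g=O(\log(1/\eps))$ iterates.

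The main obstacle is the non-uniformity in $N$ over the huge range $(e^{\log^2\eps},e^{\eps^{-2.5}})$: one must show all error terms are $o_\eps(1)$ \emph{simultaneously} for all such $N$. This forces a careful bookkeeping in Proposition~\ref{pro:cullt}: the error $ck\bar\mu(B)^{1/p}(n-2k)^{-3/2}$ must be summed over the cell-return times $n$ up to $N$ and must beat the main term $\beta\bar\mu(A_\eps)(n-2k)^{-1}$; since $\sum_n n^{-3/2}$ converges this is fine per return, but one needs that the cylinder depth $k\sim\log(1/\eps)$ does not blow up and that $\bar\mu(B)^{1/p}$ stays comparable to $1$ (which is why $B=E'$ rather than a small set). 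The other delicate point, as usual for these exponential-law arguments, is decoupling successive excursions: showing that after exiting the cell $\ell=0$ the process "forgets" where it was, so that the number of returns and the in-cell hitting behavior are asymptotically independent. This is handled by the exponential decay of correlations for cylinders (Theorem~\ref{thm:young}), inserting a logarithmic gap $g$ with $\theta^{g-2k}\approx\eps^3$ between the conditioning block and the rest, exactly as in the proof of Theorem~\ref{thm:billiardmap}; the cost of each gap is $o(\bar\mu(A_\eps))$ per block, and since the relevant number of blocks is at most polylogarithmic in $N=O(e^{\eps^{-2.5}})$, i.e. at most $O(\eps^{-2.5})$, one checks this total cost is still $o_\eps(1)$ after multiplication by $\bar\mu(A_\eps)\ge c_1\eps^2$ — this arithmetic balancing of the various scales ($\eps^2$, $\eps^{2.25}$, $\eps^3$, $\eps^{-2.5}$, $\log(1/\eps)$) is where the bulk of the technical work lies.
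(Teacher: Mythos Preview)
Your proposal has a genuine gap, and it is visible already at the level of the heuristic. You claim that a Poisson/exponential approximation (summing approximately geometric block probabilities) puts the harmonic sum $\beta\bar\mu(A_\eps)\log N$ ``in the exponent'', and that the result is $1/(1+\beta\bar\mu(A_\eps)\log N)$. But $e^{-\lambda}$ and $1/(1+\lambda)$ are \emph{not} close when $\lambda$ is large, and here $\lambda=\beta\bar\mu(A_\eps)\log N$ ranges up to order $\eps^{-1/2}$ over the window $N\in(e^{\log^2\eps},e^{\eps^{-2.5}})$. A block/Poisson argument, if it could be made to work, would produce $N^{-\beta\bar\mu(A_\eps)}$, not the stated answer. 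The form $1/(1+\lambda)$ is not an exponential law: it is the Dvoretzky--Erd\H os tail for the \emph{last} return of a two-dimensional random walk, and it reflects precisely that the number of returns of $S_n\kappa$ to $0$ up to time $N$ is \emph{not} Poisson with mean $\beta\log N$.

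The paper accordingly does not reduce to Proposition~\ref{pro:prop11bar} at all. Instead it uses the last-entrance decomposition of Dvoretzky and Erd\H os: partitioning $D_\eps$ according to the last time $q\le N$ at which the orbit is in $A_\eps$ (with $S_q\kappa=0$) gives the exact identity
\[
\bar\mu(D_\eps)=\sum_{q=0}^N \bar\mu\bigl(D_\eps;\ S_q\kappa=0;\ \bar T^{-q}(A_\eps\cap\{W_{A_\eps}>N-q\})\bigr).
\]
After cylinder approximation of $D_\eps$ and of $E=A_\eps\cap\{W_{A_\eps}>N\}$ (this is where hypotheses (iii)--(iv) enter) one applies Proposition~\ref{pro:cullt} termwise, the non-stickiness of $m$ killing the very early terms, to obtain the \emph{linear} relation
\[
\bar\mu(W_{A_\eps}>N\mid D_\eps)+\beta\log(N)\,\bar\mu(A_\eps)\,\bar\mu(W_{A_\eps}>N\mid A_\eps)=1+o_\eps(1).
\]
Setting $D_\eps=A_\eps$ and solving gives the first conclusion; substituting back gives the second. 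No decoupling of excursions, no block argument, and no exponential law for the quotient map are needed. If you want to rescue your strategy, the missing idea is exactly this last-entrance identity; trying to force independence of successive cell excursions will not recover the correct $1/(1+\lambda)$ tail.
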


\begin{lemma}\label{lem:upper}
Under the hypothesis of 
Proposition~\ref{pro:upperbound}, for all
$m\in\bar M$ (even those not belonging to 
${\mathcal NS}$), we have
\[
\bar\mu(W_A>N|D)+\beta\log(N)\bar\mu(A)\bar\mu(W_A>N|A)\le1+o_\eps(1),
\]
where the error term only depends on the positive 
constants $c_i$.
\end{lemma}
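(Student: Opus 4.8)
The plan is to exploit the local limit theorem in the form of Proposition~\ref{pro:cullt} to turn the return-time statistics for the extended map $T$ into a sum, over the possible cell shifts $\ell\in\ZZ^2$, of return-time statistics for the quotient map $\bar T$. Fix $N$ in the allowed range and write $n=\lfloor\log N\rfloor$ (or a comparable integer scale). The event $\{W_A>N\}$ for the extended map decomposes according to the trajectory of the cell shift: roughly, on the cell $\ell=0$ the orbit must avoid $A$, and the excursions outside the zero cell are unconstrained. First I would introduce a cut-off: split the orbit length $N$ into blocks of some intermediate length $L$ (polynomial in $1/\eps$, logarithmic in $N$), and on each block apply Proposition~\ref{pro:cullt} with the cylinder approximations $A'\subset A$, $D'\subset D$ built exactly as in the proofs of Theorem~\ref{thm:billiardmap} and Proposition~\ref{pro:prop11bar}; the boundary hypotheses (iii) on $\partial A^{[\eta]}$, $\partial D^{[\eta]}$ control the error of these approximations, and hypothesis (iv) guarantees $\bar\mu(D)$ is not too small so the conditional errors are negligible.

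The heart of the argument is a renewal-type estimate. One shows that, up to the small errors above,
\[
\bar\mu(W_A>N\,|\,A)\ \approx\ \frac{1}{1+\beta\log(N)\bar\mu(A)}\quad\text{and}\quad \bar\mu(W_A>N\,|\,D)\ \lesssim\ \frac{1}{1+\beta\log(N)\bar\mu(A)},
\]
and then the claimed inequality
\[
\bar\mu(W_A>N\,|\,D)+\beta\log(N)\bar\mu(A)\,\bar\mu(W_A>N\,|\,A)\le 1+o_\eps(1)
\]
follows by adding. The key mechanism: every visit of the $T$-orbit to the zero cell is a visit of a $\bar T$-orbit to $\bar M$, and by the local limit theorem the number of returns to the zero cell in time $n$ is of order $\beta/(n-2k)$ per step, so of order $\beta\log n\sim$ (after the correct bookkeeping) $\beta\log N$ returns in total — each of which is an independent chance (by mixing) of order $\bar\mu(A)$ to hit $A$. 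This produces the factor $1+\beta\log(N)\bar\mu(A)$ in the denominator. The precise way I would make this rigorous is: condition on the successive return times to the zero cell, use Proposition~\ref{pro:cullt} to get that the joint law of these return times is, to leading order, that of a renewal process with the Cauchy-type tail $\sim \beta/k$, sum the geometric-like series $\sum_j (1-\bar\mu(A))^j \PP(\#\text{returns}=j)$, and check the two sided bound; for the $D$-conditioned quantity one only needs the upper bound, which is why hypothesis that $m\in\mathcal{NS}$ is \emph{not} needed here — we never use a density/non-stickiness statement, only the mixing, the LLT, and the crude bound $\bar\mu(W_A\le g\,|\,D)\le g\,\bar\mu(A)/\bar\mu(D)$, which is small by (iv).

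The main obstacle I anticipate is controlling the error in the renewal approximation uniformly over the enormous range $N\in(e^{\log^2\eps},e^{1/\eps^{2.5}})$: the number of blocks, the number of returns to the zero cell, and the exponent $k$ in Proposition~\ref{pro:cullt} all grow with $\log N$, so the accumulated error $\sim (\text{number of returns})\times ck\bar\mu(B)^{1/p}/(n-2k)^{3/2}$ and the cylinder-boundary errors $\sim c_3 c_0\delta^k$ must be shown to stay $o_\eps(1)$ after summation. This forces a careful choice of $k$ (taken $\asymp$ a suitable multiple of $\log(1/\eps)$, as in Theorem~\ref{thm:billiardmap}) balancing the polynomial-in-$1/\eps$ LLT error against the exponentially small cylinder error, and a summation-by-parts/tail estimate to see that the dominant contribution to $\bar\mu(W_A>N)$ really is the renewal series and that subleading terms are controlled by the $o_\eps(1)$ budget $f_m(\eps)$ — here depending only on the constants $c_i$, since all the estimates used are uniform over balls satisfying (i)--(iv).
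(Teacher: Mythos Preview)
Your proposal misses the central idea and, as written, reverses the logical order of the argument. The paper does \emph{not} prove the asymptotic $\bar\mu(W_A>N\mid A)\approx 1/(1+\beta\log(N)\bar\mu(A))$ first and then deduce the inequality; it is precisely the other way around. The inequality of Lemma~\ref{lem:upper} is the easy direction, proved directly and without any non-stickiness assumption; only afterwards is it combined with the matching lower bound (Lemma~\ref{lem:lower}, which \emph{does} need $m\in\mathcal{NS}$ to control short returns) to produce the asymptotic of Proposition~\ref{pro:upperbound}. Your plan to establish the asymptotic via a renewal/block decomposition therefore imports exactly the difficulty you are trying to avoid, and your justification for why $\mathcal{NS}$ is not needed---the ``crude bound'' $\bar\mu(W_A\le g\mid D)\le g\,\bar\mu(A)/\bar\mu(D)$---is useless here, since under hypothesis~(iv) one only has $\bar\mu(D)\ge c_4\eps^{2.25}$ while $\bar\mu(A)\asymp\eps^2$, so $\bar\mu(A)/\bar\mu(D)$ can blow up like $\eps^{-0.25}$.

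The device you are missing is the Dvoretzky--Erd\H{o}s \emph{last entrance} decomposition. One partitions $D$ according to the last time $q\in\{0,\ldots,N\}$ at which the extended orbit visits $A$, obtaining the exact identity
\[
\bar\mu(D)=\sum_{q=0}^N\bar\mu\bigl(D;\,S_q\kappa=0;\,\bar T^{-q}(A\cap\{W_A>N-q\})\bigr).
\]
Since $\{W_A>N-q\}\supset\{W_A>N\}=:E/A$ (once you are at the last visit there are no further returns), this gives for free the lower bound
\[
\bar\mu(D)\ \ge\ \bar\mu(D\cap E)\ +\ \sum_{q=p_0}^N\bar\mu(D';\,S_q\kappa=0;\,\bar T^{-q}E'),
\]
with $D',E'$ the inner cylinder approximations and $p_0\approx\eps^{-4.6}$ a burn-in. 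A single application of Proposition~\ref{pro:cullt} to each summand yields $\sum_{q=p_0}^N\beta\bar\mu(D')\bar\mu(E')/(q-2k)\approx\beta\log(N)\bar\mu(D)\bar\mu(E)$, and dividing by $\bar\mu(D)$ gives the claimed inequality immediately. No blocking, no renewal process, no conditioning on successive return times to the zero cell---and, crucially, no need for $m\in\mathcal{NS}$, because one never has to bound short returns from above; the last-entrance identity produces an inequality in the right direction automatically.
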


\begin{proof}
As used by Dvoretzky and Erd\"os in \cite{DE}, 
a partition of $D$ with respect to the last entrance time $q$ into the set $A$ in the time 
interval $[0,\ldots,N]$ gives
\begin{equation}\label{eq:de}
\begin{split}
\bar\mu(D)
&=\sum_{q=0}^N \bar\mu(D ; S_q\kappa=0 ; \bar T^{-q}(A\cap \{W_A>N-q\}))\\
&\ge\sum_{q=0}^N\bar\mu(D ; S_q\kappa=0 ; \bar T^{-q}(E))
\end{split}
\end{equation}
with $E=A\cap\{W_A>N\}$.

Let $k$ be such that $\delta^k\approx\eps^3$.
We approach $D$ and $E$ by cylindrical sets:

Let $D'$ be the union of cylinders $Z\in\xi_{-k}^k$ such that $Z\subset D$.
We have $D'\subset D$ and $D\setminus D'\subset \partial D^{[c_0\delta^k]}$ by Lemma~\ref{lem:bord}, thus by the hypothesis (iii) we get $\bar\mu(D\setminus D')\le c_3 c_0\delta^k$.

Let $A'$ be the corresponding cylindrical approximation for $A$ and set
\[
E'=A'\cap(\bigcap_{j=1}^N\left[\{S_j\kappa\neq0\}\cup \bar T^{-j}(\cup_{Z\in\xi_{-k-j}^{k+j},Z\cap A\neq\emptyset}Z)^c 
\right]).
\]
We have $E'\subset E$ and by Lemma~\ref{lem:bord}
\[
E\setminus E'\subset (\partial A)^{[c_0\delta^k]}\cup \bigcup_{j=1}^N \bar T^{-j}(\partial A)^{[c_0\delta^{k+j}]}.
\]
Thus by the hypothesis (iii) and the invariance of $\bar\mu$ we get
$\bar\mu(E\setminus E')\le c_3 c_0\frac{\delta^k}
{1-\delta}$.

Set $p_0\approx\eps^{-a}$ with $a=4.6>2\times2.25$.
By~\eqref{eq:de} and the inclusions we get
\[
\bar\mu(D)\ge \bar\mu(D\cap E)+\sum_{q=p_0}^N
\bar\mu(D';S_q\kappa=0;\bar T^{-q}E').
\]
It follows from Proposition~\ref{pro:cullt} that
\[
\bar\mu(D)\ge\bar\mu(D\cap E)+\sum_{q=p_0}^N\beta \frac{\bar\mu(D')\bar\mu(E')}{q-2k}-\sum_{q=p_0}^N\frac{ck}{(q-2k)^{\frac32}}.
\]
The error term is bounded by $\frac{ck}{\sqrt{p_0-2k}}=O(\log(\eps)\eps^{a/2})\ll c_4\eps^{2.25}\le\bar\mu(D)$. Thus, since $\log p_0=o(\log N)$,
\[
\bar\mu(D\cap E)+\beta\log(N)\bar\mu(D')\bar\mu(E')\le \bar\mu(D)(1+o(1)).
\]
Therefore, using $\bar\mu(D\setminus D')\le c_3c_0\delta^k=c_3c_0\eps^3\ll c_4\eps^{2.25}-c_3c_0\eps^3\le \bar\mu(D')$, we get
\[
\bar\mu(D\cap E)+\beta\log(N)\bar\mu(D)\bar\mu(E')\le \bar\mu(D)(1+o(1)).
\]
Notice that $\bar\mu(E\setminus E')\log N\le \frac{c_3c_0}{1-\delta}\delta^k\log N=o(1)$, from which it follows that
\[
\bar\mu(D\cap E)+\beta\log(N)\bar\mu(D)\bar\mu(E)\le \bar\mu(D)(1+o(1)).
\]
A division by $\bar\mu(D)$ yields, since $E=A\cap\{W_A>N\}$
and $D\subset A$,
\[
\bar\mu(W_A>N|D)+\beta\log(N)\bar\mu(A)\bar\mu(W_A>N|A)\le1+o(1)
\]
\end{proof}

\begin{lemma}\label{lem:lower}
Under the hypotheses of Proposition~\ref{pro:upperbound} we have 
\[
\bar\mu(W_A>N|D)+\beta\log(N)\bar\mu(A)\bar\mu(W_A>N|A) = 1+o_\eps(1).
\]
\end{lemma}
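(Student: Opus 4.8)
The plan is to establish the matching lower bound to Lemma~\ref{lem:upper}, so that combining the two inequalities yields the claimed equality. The strategy is to reverse the Dvoretzky--Erd\H{o}s decomposition~\eqref{eq:de}: instead of throwing away terms to get a lower bound on $\bar\mu(D)$, I would now bound $\bar\mu(D)$ from above by controlling all the terms in the last-entrance decomposition, including the ones dropped before. First I would write, exactly as in~\eqref{eq:de},
\[
\bar\mu(D)=\sum_{q=0}^{N}\bar\mu\bigl(D;S_q\kappa=0;\bar T^{-q}(A\cap\{W_A>N-q\})\bigr),
\]
and split the sum into the range $q<p_0$ and $q\ge p_0$ with $p_0\approx\eps^{-a}$ as before. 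For the small range $q<p_0$, I would bound $\{W_A>N-q\}\subset\{W_A>N-p_0\}$ and use that $\log(N-p_0)=(1+o(1))\log N$; for the large range I would again approximate $D$, $A$, and the event $E_q=A\cap\{W_A>N-q\}$ by cylindrical sets $D'$, $A'$, $E_q'$ as in the proof of Lemma~\ref{lem:upper}, so that Proposition~\ref{pro:cullt} applies with error term summable against $\frac{ck}{(q-2k)^{3/2}}$.

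The key new ingredient is that I now need a lower bound on $\bar\mu(E_q')=\bar\mu(A'\cap\{\ldots\})$ of the form $\bar\mu(E_q')\ge \bar\mu(A)\,\bar\mu(W_A>N-q\mid A)(1+o(1))$, which is where the outside-in cylinder approximation and the bound $\bar\mu(E\setminus E')\le c_3c_0\frac{\delta^k}{1-\delta}$ from Lemma~\ref{lem:upper} come in, together with $\delta^k\approx\eps^3\ll c_1\eps^2\le\bar\mu(A)$. Similarly I need $\bar\mu(D')\ge(1-o(1))\bar\mu(D)$, which follows from hypothesis (iii) and $\bar\mu(D)\ge c_4\eps^{2.25}$. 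Plugging these into Proposition~\ref{pro:cullt} and using that $S_q\kappa=0$ contributes the factor $\beta/(q-2k)$ (the $\ell=0$ term, where $e^{-\frac1{2(n-2k)}(\Sigma^2)^{-1}\ell\cdot\ell}=1$), the tail sum becomes
\[
\sum_{q=p_0}^{N}\frac{\beta}{q-2k}\bar\mu(D)\,\bar\mu(A)\,\bar\mu(W_A>N-q\mid A)+o(\bar\mu(D)).
\]
Since the entrance-time law $\bar\mu(W_A>N-q\mid A)$ is essentially constant on the relevant scale (it varies only when $N-q$ changes by a factor, i.e. over $q$-ranges of size $\approx N$, while $\sum 1/(q-2k)\approx\log N$ is dominated by geometrically spaced scales), one can replace it by $\bar\mu(W_A>N\mid A)$ up to $o(1)$ after dividing by $\bar\mu(D)$; combined with the $q<p_0$ contribution bounded by $\bar\mu(D\cap E)$-type terms and $\sum_{q<p_0}1/(q-2k)=o(\log N)$, this gives
\[
1\ge \bar\mu(W_A>N\mid D)+\beta\log(N)\bar\mu(A)\,\bar\mu(W_A>N\mid A)+o(1),
\]
the reverse of Lemma~\ref{lem:upper}.

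I expect the main obstacle to be the bookkeeping of the $q$-dependence of $\bar\mu(W_A>N-q\mid A)$ inside the logarithmic sum: one must argue that although this factor is not literally constant, its variation is harmless because the sum $\sum 1/(q-2k)$ puts mass $\asymp\log 2$ on each dyadic block and on all but finitely many such blocks $W_A>N-q\mid A$ agrees with $W_A>N\mid A$ up to $o(1)$ — here a monotonicity argument ($q\mapsto\bar\mu(W_A>N-q\mid A)$ is nondecreasing) plus the a priori two-sided bound $0\le\bar\mu(W_A>N-q\mid A)\le\bar\mu(W_A>N-q\mid A)$ keeps everything between the value at $q=p_0$ and the value at $q=N$, and the total log-weight of the transition region is $o(\log N)$. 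A secondary technical point is checking that all the cylinder-approximation errors ($\bar\mu(D\setminus D')$, $\bar\mu(E_q\setminus E_q')$, the Proposition~\ref{pro:cullt} remainder $\frac{ck}{(q-2k)^{3/2}}$ summed from $p_0$, and the small-$q$ truncation) are each $o(\bar\mu(D))$ uniformly in $N\in(e^{\log^2\eps},e^{\eps^{-2.5}})$, which is exactly what the choices $\delta^k\approx\eps^3$, $p_0\approx\eps^{-4.6}$, and hypotheses (ii)--(iv) were engineered to guarantee, so this should go through verbatim as in Lemma~\ref{lem:upper}.
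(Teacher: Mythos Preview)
Your approach has a genuine gap at the heart of the argument. You need an \emph{upper} bound on
\[
\bar\mu(D)=\sum_{q=0}^{N}\bar\mu\bigl(D;S_q\kappa=0;\bar T^{-q}(A\cap\{W_A>N-q\})\bigr)
\]
in order to conclude $\bar\mu(W_A>N\mid D)+\beta\log(N)\bar\mu(A)\bar\mu(W_A>N\mid A)\ge 1-o(1)$ (your final display has the inequality reversed, presumably a typo). After applying Proposition~\ref{pro:cullt}, the tail sum behaves like $\beta\bar\mu(D)\bar\mu(A)\sum_{q}\frac{1}{q}\bar\mu(W_A>N-q\mid A)$, and you must show this is at most $(1+o(1))\beta\log(N)\bar\mu(D)\bar\mu(A)\bar\mu(W_A>N\mid A)$. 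But monotonicity gives $\bar\mu(W_A>N-q\mid A)\ge\bar\mu(W_A>N\mid A)$ for every $q$, the wrong direction; the trivial upper bound is only $\sum_q\frac{1}{q}\cdot 1\approx\log N$. Your ``transition region'' heuristic presumes that $\bar\mu(W_A>M\mid A)$ stays close to $\bar\mu(W_A>N\mid A)$ over most of the range $M\in[1,N]$, which is precisely the content of the lemma for $D=A$ --- so the argument is circular. Even if one feeds in Lemma~\ref{lem:upper} at each scale $M=N-q$, one only obtains $\bar\mu(W_A>N\mid A)\ge\frac{1}{1+\beta\log(N)\bar\mu(A)}-o(1)$, which is far weaker than the required $\frac{1-o(1)}{1+\beta\log(N)\bar\mu(A)}$.

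The paper avoids this entirely by a simple device: it runs the last-entrance decomposition up to the enlarged horizon $n_N:=N\log N$ rather than $N$. For $q\le m_N:=n_N-N$ one then has $n_N-q\ge N$, so $\{W_A>n_N-q\}\subset\{W_A>N\}$ and every term is bounded \emph{above} by the corresponding term with the \emph{fixed} set $E=A\cap\{W_A>N\}$; this yields directly $S_2\le\beta\log(m_N)\bar\mu(D)\bar\mu(E)(1+o(1))$. The residual block $q\in(m_N,n_N]$ is controlled crudely by replacing the event with $A$, and is harmless because $\log(n_N/m_N)=\log\frac{\log N}{\log N-1}=o(1)$. A second difference: the short-range block $q\in[1,M_\eps]$ (which cannot be discarded here) is handled via the non-sticky hypothesis of Lemma~\ref{lem:density}, which forces $M_\eps=\eps^{-2+2\alpha}$; your cutoff $p_0\approx\eps^{-4.6}$ is far too large for that lemma to apply.
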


\begin{proof}
Let $\alpha\in(0,0.25)$ and set $M_\eps=\eps^{2(-1+\alpha)}$.
We use the same decomposition as in Equation~\eqref{eq:de} again, with $n_N=N\log(N)$ and $m_N=n_N-N$:
\[
\bar\mu(D)=\sum_{q=0}^{n_N}\bar\mu(D ; S_q\kappa=0 ; 
\bar T^{-q}(A\cap\{W_A>n_N-q\})).
\]
We divide this sum into four blocks: $S_0$ is the term for $q=0$, $S_1$ is the sum for $q$ in the range $1,\ldots,M_\eps$,
$S_2$ in the range $M_\eps+1,\ldots,m_N$ and
$S_3$ in the range $m_N+1,\ldots,n_N$.

The value of $S_0$ is simply
\[
S_0=\bar\mu(D ; W_A>n_N) \le \bar\mu(D ; W_A>N).
\]
By assumption (conclusion of Lemma~\ref{lem:density}), we have
\[
S_1=\bar\mu(D ; W_A\le M_\eps)
   \le\bar\mu(D;\bar W_{B(m,c_2\eps)}\le M_\eps) 
  = o(\bar\mu(D)).
\]
When $q\le m_N$ we have $n_N-q\ge N$, therefore we have
\[
S_2\le\sum_{q=M_\eps+1}^{m_N}\bar\mu(D ; S_q\kappa=0 ; \bar T^{-q}(E))
\]
with $E=A\cap\{W_A>N\}$. Let $k$ be such that $\delta^k\approx\eps^3$.
We approximate the sets $D$ and $E$ by cylinders: let $D''$ be the union of cylinders $Z\in\xi_{-k}^k$ such that $Z\cap D\neq\emptyset$.
Let $A''$ be the corresponding enlargement for $A$ and let
\[
E''=A''\cap\bigcap_{j=1}^N\left[\{S_j\kappa\neq0\}
\cup \bar T^{-j}(\cup_{Z\in\xi_{-k-j}^{k+j},
Z\subseteq A}Z)^c\right].
\]
We have $D\subset D''$ and by Lemma~\ref{lem:bord}, $D''\setminus D\subset (\partial D)^{[c_0\delta^k]}$.
Thus by Hypothesis~(iii) we get that $\bar\mu(D''\setminus D)\le c_3c_0\delta^k$.
Similarly, $E\subset E''$ and $E''\setminus E
\subset (\partial A)^{[c_0\delta^k]}\cup
\bigcup_{j=1}^N\bar T^{-j}(\partial A)
^{[c_0\delta^{k+j}]}$.
Thus by hypothesis~(iii) we get that 
$\bar\mu(E''\setminus E)\le c_3c_0\frac{\delta^k}
{1-\delta}$ and so $\log(m_N)\bar\mu(E''\setminus E)
=o(1)$.
By Proposition~\ref{pro:cullt} with $p$ such that
$1+\frac{2}{p}>2.5$, we get
\[
\begin{split}
S_2
&\le\sum_{q=M_\eps+1}^{m_N}\bar\mu(D'' ; S_q\kappa=0 ; \bar T^{-q}(E''))\\
&\le 
\sum_{q=M_\eps+1}^{m_N} \left[\beta\frac{\bar\mu(D'')
\bar\mu(E'')}{q-2k}+\frac{ck\bar\mu(E'')^{\frac1p}}
{(q-2k)^{\frac32}}\right]\\
&\le \log(m_N)\beta\bar\mu(D'')\bar\mu(E'')+\frac{ck\bar\mu(A'')^{\frac{1}{p}}}{\sqrt{M_\eps-2k}}\\
&\le
\log(m_N)\beta\bar\mu(D)\bar\mu(E)(1+o(1))
+o(\bar\mu(D))+ O(\log(\eps)\eps^{1-\alpha}\eps^{2/p})
\end{split}
\]
The last error term is $o(\bar\mu(D))$ provided $1-\alpha+2/p>2.25$.
In addition $\log m_N\sim\log N$, hence
\[
S_2\le\beta\log(N)\bar\mu(D)\bar\mu(E)(1+o(1))+o(\bar\mu(D)).
\]
Finally, by Proposition~\ref{pro:cullt} we get
\[
\begin{split}
S_3
&\le\sum_{q=m_N}^{n_N}\bar\mu(D'' ; S_q\kappa=0 ; \bar T^{-q}A'')\\
&\le \sum_{q=m_N}^{n_N} \left[\beta\frac{\bar\mu(D'')\bar\mu(A'')}{q-2k}+\frac{ck}{(q-2k)^{\frac32}}\right]\\
&\le
\beta\log\left(\frac{n_N}{m_N}\right)\bar\mu(D)\bar\mu(A)(1+o(1))+\frac{k}{\sqrt{m_N-2k}}
\end{split}
\]
Moreover $\log(\frac{n_N}{m_N})=o(1)$,  
and the last error term is again $o(\bar\mu(D))$ since $m_N\ge N$.

We conclude that 
\[
(1+o(1))\bar\mu(D)\le \bar\mu(D\cap E)+\beta\log(N)\bar\mu(D)\bar\mu(E).
\]
A division by $\bar\mu(D)$ yields, since $E=A\cap\{W_A>N\}$ and $D\subset A$,
\[
\bar\mu(W_A>N|D)+\beta\log(N)\bar\mu(A)\bar\mu(W_A>N|A)\ge 1-o(1).
\]
The reverse inequality also holds by Lemma~\ref{lem:upper}, finishing the proof.
\end{proof}

\begin{proof}[Proof of Proposition~\ref{pro:upperbound}]
Lemma~\ref{lem:lower} with $D=A$ gives us
\begin{equation}\label{eq:D=A}
\bar\mu(W_A>N|A)=\frac{1+o(1)}{1+\beta\log(N)\bar\mu(A)}.
\end{equation}
This proves the proposition in the special case $D=A$.
We turn now to the general case.
Applying Lemma~\ref{lem:lower} again, together with~\eqref{eq:D=A} we get, 
\[
\bar\mu(W_A>N|D)+\beta\log(N)\bar\mu(A)\frac{1+o(1)}{1+\beta\log(N)\bar\mu(A)} = 1+o(1)
\] 
which proves the proposition.
\end{proof}

\subsection{Recurrence results for the extended billiard map}\label{sec:billiardmap}
\begin{proposition}\label{pro:rrextended}
The recurrence rate for the extended billiard map is given by
\[
\lim_{\eps\to0}\frac{\log\log W_\eps}{-\log\eps}=2\quad\mu\text{-a.e.}
\]
\end{proposition}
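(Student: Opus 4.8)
The plan is to sandwich $W_\eps$ between two quantities we already control: the return time $\bar W_\eps$ for the \emph{quotient} billiard map, which by Theorem~\ref{thm:rrbilliard} has recurrence rate exactly $2$, and the entrance time $W_{A_\eps}$ into a ball $A_\eps=B(m,\eps)$ of the extended map, whose conditional distribution is pinned down (over the relevant range of scales) by Proposition~\ref{pro:upperbound}. For the \textbf{lower bound} $\liminf_{\eps\to0}\frac{\log\log W_\eps}{-\log\eps}\ge 2$, the key observation is that a return of the extended map to $B(m,\eps)$ forces, in particular, the cell shift to vanish and hence a return of the quotient map to $B(\bar m,\eps)$; so $W_\eps(m)\ge \bar W_\eps(\bar m)$ is false in general, but $W_\eps(m)$ is at least the first time the quotient orbit returns $\eps$-close \emph{with} zero accumulated cell shift. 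One then argues that having $W_\eps$ only polynomially large in $1/\eps$ would, via Proposition~\ref{pro:upperbound} (or rather the cruder bound $\bar\mu(W_{A_\eps}\le N)\le \beta\log(N)\bar\mu(A_\eps)(1+o(1))$ extracted from~\eqref{eq:D=A} with $D=A_\eps$), happen with probability $o(1)$ for non-sticky $m$; a Borel--Cantelli argument along a geometric sequence $\eps_n$ then gives the a.e. lower bound.

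For the \textbf{upper bound} $\limsup_{\eps\to0}\frac{\log\log W_\eps}{-\log\eps}\le 2$, I would fix $\alpha>0$ and show that for a.e. $m$, $W_\eps(m)\le \exp(\eps^{-2-\alpha})$ for all small $\eps$. Take $N=N_\eps=\exp(\eps^{-2-\alpha})$, which lies well above the threshold $e^{\log^2\eps}$ but, after shrinking along a suitable sequence, can be kept below $e^{1/\eps^{2.5}}$; then Proposition~\ref{pro:upperbound} applied with $D_\eps=A_\eps=B(m,\eps)$ gives
\[
\bar\mu(W_{A_\eps}\le N_\eps \mid A_\eps)=1-\frac{1+o_\eps(1)}{1+\beta\log(N_\eps)\bar\mu(A_\eps)}\longrightarrow 1,
\]
since $\log(N_\eps)\bar\mu(A_\eps)\asymp \eps^{-2-\alpha}\cdot\eps^{2}\to\infty$. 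This controls the \emph{entrance} time of the extended map into the ball; to convert it into a statement about the \emph{return} time from $m$ itself one uses non-stickiness and a conditioning argument exactly as in the passage from entrance to return statistics, i.e. one conditions on a small sub-ball $D_\eps\subset A_\eps$ containing $m$ with $\bar\mu(D_\eps)\ge c_4\eps^{2.25}$ and invokes the second conclusion of Proposition~\ref{pro:upperbound}. Then a Borel--Cantelli argument along $\eps_n\to0$, together with monotonicity of $\eps\mapsto\eps^{-2-\alpha}$ and the usual interpolation between consecutive scales, upgrades this to an a.e. statement for all small $\eps$. Letting $\alpha\downarrow0$ through a countable sequence finishes the upper bound.

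The \textbf{main obstacle} is the transition between the two billiard maps: Proposition~\ref{pro:upperbound} is a statement about the extended map $(M,T,\mu)$ with its infinite measure, phrased entirely in terms of $\bar M$, $\bar\mu$, and the cell-shift cocycle $S_n\kappa$, so one must be careful that a genuine return $d(T^n m,m)<\eps$ in $M$ is equivalent to the conjunction $\{S_n\kappa=0\}\cap\{d(\bar T^n\bar m,\bar m)<\eps\}$, and that the ball $B(m,\eps)$ in $M$ projects cleanly to $B(\bar m,\eps)$ in $\bar M$ with the measure-theoretic regularity (hypotheses (ii)--(iv) of Proposition~\ref{pro:upperbound}) holding for $\bar\mu$-a.e.\ $m$; the density $\rho(\ell,i,r,\varphi)=\frac1{2\Gamma}\cos\varphi$ is bounded and bounded below away from the grazing set $\{\varphi=\pm\pi/2\}$, so outside a null set these regularity estimates are standard, but they must be stated. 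The other delicate point is keeping $N_\eps$ inside the admissible window $(e^{\log^2\eps},e^{1/\eps^{2.5}})$ of Proposition~\ref{pro:upperbound}; since we only need $\limsup\le 2$, any exponent in $(2,2.5)$ works and this is harmless, but it does mean the proposition as stated does \emph{not} directly give the sharp constant $2$ without the matching lower bound, which is why both directions are needed.
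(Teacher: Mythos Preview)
Your upper bound is essentially the paper's argument: cover $\bar M$ by balls, apply Proposition~\ref{pro:upperbound} with $D_\eps=A_\eps=B(m,\eps)$ at the scale $N\approx e^{\eps^{-2-\alpha}}$, and run Borel--Cantelli along a sparse subsequence. That part is fine.

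The lower bound, however, has two real problems. First, a scale confusion: ruling out that $W_\eps$ is ``only polynomially large in $1/\eps$'' is far too weak. The target is $\liminf_{\eps\to0}\frac{\log\log W_\eps}{-\log\eps}\ge 2$, which means you must show $W_\eps\ge \exp(\eps^{-(2-\delta)})$ eventually, not merely $W_\eps\ge \eps^{-c}$; the latter only yields $\liminf\ge 0$. (Incidentally, $W_\eps(m)\ge\bar W_\eps(\bar m)$ \emph{is} true---a return in $M$ forces the cell shift to vanish and hence a return in $\bar M$---it is just useless here, precisely because $\bar W_\eps$ is polynomial.)

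Second, and more seriously, even at the correct scale $N=\exp(\eps^{-(2-\delta)})$, Proposition~\ref{pro:upperbound} does not deliver a summable Borel--Cantelli bound. With $\log(N)\bar\mu(A_\eps)\asymp\eps^{\delta}\to0$, the conclusion reads
\[
\bar\mu(W_{A_\eps}\le N\mid A_\eps)=1-\frac{1+o_\eps(1)}{1+\beta\log(N)\bar\mu(A_\eps)}=\beta\log(N)\bar\mu(A_\eps)-o_\eps(1),
\]
and the additive $o_\eps(1)=f_m(\eps)$ is neither quantified nor uniform in $m$; you cannot extract $\bar\mu(W_{A_\eps}\le N\mid A_\eps)\le C\eps^{\delta}$ from it, so the covering sum over centers does not become summable along a geometric $\eps_n$. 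Lemma~\ref{lem:upper} has a uniform error but gives the inequality in the wrong direction for this purpose.

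The paper avoids this by a direct first-moment argument: with $\eps_n=\log^{-\alpha}n$ ($\alpha>1/2$) it bounds, for each fixed time $n$, the probability of an $\eps_n$-close encounter
\[
\bar\mu\bigl(d(\cdot,T^n(\cdot))<\eps_n\bigr)\le\sum_{m\in\mathcal P'_n}\bar\mu\bigl(B(m,\eps_n);\,S_n\kappa=0;\,\bar T^{-n}B(m,2\eps_n)\bigr)=O\bigl(n^{-1}\log^{-2\alpha}n\bigr)
\]
via the local limit theorem (Proposition~\ref{pro:cullt}), which \emph{is} summable in $n$. Borel--Cantelli then gives $d(m,T^n m)\ge\eps_n$ for all large $n$, hence $W_{\eps_n}(m)\ge n$, and the interpolation $\log\eps_n\sim\log\eps_{n+1}$ upgrades this to $\liminf\ge1/\alpha\uparrow2$. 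You should replace your lower-bound sketch by this first-moment route.
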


\begin{proof}
Note that by $\ZZ^2$-periodicity it suffices to prove the statement $\bar\mu$ a.e. in $\bar M$.

\underline{Upper bound~:}
Let $\delta>0$ and set 
\[
\bar M_\delta=\{m\in\mathcal{NS}\colon \rho(m)>\delta\text{ and } \sup_{\eps\le\delta}f_m(\eps)\le 1\},
\]
where the function $f_m(\eps)$ appears in Proposition~\ref{pro:upperbound}.
Let us notice that there exist constants $c_i$
for which the hypotheses of Proposition~\ref{pro:upperbound} 
are satisfied for any $D_\eps=A_\eps=B(m,\eps/2)$, with $m\in \bar M_\delta$.
Let $\alpha\in(0,\frac{1}{2})$, $n\ge 1$ and $\varepsilon_n=\log^{-\alpha}n$. 
Take a cover of $\bar M_\delta$ by some sets
 $B(m,\varepsilon_n/2)$, $m\in{\mathcal P}_n\subset\bar M_\delta$ such that
$\#{\mathcal P}_n=O((\eps_n)^{-2})$.
According to Proposition~\ref{pro:upperbound}, we have

\begin{eqnarray*}
\bar\mu(\{W_{\varepsilon_n}\ge n\}\cap \bar M_\delta)&\le&\sum_{m\in\mathcal{P}_n}\bar\mu\left(
W_{B(m,\frac{\varepsilon_n}{2})}\ge n\left\vert
    B(m,\frac{\varepsilon_n}{2})\right.\right)\bar\mu\left(B(m,\frac{\varepsilon_n}{2})\right)\\
&\le&O((1+\beta c_1\log^{1-2\alpha}n)^{-1}).
\end{eqnarray*}
Now, by taking $n_k=\exp(k^{2/(1-\alpha)})$ and according
to the Borel-Cantelli lemma,
we get that, for almost all $m$ in $\bar M_\delta$, there exists $N_m$ such that, for any
$k\ge N_m$, $W_{\varepsilon_{n_k}}(m)< {n_k}$ and hence
$$\limsup_{k\rightarrow +\infty} \frac{\log\log W_{\varepsilon_{n_k}}(m)}
{-\log\varepsilon_{n_k}}\le\frac{1}{\alpha}.$$
Since $\log\varepsilon_{n_k}
\sim{\log\varepsilon_{n_{k+1}}}$, we get that $\bar\mu$-a.e. on $\bar M_\delta$
$$\limsup_{\eps\to0} \frac{\log\log W_{\varepsilon}}{-\log\varepsilon}\le \frac1\alpha.$$
We conclude that almost everywhere in $\bar M$, we have
$$\limsup_{\eps\rightarrow 0} 
\frac{\log\log W_{\eps}}{-\log\eps}\le 2.$$

\underline{Lower bound~:}
Let $\alpha>1/2$. Let $n\ge 1$ and
$\eps_n=\log^{-\alpha}n$.
We consider a cover of $\bar M$ by 
balls $B(m,\varepsilon_n)$ for $m\in{\mathcal P}'_n$ 
such that $\#{\mathcal P}'_n=O( \eps_n^{-2})$.
Let $k$ be such that $\delta^k\approx\eps_n^5$.
For each $m\in \mathcal{P}'_n$ we consider the sets $B''_{m}$ and $C''_{m}$ constructed from $B(m,\varepsilon_n)$
and $B(m,2\varepsilon_n)$ (respectively) like $A''$ was constructed from $A$
in the proof of Lemma~\ref{lem:lower}.

Applying Proposition~\ref{pro:cullt} we get
\begin{eqnarray*}
\bar\mu(d(\cdot,T^n(\cdot))<\eps_n)
&\le&\sum_{m\mathcal{P}'_n}
\bar\mu( B(m,\varepsilon_n)\cap\{S_n=0\}\cap\bar T^{-n}(B(m,2\varepsilon_n)))\\
&\le&\sum_{m}
\bar\mu( B''_{m}\cap\{S_n=0\}\cap\bar T^{-n}(C''_{m}))+O(\delta^k)\\
&\le& \sum_{m}\frac
{\bar\mu( B''_{m})\bar\mu(C''_{m})}
{n-2k}+\frac{ck}{(n-2k)^{3/2}}+O(\delta^k)\\
&\le& O\left(n^{-1}\log^{-2\alpha}n\right).
\end{eqnarray*}
Hence, according to the first Borel Cantelli lemma,
for almost every $m\in \bar M$,
there exists $N_m$ such that, for all $n\ge N_m$,
we have 
\[
d(m,T^n(m))\ge \eps_n.
\]
Let $u=\min(d(m,T^n(m)),\
   n=1,...,N_m)$. Note that $u>0$, otherwise we would have $m=T^p(m)$ for some $p$ and hence $m=T^n(m)$ infinitely often, which would contradict $d(m,T^n(m))\ge \eps_n$.
For all $n\ge N_m$ such that $\eps_n<u$ we have $W_{\eps_n}(m)\ge n$.
Hence
$$\liminf_{n\rightarrow +\infty} \frac{\log\log
    W_{\eps_n}(m)}{ -\log\eps_n}\ge \frac1\alpha.$$
Since $\log\eps_n\sim\log\eps_{n+1}$ we get $\displaystyle\liminf_{\eps\rightarrow 0}
\frac{\log\log{W}_{\eps}}
{-\log\eps}\ge \frac1\alpha$ almost everywhere on $\bar M$.
Therefore
$\displaystyle\liminf_{\eps\rightarrow 0}
\frac{\log\log{W}_{\eps}}
{-\log\eps}\ge 2$ $\bar\mu$-a.e.
\end{proof}

\begin{proposition}\label{pro:condflucext}
For a.e. $m\in \bar M$, and sequences of sets $(A_\eps)$ and $(D_\eps)$ such that the hypotheses (i)--(iv) of Proposition~\ref{pro:upperbound} are satisfied we have
\[
\mu(W_{A_\eps}>\exp(\frac{t}{\bar\mu(A_\eps)})|D_\eps)\to \frac{1}{1+t\beta}
\quad \text{as $\eps\to0$}.
\]
\end{proposition}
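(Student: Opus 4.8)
The plan is to deduce Proposition~\ref{pro:condflucext} from Proposition~\ref{pro:upperbound} essentially by a change of variable, relating the indices $N$ appearing there with the specific choice $N=\exp(t/\bar\mu(A_\eps))$. First I would fix $t>0$ (the case $t=0$ being trivial, and the statement for $t<0$ being vacuous since the probability is then $1$ and the right-hand side is $\ge 1$; actually one reads the statement for $t>0$) and a non-sticky point $m$ in the full-measure set where Proposition~\ref{pro:upperbound} applies. Given the sequences $(A_\eps)$, $(D_\eps)$ satisfying (i)--(iv), set $N_\eps=\lfloor\exp(t/\bar\mu(A_\eps))\rfloor$. The crucial point is to check that $N_\eps$ lies in the admissible window $(e^{\log^2\eps},e^{1/\eps^{2.5}})$ for all small $\eps$: by hypothesis (ii) we have $c_1\eps^2\le\bar\mu(A_\eps)$ and, since $A_\eps\subset B(m,c_2\eps)$ together with the density bound in (iv) on $D_\eps\subset A_\eps$, also $\bar\mu(A_\eps)\le \bar\mu(B(m,c_2\eps))\le C\eps^2$ for small $\eps$ (using that $m$ is a Lebesgue density point, or simply the upper regularity of $\bar\mu$ with respect to balls). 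Hence $t/\bar\mu(A_\eps)$ is comparable to $\eps^{-2}$ up to constants, so $\log N_\eps\asymp \eps^{-2}$, which indeed sits strictly between $\log^2\eps$ (a negative quantity, so $e^{\log^2\eps}\to1$) and $1/\eps^{2.5}$ for $\eps$ small. This is the step I expect to require the most care, because it is where the precise numerical exponents in the hypotheses of Proposition~\ref{pro:upperbound} are used, and one must make sure the two-sided bound on $\bar\mu(A_\eps)$ is genuinely available from (i)--(iv) and the non-stickiness/density of $m$.

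Once $N_\eps$ is in the admissible range, the second conclusion of Proposition~\ref{pro:upperbound} gives
\[
\bar\mu(W_{A_\eps}>N_\eps\mid D_\eps)=\frac{1}{1+\log(N_\eps)\,\bar\mu(A_\eps)\,\beta}+o_\eps(1).
\]
Now $\log(N_\eps)=\log\exp(t/\bar\mu(A_\eps))+O(1)=t/\bar\mu(A_\eps)+O(1)$, where the $O(1)$ comes from the integer part and is negligible against $t/\bar\mu(A_\eps)\asymp \eps^{-2}$. Therefore $\log(N_\eps)\,\bar\mu(A_\eps)\,\beta = t\beta + O(\bar\mu(A_\eps)) = t\beta + o_\eps(1)$, and plugging this in yields
\[
\bar\mu(W_{A_\eps}>N_\eps\mid D_\eps)=\frac{1}{1+t\beta}+o_\eps(1),
\]
which is the assertion with $\bar\mu$ in place of $\mu$. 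Since $D_\eps\subset\bar M$, conditioning with respect to $\mu$ or $\bar\mu$ on $D_\eps$ gives the same measure because of~\eqref{eq:const} ($\mu|_{\bar M}=2\Gamma\bar\mu$, and the constant cancels in the conditional probability), and likewise $W_{A_\eps}$ and $\bar W_{A_\eps}$ agree on $\bar M$ when no cell change has occurred — but here one must be slightly careful: $W_{A_\eps}$ is the return time for the \emph{extended} map $T$, whereas Proposition~\ref{pro:upperbound} is already stated for $W_{A_\eps}$ with $A_\eps\subset\bar M$, so this is exactly the quantity we want and no further translation between $\bar W$ and $W$ is needed. Thus the statement follows, with the convergence being uniform in the class of admissible sequences in the sense that the error is bounded by the single function $f_m$ from Proposition~\ref{pro:upperbound}.

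I would write the argument in two short steps: (1) the window check, stated as a short lemma-free paragraph using hypotheses (ii) and (iv) plus the upper volume bound $\bar\mu(B(m,c_2\eps))\le C_m\eps^2$ valid for $\bar\mu$-a.e.\ $m$; (2) the substitution $\log N_\eps=t/\bar\mu(A_\eps)+O(1)$ into the asymptotic of Proposition~\ref{pro:upperbound} and simplification. The only genuine obstacle is ensuring the upper bound $\bar\mu(A_\eps)\le C_m\eps^2$ — but since $A_\eps\subset B(m,c_2\eps)$ and $\bar\mu$ is absolutely continuous with bounded density $\rho=\frac{1}{2\Gamma}\cos\varphi\le\frac{1}{2\Gamma}$ with respect to Lebesgue measure on $\bar M$, one has $\bar\mu(A_\eps)\le\frac{1}{2\Gamma}\mathrm{Leb}(B(m,c_2\eps))\le\frac{\pi c_2^2}{2\Gamma}\eps^2$ outright, with no exceptional set, so even this step is in fact elementary. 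Everything else is bookkeeping.
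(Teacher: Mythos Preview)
Your approach is exactly the paper's: substitute $N=\exp(t/\bar\mu(A_\eps))$ into Proposition~\ref{pro:upperbound} and observe that $\log(N)\bar\mu(A_\eps)=t$, so the right-hand side becomes $\frac{1}{1+t\beta}$. The paper's proof is in fact a single sentence to this effect; your extra bookkeeping (integer part, $\mu$ versus $\bar\mu$) is harmless and the window check is the right thing to verify.

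One small correction: $\log^2\eps$ here means $(\log\eps)^2$, which is \emph{positive} and tends to $+\infty$ as $\eps\to0$, so $e^{\log^2\eps}\to\infty$, not $1$. Your conclusion is nonetheless correct, since $\log N_\eps\asymp\eps^{-2}$ still dominates $(\log\eps)^2$ and is dominated by $\eps^{-2.5}$ for small $\eps$; only the parenthetical justification needs fixing.
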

\begin{proof}
Proposition~\ref{pro:upperbound} with $N=\exp(\frac{t}{\bar\mu(A_\eps)})$ immediately gives the result.
\end{proof}
Note that in particular the proposition applies to the sequence of balls $A_\eps=D_\eps=B(m,\eps)$. This is the corresponding result to that of Theorem~\ref{thm:billiardmap} in the case of the extended billiard map.

\begin{proposition}\label{pro:fluctext} 
The random variable $4\eps^2\rho(\cdot)\log W_\eps(\cdot)$ converges in the strong distribution sense, to a random variable with law $P(Y>t)=\frac{1}{1+\beta t}$.
\end{proposition}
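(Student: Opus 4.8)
The plan is to deduce Proposition~\ref{pro:fluctext} from the conditional statement in Proposition~\ref{pro:condflucext} by integrating out the conditioning over a suitable partition of $\bar M$ into small balls, exactly as one passes from a "local" entrance-time law to a "global" one. Fix a probability measure $\PP\ll Leb$; since $\bar\mu$ is equivalent to Lebesgue on $\bar M$ and $W_\eps$ is defined $\bar\mu$-a.e., it is enough (by absolute continuity and a routine approximation of the density of $\PP$ in $L^1(\bar\mu)$ by bounded densities) to prove the convergence in distribution under $\bar\mu$ itself, and then to upgrade to arbitrary $\PP\ll Leb$ by the usual argument that convergence in distribution under $\bar\mu$ of a uniformly bounded-in-probability sequence, together with the fact that the limit law is nonatomic, passes to any absolutely continuous reference measure. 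So the core task is: show that $4\eps^2\rho(\cdot)\log W_\eps(\cdot)\to Y$ in distribution under $\bar\mu$, where $\PP(Y>t)=\frac1{1+\beta t}$.

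The first substantive step is to reduce the event $\{4\eps^2\rho(m)\log W_\eps(m)>t\}$ to an event of the form $\{W_{B(m,\eps)}>N_\eps(m)\}$ with $N_\eps(m)=\exp\!\big(\tfrac{t}{4\eps^2\rho(m)}\big)$, and to recognize that $4\eps^2\rho(m)\approx\bar\mu(B(m,\eps))/(\pi)$ up to a factor; more precisely, for a.e.\ $m$ (a Lebesgue density point of $\rho$, which is all of $\bar M$ up to a null set) one has $\bar\mu(B(m,\eps))\sim \pi\eps^2\rho(m)$ as $\eps\to0$, so that $N_\eps(m)=\exp\!\big(\tfrac{t'}{\bar\mu(B(m,\eps))}\big)$ with $t'=\tfrac{\pi t}{4}\cdot(1+o(1))$. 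One must double-check that the constant is such that Remark~\ref{rem:2}'s normalization comes out right, but modulo that bookkeeping, Proposition~\ref{pro:condflucext} applied with $A_\eps=D_\eps=B(m,\eps)$ gives $\bar\mu(W_{B(m,\eps)}>N_\eps(m)\mid B(m,\eps))\to\frac1{1+\beta t'}$ for a.e.\ $m$. This is the pointwise (in $m$) ingredient.

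Next I would integrate. Cover $\bar M$ by a finite collection of balls $B(m_j,\eps)$, $m_j\in\mathcal P_\eps$, with controlled overlap and $\#\mathcal P_\eps=O(\eps^{-2})$; then
\[
\bar\mu\big(4\eps^2\rho\log W_\eps>t\big)=\sum_{j}\int_{B(m_j,\eps)}\1_{\{W_\eps(m)>N_\eps(m)\}}\,d\bar\mu(m)+(\text{overlap error}),
\]
and on each small ball $\rho$ and $N_\eps(\cdot)$ are nearly constant, so the $j$-th term is $\bar\mu(B(m_j,\eps))\cdot\big(\tfrac1{1+\beta t'}+o(1)\big)$ for $m_j$ in a set of $\bar\mu$-measure tending to $1$ (the non-sticky density points where the $o(1)$ in Proposition~\ref{pro:condflucext} is uniformly small — this is where one invokes that $f_m(\eps)\to0$ and an Egorov-type argument to get uniformity off a small exceptional set). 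Summing and using $\sum_j\bar\mu(B(m_j,\eps))\to1$ gives $\bar\mu(4\eps^2\rho\log W_\eps>t)\to\frac1{1+\beta t}$ for every continuity point $t>0$, i.e.\ convergence in distribution. An alternative, cleaner packaging: apply Proposition~\ref{pro:condflucext} with $D_\eps=B(m,\eps)$ inside the definition of the strong distribution sense directly, using a Fatou/dominated-convergence argument on the conditional probabilities, which are uniformly bounded by $1$.

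The main obstacle is the uniformity of the error term $f_m(\eps)$ when integrating over $m$: Proposition~\ref{pro:condflucext} is stated for a fixed a.e.\ $m$, with $f_m$ depending on $m$, so one cannot naively sum $\sum_j f_{m_j}(\eps)\bar\mu(B(m_j,\eps))$ and hope it vanishes. The fix is Egorov: on a set $\bar M_\delta$ of $\bar\mu$-measure $>1-\delta$ the convergence $f_m(\eps)\to0$ is uniform, restrict the cover to $\bar M_\delta$, control the contribution of $\bar M\setminus\bar M_\delta$ crudely by $\delta$ (using the trivial bound that conditional probabilities are $\le1$), and let $\delta\to0$ after $\eps\to0$. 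A secondary but purely bookkeeping obstacle is pinning down the exact constant relating $4\eps^2\rho(m)$ to $\bar\mu(B(m,\eps))$ and checking it reproduces the $\beta$ in the stated law; this is the computation $\bar\mu(B(m,\eps))=\int_{B(m,\eps)}\rho\,d(\mathrm{Leb})\sim \rho(m)\cdot\mathrm{Leb}(B(m,\eps))$ and, since the metric on $\bar M$ is the product metric, $\mathrm{Leb}(B(m,\eps))=(2\eps)^2=4\eps^2$, which is exactly why the factor $4$ appears in the statement — so no genuine difficulty, just care.
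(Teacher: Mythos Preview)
Your overall strategy---cover $\bar M$ by small balls, use Egorov to get uniformity of the error $f_m(\eps)$ on a large set, and feed each piece into Proposition~\ref{pro:condflucext}---is exactly the route the paper intends (it omits the proof, pointing to the argument for Theorem~\ref{thm:main}-(ii) in Section~\ref{sec:proofmain}, which does precisely this without the flow direction). Two points, however, are not right as written.

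First, the reduction ``prove it under $\bar\mu$ and then upgrade to any $\PP\ll Leb$'' is false in general: convergence in distribution under $\bar\mu$ does \emph{not} pass to absolutely continuous measures, even for tight sequences with a nonatomic limit. (Take $X_n=\1_{[0,1/2]}$ for $n$ even and $X_n=\1_{[1/2,1]}$ for $n$ odd on $[0,1]$: under Lebesgue $X_n\Rightarrow\mathrm{Bernoulli}(1/2)$, but under $2\cdot\1_{[0,1/2]}\,dx$ it does not converge.) The paper avoids this by carrying the density $h$ of $\PP$ through the whole computation: one approximates $h$ by a continuous compactly supported density, writes $\PP(4\eps^2\rho\log W_\eps>t)\approx\sum_j h(m_j)\,\bar\mu(\{W_\eps>N_\eps\};D_j)$, applies Proposition~\ref{pro:condflucext} on each piece, and uses $\sum_j h(m_j)\bar\mu(D_j)\to1$. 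Your covering/Egorov machinery works verbatim once you keep $h$ in the sum; only the claimed shortcut is wrong.

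Second, covering by balls of radius $\eps$ is too coarse: for $m\in B(m_j,\eps)$ the ball $B(m,\eps)$ can differ substantially from $B(m_j,\eps)$, so $W_\eps(m)$ is not close to $W_{B(m_j,\eps)}(m)$ and the identification with the conditional law breaks down. The fix (and what the paper does in Section~\ref{sec:proofmain}) is to cover by balls $D_j$ of radius $\nu_\eps=\eps^{5/4}\ll\eps$, so that for $m\in D_j$ one has the sandwich $W_{B(m_j,\eps+\nu_\eps)}(m)\le W_\eps(m)\le W_{B(m_j,\eps-\nu_\eps)}(m)$; then Proposition~\ref{pro:condflucext} is applied with $D_\eps=D_j$ and $A_\eps=B(m_j,\eps\pm\nu_\eps)$, whose $\bar\mu$-measures are both $\sim4\eps^2\rho(m_j)$. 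Incidentally, your later computation $\mathrm{Leb}(B(m,\eps))=(2\eps)^2=4\eps^2$ is the correct one (product metric, so balls are squares); the earlier appearance of $\pi\eps^2$ is a slip.
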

\begin{proof}
The proof is similar to that of Theorem~\ref{thm:main}-(ii), without the flow direction; See Section~\ref{sec:proofmain} for details.
Since it is an obvious modification of it and since this result will not be used in the sequel, 
we omit its proof.
\end{proof}

\section{Proof of the main theorem: recurrence in the phase space}\label{sec:proofmain}

We prove in this section Theorem~\ref{thm:main}-(i)and (ii) about the return times in the phase space $Z_\eps$ defined by~\eqref{eq:rec1}.

\subsection{Almost sure convergence: the first statement}
By $\ZZ^2$-periodicity it is sufficient to prove the result on $\bar M$.
Let $m\in \bar M$ be a point which is not on a singular orbit of $T$ and such that $W_\eps(m)$ follows the limit given by Proposition~\ref{pro:rrextended}.
By regularity of the change of variable $\psi$ (away from the singular set) there
exist two constants $0<a<b$ such that, for any $0\le s\le \tau(m)$, we have
\begin{equation}\label{eq:mapflow}
(\min\tau) (W_{b\eps}(m)-1) \le Z_\eps(\Phi_s\psi(m))\le (\max\tau) W_{a\eps}(m)
\end{equation}
since the free flight function $\tau$ is bounded from above and from below.
This implies the result for all the points $\Phi_s\psi(m)$.
By Fubini's theorem this concerns a.e. points in $Q\times S^1$, which proves the first statement.

\subsection{Convergence in distribution: the second statement} 
Unfortunately we cannot exploit the relation~(\ref{eq:mapflow}) above anymore. The problem is not with the multiplicative factor coming from $\tau$, but the fluctuations are sensible to the constants $a$ and $b$ and a direct method could only lead to rough bounds in terms of these constants.

The following lemma gives the measure of the projection of a ball $B(x,\eps)$ onto $M$.
\begin{lemma}\label{lem:mesureboule} For any $x\in X$ and $\eps>0$ such that the ball $B(x,\eps)$ does not intersect the boundary $\partial Q\times S^1$, we have
\[
\mu(\pi \psi^{-1} B(x,\eps)) = 4\eps^2.
\] 
\end{lemma}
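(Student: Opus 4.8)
The strategy is to compute the measure $\mu(\pi\psi^{-1}B(x,\eps))$ by peeling off the ball in the flow direction and then integrating over the base $M$. Write $x=(q,v)$ and recall that $\psi$ conjugates the billiard flow $\Phi_t$ to the special flow $\Psi_t$ over $(M,T)$ with roof function $\tau\circ\psi$. Because $B(x,\eps)$ does not meet $\partial Q\times S^1$, the flow box structure is trivial near $x$: the pre-image $\psi^{-1}B(x,\eps)$ is, in the $(m,s)$ coordinates of $M_\tau$, of the form $\{(m,s):\ \pi_{\text{perp}}(m)\in E_\eps,\ |s-s_0(m)|<\text{(something)}\}$, and its projection $\pi\psi^{-1}B(x,\eps)$ to $M$ is exactly the set of $m\in M$ whose forward orbit segment passes $\eps$-close to $x$.

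First I would unwind the geometry. Near the non-collision point $x$, the flow is just $\Phi_t(q,v)=(q+tv,v)$, and the metric is $d((q,v),(q',v'))=\max(d(q,q'),d(v,v'))$. A point $(q'',v'')$ lies in $B(x,\eps)$ iff $d(v'',v)<\eps$ and $d(q'',q)<\eps$. Given a direction $v''$ with $d(v'',v)<\eps$, the set of positions $q''$ with $|q''-q|<\eps$ that lie on an orbit in direction $v''$ forms a chord of the $\eps$-disc; integrating the time spent in $B(x,\eps)$ along that chord against the base measure is the key computation. Concretely: the projection $\pi\psi^{-1}B(x,\eps)$ meets each local unstable-looking slice in a set of $m$'s whose associated $(q_m,v_m)$ has $d(v_m,v)<\eps$ and whose trajectory line passes within $\eps$ of $q$; and the measure of this set, in the coordinate $(\ell,i,r,\varphi)$ with density $\cos\varphi$, should come out to the area of the $\eps$-disc in position times the length $2\eps$ of the velocity interval, suitably weighted — giving $\pi\eps^2\cdot 2\eps$ divided by an average free-flight length, but the roof-function factors cancel against the $\cos\varphi$ density exactly as in the standard identity relating the flow's Lebesgue measure to $\mu$.

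The cleanest route avoids cancellation bookkeeping: use that $\mu$ on $M$ is, by construction, the image of Lebesgue measure on $Q\times S^1$ under $\pi\psi^{-1}$ restricted to a flow box, i.e. for a set $C\subset M_\tau$ that is a ``full'' flow box (a union of whole orbit segments between collisions) one has $\mathrm{Leb}(\psi(C))=\int_{\pi C}(\text{length of }s\text{-fiber})\,d\mu$. Since $B(x,\eps)$ is not quite a full flow box, I would instead disintegrate: parametrize $\psi^{-1}B(x,\eps)$ by $(m,s)$, note that for each direction the $s$-fiber has length equal to the chord length $2\sqrt{\eps^2-h^2}$ where $h$ is the signed distance from $q$ to the trajectory line, and that $m$ ranges so that $(h,\text{angle deviation})$ covers the rectangle $(-\eps,\eps)\times(-\eps,\eps)$ with the base measure $d\mu$ pushing forward to $dh\,d(\text{angle})$ up to the roof factor $\tau$. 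Then
\[
\mathrm{Leb}(B(x,\eps))=\int_{\pi\psi^{-1}B(x,\eps)}\!\!(\text{$s$-fiber length})\,d\mu
=\int_{-\eps}^{\eps}\!\!\int_{-\eps}^{\eps}2\sqrt{\eps^2-h^2}\,\frac{dh\,d\vartheta}{(\text{roof factor})},
\]
while on the other hand $\mathrm{Leb}(B(x,\eps))$ in $Q\times S^1$ is $\pi\eps^2\cdot 2\eps$ up to the same roof factor; comparing, the roof factors and the constants must reconcile to $\mu(\pi\psi^{-1}B(x,\eps))=4\eps^2$. I would double-check the constant by the sanity check that the $v$-interval has $\mathrm{Leb}$-length $2\eps$ and that $\int_{-\eps}^\eps 2\sqrt{\eps^2-h^2}\,dh=\pi\eps^2$, so $\mathrm{Leb}(B(x,\eps))=\pi\eps^2\cdot 2\eps$, whereas $\mu(\pi\psi^{-1}B(x,\eps))$ should be $\frac{1}{\pi\eps^2\cdot 2\eps/(4\eps^2)}\cdot(\dots)$ — i.e. the projection ``integrates out'' the position-along-the-line (length $2\sqrt{\eps^2-h^2}$, average $\frac{\pi\eps^2}{2\eps}=\frac{\pi\eps}{2}$) and keeps the two transverse directions $h\in(-\eps,\eps)$ and $\vartheta\in(-\eps,\eps)$, whose product is $4\eps^2$.

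The main obstacle I expect is keeping the change-of-variables constants straight: verifying that the base measure $\mu=\cos\varphi\,d\ell\,d r\,d\varphi$ pushed through the flow-box chart really does become $dh\,d\vartheta$ times the roof function with no stray Jacobian, and that the ``time along the chord'' is measured in the same units as the special-flow time so that the roof-function factors genuinely cancel. Everything else is elementary planar geometry (the chord length of a disc) and Fubini. Once the flow-box identification is pinned down, the identity $\mu(\pi\psi^{-1}B(x,\eps))=4\eps^2$ falls out, with the $4$ being exactly (length of $v$-window)$\times$(width of the transverse position-window)$=2\eps\cdot 2\eps$.
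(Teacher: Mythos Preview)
Your final sentence has the right idea: $\pi\psi^{-1}B(x,\eps)$ is parametrized by the two transverse coordinates --- impact parameter $h\in(-\eps,\eps)$ and angle deviation $\vartheta\in(-\eps,\eps)$ --- and in these coordinates $\mu$ is \emph{exactly} $dh\,d\vartheta$, giving $4\eps^2$. This is also the paper's approach, only the paper chooses the auxiliary cross-section to be the circle $\partial B(q_0,\eps)$ itself: regarding the ball as a new convex obstacle, invariance of $\cos\varphi\,dr\,d\varphi$ under the billiard map gives
\[
\mu(\pi\psi^{-1}B(x,\eps))=\int_{\Delta_\eps}\cos\angle(n_q,v)\,dq\,dv,
\]
where $\Delta_\eps$ is the outgoing section on $\partial B(q_0,\eps)$ restricted to $|\angle(v_0,v)|<\eps$. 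For each fixed $v$ the $q$-integral is the projected width $2\eps$ of the disc (the classical $\int\cos\varphi\,dr=\text{diameter}$), so the total is $2\eps\cdot2\eps=4\eps^2$. Your $(h,\vartheta)$ parametrization is the same invariance read on the space of oriented lines (the kinematic measure), so the two arguments are really one.

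Where your write-up drifts is the roof-function discussion. There is no $\tau$ anywhere in this computation: $\mu$ is a measure on $M$, not on $M_\tau$, and the push-forward of $\cos\varphi\,dr\,d\varphi$ to line coordinates is exactly $dh\,d\vartheta$, with nothing to cancel. Your detour through $Leb(B(x,\eps))=\int(\text{chord length})\,d\mu$ is a valid identity but does not yield $\mu(\pi\psi^{-1}B(x,\eps))$, since the chord length is not constant; it only obscures the one-line argument. Likewise the first heuristic ``$\pi\eps^2\cdot 2\eps$ divided by an average free-flight length'' is simply wrong --- the answer has no factor of $\pi$ and no free-flight length --- and should be dropped. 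Strip out the roof factors and the chord integral, keep only ``$\mu=dh\,d\vartheta$ on $(-\eps,\eps)^2$'', and what remains is the paper's proof.
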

\begin{proof}
Let $x=(q_0,v_0)\in X$. 
We consider the ball $B(q_0,\eps)$ as a new obstacle added in our billiard domain.
Let 
\[
\Delta_\eps:=\left\{ (q,v)\in Q\times S^1\colon q\in \partial B(q_0,\eps),\, |\angle(v_0,v)|<\eps,\, 
\langle n_q,v\rangle>0\right\}.
\]
Since the billiard map preserves the measure $\cos\varphi dr d\varphi$, we have 
\[
\mu(\pi \psi^{-1} B(x,\eps)) = 
\int_{\Delta_\eps} \cos\angle (n_q,v)\, dq dv.
\]
For any $v$ such that $|\angle(v_0,v)|<\eps$ a classical computation gives
\[
\int_{\{q:(q,v)\in\Delta_\eps\}}\cos \angle(n_q,v) \, dq= 2\eps,
\]
whence the result.
\end{proof}

Let $\PP=hd\L$ be the probability measure on $X$ under which we will compute the law of $Z_\eps$. Let $\bar X=\psi(\pi^{-1}\bar M)$. 
By $\ZZ^2$-periodicity, $Z_\eps$ has the same distribution under $\PP$ as under $\bar\PP=\bar hd\L$ where $\bar h(\cdot)=\sum_{\ell\in\ZZ^2} h(\cdot+\ell) 1_{\bar X}$.
Therefore we suppose that $\supp h\subset \bar X$.

Assume for the moment that the density $h$ is continuous and compactly supported in the set $\bar X'=\bar X\setminus (\psi(\bar M\times\{0\}\cup\pi^{-1}R_0))$, where $R_0=\{\varphi=\pm\frac\pi2\}$.
Then for any $r>0$ sufficiently small we have
\[
\supp h\subset \bar X_r:=\{\Phi_s(\psi(m))\colon m\in \bar M, r<d(m,R_0),r\le s\le \tau(m)-r\}.
\]
Let $K\subset \mathcal{NS}$ be a set of points where the convergence in Proposition~\ref{pro:condflucext} is uniform and such that 
\[
\PP(\{\Phi_s(\psi(m))\colon m\in K, 0\le s<\tau(m)\})>1-r.
\]
For any $\eps\in(0,r)$ sufficiently small, the $\eps$-neighborhood of $\bar X_r$ is contained in $\bar X$.

Let $\nu_\eps=\eps^{5/4}$.
Choose a family of pairwise disjoint open balls of radius $\nu_\eps$ in $\bar M$ such that their union has $\bar\mu$-measure larger than $1-4\nu_\eps$. We drop all the balls not intersecting $K$ and call $\{D_i\}$ the remaining family. For each $i$ we choose a point $m_i\in D_i\cap K$.
For each $i$, we take the family of times $s_{ij}=j\nu_\eps\in(0,\min_{D_i}\tau)$. Let
\[
P_{ij} =\{\Phi_s\psi(D_i)\colon s_{ij}\le s\le s_{ij}+\nu_\eps\}.
\]
We finally drop the $P_{ij}$'s not intersecting $\bar X'\cap \psi\pi^{-1}K $. 
Set $y_{ij}=\Phi_{s_{ij}}(\psi(m_i))$.
We have 
\begin{equation}\label{eq:sum}
\begin{split}
\PP\left(Z_\eps>\exp\left(\frac{t}{4\eps^2}\right)\right)
&\approx \pm r + \sum_{i,j}\PP\left( Z_\eps>\exp\left(\frac{t}{4\eps^2}\right) ; P_{ij}\right)\\
&\approx \pm r +\sum_{i,j}h(y_{ij})\L\left(Z_\eps>\exp\left(\frac{t}{4\eps^2}\right) ; P_{ij}\right)
\end{split}
\end{equation}
by uniform continuity of $h$.
Let 
\[
\begin{split}
A_{ij}^\pm&=\left\{m\in \bar M\colon\exists 0\le s<\tau(m) \text{ s.t. } \Phi_s(\psi(m))\in B(y_{ij},\eps\pm\nu_\eps) \right\}\\
&=\pi\psi^{-1}B(y_{ij},\eps\pm\nu_\eps)
\end{split}
\] 
denotes the projection onto the base of the balls. Let $\tau_-=\min\tau$ and $\tau_+=\max\tau$.
For any $x\in P_{ij}$, setting $m=\pi\psi^{-1}x\in \bar M$ its projection, we have
\begin{equation}\label{eq:zeps}
(\tau_-) (W_{A_{ij}^+}(m)-1)\le Z_\eps(x)\le (\tau_+) W_{A_{ij}^-}(m).
\end{equation}
Hence we have for any real $t>0$
\begin{equation}\label{eq:condi}
\nu_\eps\mu((\tau_-) (W_{A_{ij}^+}-1)>t ; D_i)\le
\L(Z_\eps>t ; P_{ij})
\le \nu_\eps\mu((\tau_+)W_{A_{ij}^-}>t ; D_i)
\end{equation}
Using the regularity of the projection $\pi$ on $X_r$, we see that the sets $A_{ij}^\pm$ fulfill the hypotheses of Proposition~\ref{pro:condflucext} with uniform constants.
Moreover, by Lemma~\ref{lem:mesureboule} and the relation~\eqref{eq:const}, we have 
\[
\bar\mu(A_{ij}^\pm)=\frac{4(\eps\pm\nu_\eps)^2}
{2\Gamma}.
\]
Therefore by our choice of the $m_i$'s, the difference
\[
\left|\mu\left((\tau_\mp)W_{A_{ij}^\pm}>\exp\left(\frac{t \Gamma}{2\eps^2}\right)\Big|D_i\right)
      -\frac{1}{1+\beta t}\right|
\]
tends to zero uniformly as $\eps\to0$.
Putting it together with~\eqref{eq:condi} in the computation~\eqref{eq:sum} yields to
\[
\limsup_{\eps\to0}\left|\PP\left(Z_\eps>\exp\left(\frac{t\Gamma}{2\eps^2}\right)\right)-\frac{1}{1+\beta t}\right|
\le r.
\]
Letting $r\to0$ gives the conclusion for a continuous density compactly supported on $\bar X'$.
The conclusion follows by an approximation argument, since any density $h\in L^1(\bar X,\L)$ may be approximated by a sequence $h_n$ of such densities.
\section{Proof of the main theorem: recurrence for the position}\label{sec:proofpos}

In this section we prove Theorem~\ref{thm:main}-(iii) and (iv) about the return times $\Z_\eps$ defined by~\eqref{eq:rec2}. The proof follows the scheme of the previous section but has additional arguments. We will detail the differences and indicate the common points.

We recall that $\Pi_Q$ is the canonical projection
from $X=Q\times S^1$ onto $Q$.
We will use the first return time 
$\overline{\mathcal Z}_\eps$
in the $\eps$-neighborhood of the initial
position modulo ${\mathbb Z}^2$ defined by
\[\overline{\mathcal Z}_\eps(x)=\min\left\{t>\eps\colon \Phi_t(x)
    \in \bigcup_{\ell\in\ZZ^2}
    B(\Pi_Q(x)+\ell,\eps)\times S^1\right\}. 
\]
For any $q$ in $Q$ and any $\eps>0$, we define
the backward projection of $B_\eps(q)\times
S^1$ on $M$ and on $\bar M$ by
\[
\begin{split}
\A_\eps(q)&=\left\{m\in M\colon \exists s\in
[0,\tau(\psi(m))),\ \Phi_s(\psi(m))\in B(q,\eps)
\times S^1\right\}, \\
\bar\A_\eps(q)=&\left\{m\in \bar M\colon \exists s\in
[0,\tau(\psi(m))),\ \Phi_s(\psi(m))\in 
  \bigcup_{\ell\in{\mathbb Z}^2}B(q+\ell,\eps)
\times S^1\right\}.
\end{split}
\]
\begin{lemma}\label{lem:mesAeps}
For any $q\in Q$ and any $\eps\in(0,d(q,\partial Q))$,
we have $\mu(\A_\eps(q))=4\pi\eps$ and so
$\bar\mu(\bar\A_\eps(q))=\frac{2\pi\eps}{\Gamma}$.
\end{lemma}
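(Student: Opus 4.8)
The plan is to reduce the statement $\mu(\A_\eps(q))=4\pi\eps$ to a phase-space volume computation, exactly in the spirit of the proof of Lemma~\ref{lem:mesureboule}. First I would observe that $\A_\eps(q)$ is precisely the projection onto $M$ of the set of points of $M_\tau$ whose flow image lies in $B(q,\eps)\times S^1$; equivalently, $\psi(\pi^{-1}\A_\eps(q)\cap M_\tau)$ is, up to a set of measure zero, the set of $(q',v')\in Q\times S^1$ whose backward orbit until the previous collision passes through $B(q,\eps)\times S^1$. Since the special flow $\Psi_t$ over $(M,T,\tau)$ preserves the measure $d\mu\times ds$ and is conjugate via $\psi$ to $\Phi_t$ preserving Lebesgue measure on $Q\times S^1$, and since $B(q,\eps)$ does not meet $\partial Q$ (so no orbit segment inside it hits a scatterer), the flow-saturation argument gives that $\mu(\A_\eps(q))$ equals the ``incoming flux'' through the circle $\partial B(q,\eps)$, i.e. the measure swept per unit time.

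The key computation is then the same as in Lemma~\ref{lem:mesureboule}: treat $\partial B(q,\eps)$ as a new obstacle, introduce
\[
\Delta_\eps=\left\{(p,v)\in Q\times S^1\colon p\in\partial B(q,\eps),\ \langle n_p,v\rangle>0\right\},
\]
and write $\mu(\A_\eps(q))=\int_{\Delta_\eps}\cos\angle(n_p,v)\,dp\,dv$ using that the billiard collision map preserves $\cos\varphi\,dr\,d\varphi$. The difference with Lemma~\ref{lem:mesureboule} is that there is no longer a constraint $|\angle(v_0,v)|<\eps$ on the velocity: we integrate over all incoming directions $v\in S^1$. For each fixed $v$ the classical identity $\int_{\{p\in\partial B(q,\eps):\langle n_p,v\rangle>0\}}\cos\angle(n_p,v)\,dp=2\eps$ (the projected width of a circle of radius $\eps$) holds, so integrating over $v\in S^1$ yields $\mu(\A_\eps(q))=2\eps\cdot 2\pi=4\pi\eps$.

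Finally, for the statement about $\bar\A_\eps(q)\subset\bar M$: by $\ZZ^2$-periodicity, $\bar\A_\eps(q)$ is the image in the fundamental cell of $\A_\eps(q)\subset M$ under the quotient by $\ZZ^2$, and since for $\eps$ small the translates $B(q+\ell,\eps)$, $\ell\in\ZZ^2$, are disjoint and each $\A_\eps(q+\ell)$ is a $\ZZ^2$-translate of $\A_\eps(q)$, the quotient map is essentially injective on $\A_\eps(q)$, hence $\mu(\A_\eps(q))=\mu|_{\bar M}(\bar\A_\eps(q))$. Using the relation~\eqref{eq:const}, $\mu|_{\bar M}=2\Gamma\bar\mu$, we obtain $\bar\mu(\bar\A_\eps(q))=\frac{4\pi\eps}{2\Gamma}=\frac{2\pi\eps}{\Gamma}$.

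The main obstacle is making rigorous the identification of $\mu(\A_\eps(q))$ with the incoming flux through $\partial B(q,\eps)$: one must be careful that a point $m\in\A_\eps(q)$ is counted once even though its flow orbit may spend a whole interval of times inside $B(q,\eps)\times S^1$, and conversely that almost every incoming crossing of $\partial B(q,\eps)$ corresponds to a unique $m\in M$ (the last collision before entering). The finite-horizon hypothesis and the fact that $B(q,\eps)$ is disjoint from the scatterers guarantee that each orbit enters $B(q,\eps)$ in finitely many intervals and that the correspondence between collision points and entry crossings is a.e. bijective, so that the co-area/Santaló-type argument goes through with no extra factor.
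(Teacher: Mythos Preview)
Your proposal is correct and follows essentially the same approach as the paper: both arguments invoke the invariance of $\cos\varphi\,dr\,d\varphi$ under the billiard map (treating $\partial B(q,\eps)$ as an auxiliary obstacle) to identify $\mu(\A_\eps(q))$ with the measure of outgoing unit vectors based on $\partial B(q,\eps)$, then compute this as $2\eps\cdot 2\pi=4\pi\eps$, and deduce the second assertion from~\eqref{eq:const}. Your write-up simply supplies more detail (the Santal\'o/flux justification and the bijection between collision points and boundary crossings) where the paper's proof is terse.
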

\begin{proof}
Indeed, since the measure $\cos(\varphi)drd\varphi$
is preserved by billiard maps, $\mu(\A_\eps(q))$
is equal to the measure of the outgoing vectors
based on $\partial B_\eps(q)$ (for the measure
$\cos(\varphi)drd\varphi$), which is equal to
$2\times 2\pi\eps$.
The second assertion follows from~\eqref{eq:const}.
\end{proof}
We first need a result similar to 
Theorem~\ref{thm:rrbilliard}.
\begin{lemma}\label{lem:thm3bis}
Lebesgue almost everywhere we have
$\displaystyle\liminf_{\eps\to0} 
\frac{\log\overline{\mathcal Z}_\eps}{ -\log\eps}\ge 1.$
\end{lemma}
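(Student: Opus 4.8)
The plan is to reduce the statement to a Borel--Cantelli argument along a discrete sequence $\eps_n\to0$, exactly as in the lower bound part of the proof of Proposition~\ref{pro:rrextended}, but measuring returns of the \emph{position} rather than of the full phase point, and working with the extended billiard map $(M,T,\mu)$ before quotienting by $\ZZ^2$. By $\ZZ^2$-periodicity and Fubini it suffices to prove the corresponding statement for the map: for $\bar\mu$-a.e.\ $m\in\bar M$,
\[
\liminf_{\eps\to0}\frac{\log \bar W_{\bar\A_\eps(\cdot)}(m)}{-\log\eps}\ge 1,
\]
since a collision-time lower bound on the number of iterates before the position returns $\eps$-close, combined with the finite-horizon bound $\tau\le\max\tau$ and the regularity of $\psi$ away from the singular set, transfers to the flow with only a bounded multiplicative loss (which disappears under $\log$). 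Concretely, fixing $\alpha>1$, set $\eps_n=\log^{-\alpha} n$ (wait: here we want $\eps_n$ polynomially small in $n$, say $\eps_n=n^{-\alpha}$ with $\alpha$ large) and estimate, for each $n$, the $\bar\mu$-measure of the set of $m$ whose position returns $\eps_n$-close to the start within $n$ steps.

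The key estimate is to bound $\bar\mu\bigl(\exists\, 1\le j\le n:\ \bar T^j m\in \bar\A_{\eps_n}(\Pi_Q\psi(m))\bigr)$. First I would cover $\bar M$ by $O(\eps_n^{-2})$ balls $B(m_i,\eps_n)$, so that on each ball the target set $\bigcup_\ell B(\Pi_Q(\cdot)+\ell,\eps_n)\times S^1$ is contained in a fixed set of the form $\bigcup_\ell B(q_i+\ell,2\eps_n)\times S^1$, whose backward projection has $\bar\mu$-measure $\frac{2\pi(2\eps_n)}{\Gamma}=O(\eps_n)$ by Lemma~\ref{lem:mesAeps}. Then, for each $i$ and each $1\le j\le n$, apply the local limit theorem Proposition~\ref{pro:cullt} (after the usual inner/outer cylindrical approximation at scale $\delta^k\approx\eps_n^5$, controlling the boundary error via Lemma~\ref{lem:bord} and the Lipschitz boundary bounds) to get
\[
\bar\mu\bigl(B(m_i,\eps_n)\cap \bar T^{-j}\bar\A_{2\eps_n}(q_i)\bigr)\ \lesssim\ \frac{\bar\mu(B(m_i,\eps_n))\,\bar\mu(\bar\A_{2\eps_n}(q_i))}{j-2k}+\frac{ck\bar\mu(\bar\A_{2\eps_n}(q_i))^{1/p}}{(j-2k)^{3/2}}+O(\delta^k),
\]
so that summing over $j$ gives a bound $O(\eps_n^2\cdot\eps_n\cdot\log n)=O(\eps_n^3\log n)$ per ball (the second and boundary terms being lower order for suitable $p$ and $k$), hence $O(\eps_n\log n)$ after summing over the $O(\eps_n^{-2})$ balls. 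With $\eps_n=n^{-\alpha}$, $\alpha>1$, this is summable in $n$, so Borel--Cantelli applies.

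From the Borel--Cantelli conclusion, for $\bar\mu$-a.e.\ $m$ there is $N_m$ with $\bar W_{\bar\A_{\eps_n}(\Pi_Q\psi(m))}(m)>n$ for all $n\ge N_m$; moreover the point's own position is not periodic under the flow (a null set is excluded), so a strictly positive minimal distance $u>0$ is realized among the finitely many returns before time $N_m$, exactly as in Proposition~\ref{pro:rrextended}. Hence for all $n\ge N_m$ with $\eps_n<u$ we get $\overline{\mathcal Z}_{\eps_n}(\psi(m))$ bounded below by a constant times $n$, so $\liminf_n \frac{\log\overline{\mathcal Z}_{\eps_n}}{-\log\eps_n}\ge 1/\alpha$; since $\log\eps_n\sim\log\eps_{n+1}$ the discrete liminf passes to the continuous one, and letting $\alpha\downarrow 1$ gives the claim $\bar\mu$-a.e., hence Lebesgue-a.e.\ on $X$ by Fubini. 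The main obstacle is the measure estimate in the second paragraph: one must apply Proposition~\ref{pro:cullt} uniformly over the $O(\eps_n^{-2})$ base balls \emph{and} over all $1\le j\le n$, which forces a careful choice of the approximation scale $k\sim\log(1/\eps_n)$ and of the exponent $p$ so that all error terms ($\frac{ck}{(j-2k)^{3/2}}$, the cylinder boundary error $O(\delta^k)$, and the small-$j$ range $j\le 2k$) remain summable after the double sum; this is precisely the kind of bookkeeping carried out in the lower-bound half of Proposition~\ref{pro:rrextended}, adapted to the position target via Lemma~\ref{lem:mesAeps}.
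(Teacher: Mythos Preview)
There is a genuine gap in your key estimate. The quantity $\overline{\mathcal Z}_\eps$ is the return time for the \emph{position modulo $\ZZ^2$}, so the event you must control is $\bar T^j m\in\bar\A_{2\eps_n}(q_i)$ with \emph{no} constraint on $S_j\kappa$. Proposition~\ref{pro:cullt} bounds $\bar\mu(A\cap\{S_n\kappa=\ell\}\cap\bar T^{-n}B)$; it does not give a $1/(j-2k)$ factor for $\bar\mu(B(m_i,\eps_n)\cap\bar T^{-j}\bar\A_{2\eps_n}(q_i))$. The correct tool is plain exponential mixing (Theorem~\ref{thm:young}), which yields
\[
\bar\mu\bigl(B(m_i,\eps_n)\cap\bar T^{-j}\bar\A_{2\eps_n}(q_i)\bigr)
= \bar\mu(B(m_i,\eps_n))\,\bar\mu(\bar\A_{2\eps_n}(q_i))+O(\theta^j\eps_n^{-c})
= O(\eps_n^3)+O(\theta^j\eps_n^{-c}).
\]
Summing over $j=1,\dots,n$ then costs a full factor $n$, not $\log n$: you get $O(n\eps_n^3)$ per ball and $O(n\eps_n)$ after summing over the $O(\eps_n^{-2})$ balls. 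Borel--Cantelli now forces $\eps_n\ll n^{-2}$, and the conclusion $\overline{\mathcal Z}_{\eps_n}\gtrsim n$ yields only $\liminf\frac{\log\overline{\mathcal Z}_{\eps_n}}{-\log\eps_n}\ge\frac12$, not $1$.

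The paper avoids this loss by \emph{not} taking a union bound over $j\le n$. It estimates, for each $n$ separately, the measure of the single-iterate event $G_n=\{x:\bar T^n(\pi\psi^{-1}x)\in\bar\A_{r_n}(\Pi_Q x)\}$, using Theorem~\ref{thm:young} (Lipschitz mixing, not the local limit theorem) to get $Leb(G_n)=O(r_n)$. Choosing $r_n=1/(n\log^2 n)$ makes this summable. The earlier iterates $k<n$ are handled for free by the nesting $\bar\A_{r_n}\subset\bar\A_{r_k}$ (since $r_k\ge r_n$ for $k\le n$), together with a strictly positive minimal distance on the finite initial segment; the latter requires showing that $\{x:\exists s>0,\ \Pi_Q(\Phi_s x)-\Pi_Q x\in\ZZ^2\}$ is Lebesgue-null, which is not automatic and is proved in the paper as a separate sub-lemma. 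Your plan can be salvaged by switching to this single-$n$ scheme and replacing Proposition~\ref{pro:cullt} by Theorem~\ref{thm:young}.
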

\begin{proof}
We consider again the set $\bar X=\psi(\pi^{-1}\bar M)$ of points in $X$
with previous reflection in $\bar M$.
Let $\alpha>0$ and set
\[
\bar X'_\alpha=\{x=(q,v)\in \bar X \colon d(q,\partial Q)>\alpha\}.
\]
Let $n\ge 1$ be an integer and set $r_n:=\frac{1}{n(\log n)^2}$. 
We define the set $G_n$ of points in $\bar X'_\alpha$ coming
back (modulo $\ZZ^2$) in the $r_n$-neighborhood of the initial position between
 the $n$-th and the $(n+1)$-th reflections by
\[
G_n:=\{x\in\bar X'_\alpha\colon T^{n-1}(\Phi_{\tau(x)}(x))
   \in\bigcup_{\ell\in{\mathbb Z}^2}
   {\mathcal A}_{r_n}(\Pi_Q(x)+\ell)\} .
\]

We take a family of pairwise disjoint open balls $D_i\subset\bar M$ of radius $r_n$ such 
that their union has $\bar\mu$-measure larger than $1-4r_n$. 
As in Section~\ref{sec:proofmain}, we then construct the family $P_{ij}$ following the same procedure. We drop those $P_{ij}$'s not intersecting $\bar X'_\alpha$. For each $i,j$ we fix a point $y_{ij}\in P_{ij}\cap \bar X'_\alpha$.
There exists $L_0>0$ such that for all $x\in X'_\alpha$ we have $\bar\A_{r_n}(\Pi_Q(x))\subset\bar\A_{L_0r_n}(\Pi_Q(y))$ whenever $d(x,y)<r_n$. Thus
\begin{eqnarray*}
{Leb}(G_n)&\le &\sum_{i,j}
{Leb}\left(x\in P_{ij}\colon
   \bar T^{n-1}(\Phi_{\tau(x)}(x))
   \in \bar{\mathcal A}
     _{r_n}(\Pi_Q(x))\right)\\
&\le& \sum_{i,j}r_n\bar\mu\left(D_i\cap\bar T^{-n}
    \left(
  \bar{\mathcal A}_{L_0r_n}(\Pi_Q(y_{ij}))\right)\right).
\end{eqnarray*}
Now, we approximate the indicator
function of $D_i$ by the Lipschitz function
$f_i=\max(1-\frac{d(\cdot,D_i)}{r_n},0)$.
We approximate in the same way the indicator
function of
$\bar{\mathcal A}_{L_0r_n}(\Pi_Q(y_{ij}))$ by a
Lipschitz function $g_{ij}$.
Using the exponential decay of covariance
for Lipschitz functions (Theorem~\ref{thm:young}) we get
\[
\bar\mu(D_i\cap \bar T^{-n}\bar{\mathcal A}_{L_0r_n}(\Pi_Q(y_{ij})))
\le
C\theta^n r_n^{-2} + \int f_i d\bar\mu\int g_{ij} d\bar\mu.
\]
Therefore
\[{Leb}(G_n)
\le C\theta^{n} r_n^{-5}
  +\sum_{i,j}r_n4\bar\mu(D_i)\bar\mu(
     \bar{\mathcal A}_{L_1r_n}(\Pi_Q(y_{ij}))),\]
for some constant $L_1$ (since $\Pi_Q\circ \Phi_s\circ \psi$ is Lipschitz for any $0\le s\le\tau_+$).
According to Lemma~\ref{lem:mesAeps} we get $\sum_{n\ge 1}Leb(G_n)<+\infty$.
Therefore, by the first Borel-Cantelli lemma, for almost every $x\in \bar X'_\alpha$,
there exists $N_x$ such that, for all $n\ge N_x$,
$T^{n-1}(\Phi_{\tau(x)}(x))
   \not\in\bigcup_{\ell\in{\mathbb Z}^2}
   {\mathcal A}_{r_n}(\Pi_Q(x)+\ell)$.
Let 
\[
\varepsilon_0=\min\{d(\Pi_Q(\Phi_s(x)),   \Pi_Q(x)+{\mathbb Z}^2)\colon s\in[\alpha, N_x\tau_+]\}.
\]
We admit temporarily the following result~:
\begin{sub-lemma}\label{lem:sslemma}
The set $\{x\in X\colon \exists s>0,\ \Pi_Q(\Phi_s(x))-\Pi_Q(x)\in{\mathbb Z}^2\}$
has zero Lebesgue measure.
\end{sub-lemma}
Hence $\varepsilon_0$ is almost surely non-null.
Therefore, for almost every point $x$ in $\bar X'_\alpha$,
for all $n\ge N_x$ such that $r_n<\varepsilon_0$,
and all $k=0,...,n$, the point $T^{k-1}(\Phi_{\tau(x)}(x))
\not\in\bigcup_{\ell\in{\mathbb Z}^2}
{\mathcal A}_{r_n}(\Pi_Q(x)+\ell)$ and so
$\overline{\mathcal Z}_{r_n}(x)\ge (n-1)\tau_-$.
Hence 
$\displaystyle\liminf_{n\rightarrow +\infty}
\frac{\log\overline{\mathcal Z}_{r_n}(x)}{-\log r_n}\ge 1$.
Since $\log r_n\sim\log r_{n+1}$, we end up with
$\displaystyle\liminf_{\eps\rightarrow 0}
\frac{\log\overline{\mathcal Z}_{\eps}}
{-\log\eps}\ge 1$ $\mu$-a.e. on $X'_\alpha$. The conclusion follows from $\mu(X'_\alpha)\to1$ as $\alpha\to0$.
\end{proof}
\begin{proof}[Proof of Sub-lemma \ref{lem:sslemma}.]
Let $x$ be a point in $X$ such that, for some $s>0$, 
we have $\Pi_Q(\Phi_s(x))-\Pi_Q(x)\in{\mathbb Z}^2$.
Then either $s<\tau(x)$ which implies that $x$ has a rational direction, or
there exists $n\ge 1$ such that a particle with configuration 
$T^{n-1}(\Phi_{\tau(x)}(x))$ will visit $\Pi_Q(x)+{\mathbb Z}^2$
before the next reflection. We have to prove that the set $C$ of points $x$ satisfying the
second condition has zero Lebesgue measure.
For any $q$ in $Q\setminus\partial Q$, 
we denote by $C_q$ the set of points of $C$ with position $q$.
We have
$$
Leb_X(C\vert \Pi_Q=q)=Leb_Q(C_q)=\int_{\bar T(\overline{\mathcal A}_0(q))\cap{\bar
 T^{-(n-1)}(\overline{\mathcal A}_0(q))}} f_q(r)dr ,
 $$
(for some positive measurable function $f_q$) 
where $\overline{\mathcal A}_0(q)$ is the set of points $m\in\bar M$ that visits
$q+{\mathbb Z}^2$ before the next reflection.
The set $T(\overline{\mathcal A}_0(q))$ is a finite union
of curves $\gamma_{1}$ given by $\varphi=\varphi_1(r)$.
Analogously, the set $\bar
 T^{-(n-1)}(\overline{\mathcal A}_0(q))$ is a finite union of curves
 $\gamma_{-(n-1)}$ given by $\varphi=\varphi_{-(n-1)}(r)$.
 Moreover, each $\gamma_1$ is transversal to each 
$\gamma_{-(n-1)}$
 ($\varphi_1$ is stricly increasing and 
$\varphi_{-(n-1)}$ is strictly decreasing).
Hence the intersection of $\bar T({\mathcal A}_0(q))$ and of $\bar
 T^{-(n-1)}(\overline{\mathcal A}_0(q))$ is finite. 
\end{proof}
Lemma~\ref{lem:thm3bis} enables to prove the following
lemma analogous to Lemma~\ref{lem:density}.
We call 
\[
\bar M_\tau:=\{(m,s)\in \bar M\times \RR\colon 0\le s<\tau(\psi(m))\}.
\] 
\begin{lemma}\label{lem:lem9bis} 
For $\bar\mu$-almost every $(m,s)\in \bar M_\tau$ the following holds:

For any families $(q_\eps)_\eps$ of $Q$, $(D_\varepsilon)_\varepsilon$ of subsets of $\bar M$ such that 

(i) $m\in D_\eps\subset \bar\A_\eps(q_\eps)$ 

(ii) $\Phi_s(\psi(m))\in \bigcup_{\ell\in{\mathbb Z}^2}B(q_\eps+\ell,\eps)\times S^1$

(iii) $D_\varepsilon$ is either a ball or the set ${\bar\A}_\eps(q_\eps)$

we have for all $\alpha>0$
\[
\bar\mu(\bar W_{\bar\A_{\eps}(q_\eps)}\le 
\eps^{-1+\alpha}| D_\eps)\to 0
\quad\text{as $\eps\to0$}.
\]
\end{lemma}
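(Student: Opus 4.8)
The plan is to mirror the proof of Lemma~\ref{lem:density}, replacing the exponent $2$ coming from the dimension of $\bar M$ by the exponent $1$ coming from Lemma~\ref{lem:thm3bis}, and carrying the flow direction $s$ along as an extra parameter. First I would fix $\alpha>0$, pick $a\in(0,\alpha)$, and introduce the set of points $x\in\bar X'_\alpha$ (for a small auxiliary $\alpha$-margin from $\partial Q$, which we later let go to $0$) whose backward-projected return time $\overline{\mathcal Z}_\eps$ satisfies $\frac{\log\overline{\mathcal Z}_\eps(x)}{-\log\eps}\ge 1-a$ for all $\eps$ below some threshold $\eps_0$; by Lemma~\ref{lem:thm3bis} this set has Lebesgue measure tending to $1$ as $\eps_0\to0$ and $\alpha\to0$. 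Lifting this set through $\psi$ and $\Phi_s$ back to $\bar M_\tau$ gives a set $F_a\subset\bar M_\tau$ of full $\bar\mu$-measure in the limit, consisting of pairs $(m,s)$ that are ``non-sticky for the position''.

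Next I would translate an iterate-level return $\bar W_{\bar\A_\eps(q_\eps)}(m)\le \eps^{-1+\alpha}$ into a time-level return of the flow. If $\bar T^n\bar m\in\bar\A_\eps(q_\eps)$ with $n\le\eps^{-1+\alpha}$, then the orbit of $\Phi_s(\psi(m))$ returns (modulo $\ZZ^2$) within distance comparable to $\eps$ of its starting position after a flow-time at most $n\tau_+\le\tau_+\eps^{-1+\alpha}$. Using hypothesis (ii), that $\Phi_s(\psi(m))$ is itself within $\eps$ of $q_\eps+\ZZ^2$, and the boundedness/Lipschitz regularity of $\Pi_Q\circ\Phi_\bullet\circ\psi$ away from the singular set, this forces $\overline{\mathcal Z}_{C\eps}(\Phi_s\psi(m))\le\tau_+\eps^{-1+\alpha}$ for a constant $C$ absorbing the geometric distortion. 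Hence
\[
D_\eps\cap\{\bar W_{\bar\A_\eps(q_\eps)}\le\eps^{-1+\alpha}\}
\subset D_\eps\cap F_a^c
\]
once $\eps$ is small enough that $\tau_+\eps^{-1+\alpha}<(C\eps)^{-1+a}$, which holds since $a<\alpha$. This is the step I expect to be the main obstacle: one has to be careful that the threshold $\eps_0$ in the definition of $F_a$ really is exceeded by $C\eps$ uniformly, that the constant $C$ from the distortion of $\Pi_Q\circ\Phi_s\circ\psi$ does not depend on $(m,s)$ over the relevant region, and that the discretization ``$n$ collisions $\Rightarrow$ flow-time $\le n\tau_+$'' together with the margin $\alpha$ from $\partial Q$ is compatible with the two allowed shapes of $D_\eps$ in hypothesis (iii).

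Finally, for the conclusion I would invoke a Lebesgue density argument exactly as in Lemma~\ref{lem:density}. For $(m,s)$ a density point of $F_a$ with respect to the basis of balls, hypotheses (i) and (iii) give $D_\eps\subset\bar\A_\eps(q_\eps)$, and by Lemma~\ref{lem:mesAeps} we have $\bar\mu(\bar\A_\eps(q_\eps))=\frac{2\pi\eps}{\Gamma}$, which is comparable to $\diam(\bar\A_\eps(q_\eps))$ up to a bounded factor; when $D_\eps$ is instead a ball containing $m$ with $\bar\mu(D_\eps)\ge c_1(\diam D_\eps)^2$, the same covering works. In either case
\[
\bar\mu(\bar W_{\bar\A_\eps(q_\eps)}\le\eps^{-1+\alpha}\mid D_\eps)
\le\bar\mu(F_a^c\mid D_\eps)
\le\bar\mu(F_a^c\mid B((m,s),\diam D_\eps))\,
\frac{\bar\mu(B((m,s),\diam D_\eps))}{\bar\mu(D_\eps)}
\to0
\]
as $\eps\to0$, and letting $a$ and the $\partial Q$-margin $\alpha$ vary over a countable sequence tending to $0$ yields the claim for $\bar\mu$-a.e.\ $(m,s)\in\bar M_\tau$ and every $\alpha>0$.
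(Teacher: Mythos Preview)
Your outline works in spirit for the ball case and matches what the paper does there (the paper simply says that case is a direct adaptation of Lemma~\ref{lem:density} with Lemma~\ref{lem:thm3bis} replacing Theorem~\ref{thm:rrbilliard}). But there is a genuine gap when $D_\eps=\bar\A_\eps(q_\eps)$, and it is already visible in the types you are mixing: your set $F_a$ lives in $\bar M_\tau$ (three-dimensional), while $D_\eps$ is a subset of $\bar M$ (two-dimensional). The displayed inclusion $D_\eps\cap\{\bar W_{\bar\A_\eps(q_\eps)}\le\eps^{-1+\alpha}\}\subset D_\eps\cap F_a^c$ does not literally make sense; what your argument actually gives is that for $m'\in D_\eps$ with fast return one has $(m',s'(m'))\in F_a^c$, where $s'(m')$ is the hitting time of $B(q_\eps,\eps)$, not the fixed $s$. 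So you are intersecting $F_a^c$ with a two-dimensional graph in $\bar M_\tau$, and density of $(m,s)$ in $F_a$ with respect to three-dimensional balls says nothing about such slices.

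More fatally, when $D_\eps=\bar\A_\eps(q_\eps)$ the set is a thin tubular neighborhood of the curve $\bar\A_0(q_\eps)$: it has $\bar\mu$-measure $\frac{2\pi\eps}{\Gamma}$ but diameter of order $1$, not of order $\eps$. Hence your ratio $\bar\mu(B((m,s),\diam D_\eps))/\bar\mu(D_\eps)$ blows up like $\eps^{-1}$, and the final displayed chain of inequalities does not tend to $0$. A Lebesgue density argument based on balls cannot handle this $D_\eps$.

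The paper circumvents this by a Fubini argument in $X=Q\times S^1$ rather than a density argument in $\bar M_\tau$. One fixes $\eta>0$, defines for each position $q'$ the set $Bad(q')\subset S^1$ of velocities with a sticky $\overline{\mathcal Z}_\eps$, and lets $F_\eta(\eps_0)\subset Q$ be the set of positions where $Leb_{S^1}(Bad(q'))\le\eta$. One then uses that $\Pi_Q(\Phi_s(\psi(m)))$ is a density point of $F_\eta(\eps_0)$ in $Q$ (this is where the density argument lives, in two spatial dimensions), bounds $Leb_X$ of the bad set inside $B(q_\eps,2\eps)\times S^1$ by $\eta\cdot Leb_Q(B(q_\eps,2\eps))+o(\eps^2)$, and finally transfers this back to $\bar\mu(\bar\A_\eps(q_\eps)\cap\{\bar W_{\bar\A_\eps(q_\eps)}\le\eps^{-1+\alpha}\})$ via the special-flow representation, using that each visit to $B(q_\eps,2\eps)$ has length $\ge\eps$. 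This last step is exactly what compensates for $\bar\A_\eps(q_\eps)$ being long and thin, and it has no analogue in your proposal.
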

\begin{proof}
We do not detail the proof when 
$D_\eps$ is a ball since it is a direct
adaptation of the proof of Lemma~\ref{lem:density}
with the use of Lemma~\ref{lem:thm3bis} instead
of Theorem~\ref{thm:rrbilliard}.

We suppose that $D_\eps=\bar\A_\eps(q_\eps)$.
The idea is to consider the billiard flow
modulo ${\mathbb Z}^2$ and to adapt the proof of
Lemma~\ref{lem:density} thanks to the Fubini 
theorem.

Let $\alpha>0$ and let $a\in(0,\alpha)$. Let $\eta>0$ and $\eps_0>0$. We set for all $q'$ in $Q$
\[
Bad(q')=\left\{v\in S^1\colon \exists\eps\le\eps_0,\
    \frac{\log\overline{\mathcal Z}_\eps(q',v)}{-\log\eps}
     <1-a\right\}
\]
and
$$
F_\eta(\eps_0)=\{q'\in Q\colon
    {Leb}_{S^1}(Bad(q'))\le\eta\}. 
$$
Let $m$ and $s$ be such that $\Pi_Q(\Phi_s(\psi(m)))$
is a density point 
in $Q$ of the set $F_\eta(\eps_0)$ with respect to the Lebesgue
basis of balls in $Q$. We have
\begin{equation}\label{eq:feta}
\lim_{\eps\to0} \sup_{q_\eps} Leb_Q(F_\eta(\eps_0)^c|B(q_\eps,2\eps))=0
\end{equation}
where the supremum is taken among all the $q_\eps$ satisfying the hypothesis. We observe that $F_\eta(\eps_0)$ is stable by $\ZZ^2$-translations and that,
for all $\eta>0$, $\lim_{\eps_0\rightarrow 0}
   {Leb}_Q((Q\cap [0;1)^2)\setminus F_\eta(\eps_0))=0$.
   Therefore, for a.e. $(m,s)$ and any $\eta>0$ there exists a choice of $\eps_0$ such that $\eqref{eq:feta}$ holds.
Let 
\[
H_\eps:=
(B(q_\eps,2\eps)\times S^1) \cap
 \bigcup_{s\in(6\eps(\tau_+)\eps^{-1+\alpha})}
   \Phi_{-s}\left(\bigcup_{\ell\in{\mathbb Z}^2}
    B(q_\eps+\ell,2\eps)\times S^1\right).
\]
There exists $\eps_1\in(0,\eps_0)$ such that, for all
$\eps\in(0,\eps_1)$, we have
\begin{eqnarray*}
 H_\eps  &\subset&  (B(q_\eps,2\eps)\times S^1)\cap
      \{\overline{\mathcal Z}_{4\eps}\le
       \tau_+\eps^{-1+\alpha}\}\\
   &\subset&
     \{(q',v)\in B(q_\eps,2\eps)\times S^1\colon
        v\in Bad(q')\}.
\end{eqnarray*}
Therefore
\begin{eqnarray*}
Leb_X(H_\eps)&=&Leb_X(\Pi_Q^{-1}(F_\eta(\eps_0))
\cap H_\eps)
+Leb_X(H_\eps\setminus\Pi_Q^{-1}(F_\eta(\eps_0)))\\
&\le&\eta Leb_Q(B(q_\eps,2\eps))+2\pi 
   Leb_Q(B(q_\eps,2\eps)
   \setminus F_\eta(\eps_0)).
\end{eqnarray*}
This together with~\eqref{eq:feta} yields to
\[
\limsup_{\eps\rightarrow 0}
   Leb_X(H_\eps\vert B(q_\eps,2\eps)\times S^1)
   \le\frac{\eta}{2\pi}.
\]
Since $\eta>0$ is arbitrary, for almost every $(m,s)$, we get
\[\lim_{\eps\rightarrow 0}
   Leb_X(H_\eps\vert B(q_\eps,2\eps)\times S^1)
   =0.\]
Hence
\[Leb_X(H_\eps\cap(B(q_\eps,2\eps)\times S^1))
   =o(\eps^2). \]
Moreover, setting $I_s(m)=\mbox{length}\{s\in(0;\tau(m))\colon
 \Phi_s(m)\in B(q_\eps,2\eps)\times S^1\}$
and using the representation of $\Phi_s$
as a special flow over $T$ gives
\begin{eqnarray*}
Leb_X(H_\eps\cap(B(q_\eps,2\eps)\times S^1))
&\ge&
\int_{\bar \A_{2\eps}(q_\eps)
 \cap\{\bar W_{\bar \A_{2\eps}(q_\eps)}\le \eps^{-1+\alpha}\}}
\gluuu I_s(m) \, d\mu(m)\\
&\ge&\int_{\bar \A_{\eps}(q_\eps)
 \cap\{\bar W_{\bar \A_{\eps}(q_\eps)}\le \eps^{-1+\alpha}\}}
\gluuu I_s(m) \, d\mu(m)\\
&\ge&\eps\mu(\bar \A_\eps(q_\eps)\cap\{\bar W_{\bar \A_\eps
   (q_\eps)}\le\eps^{-1+\alpha}\})
\end{eqnarray*}
This finally gives
$$\bar\mu(\bar \A_\eps(q_\eps)\cap\{\bar W_{\bar \A_\eps
   (q_\eps)}\le\eps^{-1+\alpha}\})=o(\eps)
   =o(\bar\mu(\bar \A_\eps(q_\eps))).$$
\end{proof}
We denote by ${\mathcal NS}'$ the set of
couples $(m,s)\in \bar M_\tau$ satisfying the conclusion of Lemma~\ref{lem:lem9bis}.
This is essential for the following lemma analogous to Proposition~\ref{pro:upperbound}
\begin{lemma}\label{lem:prop10bis}
For all $(m,s)\in\mathcal{NS}'$, 
there exists a function $f_{m,s}$
such that $\lim_{\varepsilon\rightarrow 0}
f_{m,s}(\varepsilon)=0$ and such that,
for any families $(q_\eps)_\eps$ of $Q$
and $(D_\eps)_\eps$ of subsets of $M$ such that~:
\begin{itemize}
\item[(i)] $m\in D_\eps\subset {\mathcal A}_\eps(q_\eps) $;
\item[(ii)] $\Phi_s\psi(m)\in B(q_\eps,\eps)\times
S^1$;
\item[(iii)] $D_\eps$ is a ball of radius larger
than $\eps^{1.2}$ or is the set
${\mathcal A}_\eps(q_\eps)$;
\end{itemize}
for all $N\in(e^{\log^2\eps},e^{\frac{1}
{\eps^{2.5}}})$, we have~:
\[
\left\vert \mu(W_{{\mathcal A}_\eps(q_\eps)}
(\cdot)>N|D_\eps) -
\frac{1}{1+\log(N)\bar\mu(\bar{\mathcal A}_\eps
   (q_\eps))\beta}\right\vert\le
 f_{m,s}(\eps)
\]
and
\[
\mu(W_{{\mathcal A}_\eps(q_\eps)}(\cdot)>N
 |{\mathcal A}_\eps(q_\eps)) = 
\frac{1+o_\eps(1)}{1+\log(N)\bar\mu(
\bar{\mathcal A}_\eps(q_\eps))\beta},
\]
where the error term $o_\eps(1)$ is bounded
by $f_{m,s}(\varepsilon)$.
\end{lemma}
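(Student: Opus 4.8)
The plan is to mimic the route used for the extended billiard map, namely to derive Lemma~\ref{lem:prop10bis} from an analogue of Proposition~\ref{pro:upperbound} (hence of Lemmas~\ref{lem:upper} and~\ref{lem:lower}), but with $\A_\eps(q_\eps)$ playing the role of the ball $A_\eps$ and $\bar\A_\eps(q_\eps)$ the role of its projection to $\bar M$. The crucial point is that although $\A_\eps(q_\eps)$ is \emph{not} a ball of $M$, it is geometrically a nice set: it is the backward projection of $B(q_\eps,\eps)\times S^1$, so by the regularity of $\psi$ and $\pi$ away from the singular set it is contained in a $c_2\eps$-ball, it is comparable to a finite union of graphs of uniformly Lipschitz functions $\varphi=\varphi_j(r)$, and it satisfies the boundary regularity bound $\bar\mu(\partial \A_\eps^{[\eta]})\le c_3\eta$ with constants uniform in $\eps$ and in $q_\eps$ ranging over the relevant compact region. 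By Lemma~\ref{lem:mesAeps} we have the exact measure $\bar\mu(\bar\A_\eps(q_\eps))=\frac{2\pi\eps}{\Gamma}$, which is of order $\eps$ rather than $\eps^2$; this is harmless for Proposition~\ref{pro:cullt} (it only makes the main term larger relative to the error), and it is exactly why the return time here is of order $\exp(1/\eps)$.

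First I would state and prove the Dvoretzky--Erd\H{o}s-type upper bound: partitioning $D_\eps$ according to the last entrance time $q$ into $\A_\eps(q_\eps)$ with $S_q\kappa=0$, approximating $D_\eps$, $\A_\eps(q_\eps)$ and $\{W_{\A_\eps}>N\}$ from the inside by unions of cylinders from $\xi_{-k}^k$ (with $\delta^k\approx\eps^3$), and applying Proposition~\ref{pro:cullt}. The boundary estimates control the cylinder approximation errors, the summation $\sum_{q=p_0}^N \frac{\beta}{q-2k}\approx\beta\log N$ produces the main term, and the error $\sum \frac{ck}{(q-2k)^{3/2}}=O(k/\sqrt{p_0})$ is negligible against $\bar\mu(D_\eps)\gtrsim\eps^{1.2}$ provided $p_0\approx\eps^{-a}$ with $a$ large enough (now $a>2\times1.2=2.4$, say $a=2.5$). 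This gives
\[
\bar\mu(W_{\A_\eps}>N|D_\eps)+\beta\log(N)\bar\mu(\bar\A_\eps)\,\bar\mu(W_{\A_\eps}>N|\A_\eps)\le 1+o_\eps(1).
\]
Then I would prove the matching lower bound exactly as in Lemma~\ref{lem:lower}: use $n_N=N\log N$, split the last-entrance sum into the $q=0$ term, the range $q\le M_\eps:=\eps^{-1+\alpha}$ (negligible by the non-sticky property of $(m,s)$, i.e.\ the conclusion of Lemma~\ref{lem:lem9bis}), the main range up to $m_N=n_N-N$, and the tail $[m_N,n_N]$ (negligible since $\log(n_N/m_N)=o(1)$). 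The exponents must be checked: $\bar\mu(\bar\A_\eps)\approx\eps$, $\bar\mu(D_\eps)\gtrsim\eps^{1.2}$, and one needs the Proposition~\ref{pro:cullt} error in the main range, of size $O\!\big(k\,\bar\mu(\A_\eps)^{1/p}/\sqrt{M_\eps}\big)=O(\log(\eps)\,\eps^{1/p}\,\eps^{(1-\alpha)/2})$, to be $o(\bar\mu(D_\eps))$; this holds for $\alpha\in(0,0.2)$ and $p$ chosen with $1/p+(1-\alpha)/2>1.2$, which is exactly why hypothesis~(iii) asks for balls of radius at least $\eps^{1.2}$. Combining the two bounds gives the identity
\[
\bar\mu(W_{\A_\eps}>N|D_\eps)+\beta\log(N)\bar\mu(\bar\A_\eps)\,\bar\mu(W_{\A_\eps}>N|\A_\eps)=1+o_\eps(1),
\]
and then the self-consistency trick (first take $D_\eps=\A_\eps(q_\eps)$ to solve for $\bar\mu(W_{\A_\eps}>N|\A_\eps)$, then feed it back in) yields both displayed formulas. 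Passing from $\mu$ to $\bar\mu$ uses $\mu|_{\bar M}=2\Gamma\bar\mu$ and the fact that $W_{\A_\eps(q_\eps)}$ and $\bar W_{\bar\A_\eps(q_\eps)}$ agree by $\ZZ^2$-periodicity, so one may compute in $\bar M$ throughout; the function $f_{m,s}$ is obtained, as in Proposition~\ref{pro:upperbound}, by making the non-sticky convergence rate in Lemma~\ref{lem:lem9bis} explicit together with the (uniform) geometric constants.

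The main obstacle is the replacement of "ball" by the set $\A_\eps(q_\eps)$: one must verify, carefully and with constants uniform in $q_\eps$ over the compact region under consideration, that $\A_\eps(q_\eps)$ (and the balls $D_\eps\subset\A_\eps(q_\eps)$) satisfy the boundary-regularity and measure lower bounds that were hypotheses (iii)--(iv) of Proposition~\ref{pro:upperbound}, so that the cylinder approximations and Proposition~\ref{pro:cullt} can be applied verbatim. This is where the transversality of the curves $\gamma_1=T(\overline{\mathcal A}_0(q))$ to the stable/unstable cone structure — already exploited in the proof of Sub-Lemma~\ref{lem:sslemma} — is used to guarantee that $\A_\eps(q_\eps)$ is a controlled union of Lipschitz graphs and hence that $\partial\A_\eps(q_\eps)^{[\eta]}$ has measure $O(\eta)$. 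Everything else is a faithful, if bookkeeping-heavy, transcription of the arguments in Section~\ref{sec:bilext} with the exponent $2$ systematically replaced by $1$.
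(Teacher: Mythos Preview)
Your overall strategy is right and matches the paper's, but there is a real gap in the lower-bound step when $D_\eps$ is a ball.

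You write that the Proposition~\ref{pro:cullt} error over the main range is $O\bigl(k\,\bar\mu(\bar\A_\eps)^{1/p}/\sqrt{M_\eps}\bigr)=O(\log(\eps)\,\eps^{1/p}\,\eps^{(1-\alpha)/2})$ and claim it is $o(\bar\mu(D_\eps))$ because $\bar\mu(D_\eps)\gtrsim\eps^{1.2}$. But a ball of radius $\eps^{1.2}$ in $\bar M$ has measure of order $\eps^{2.4}$, not $\eps^{1.2}$. You therefore need $1/p+(1-\alpha)/2>2.4$, which is impossible since $1/p<1$ and $(1-\alpha)/2<1/2$. Enlarging $M_\eps$ does not help either: the non-sticky input from Lemma~\ref{lem:lem9bis} only controls $\bar W_{\bar\A_\eps}\le\eps^{-1+\alpha}$, so $M_\eps$ cannot exceed $\eps^{-1+\alpha}$. (Your argument does go through when $D_\eps=\A_\eps(q_\eps)$, since then $\bar\mu(D_\eps)\approx\eps$.)

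The paper resolves this by splitting the main range at an intermediate scale $M'_\eps=\eps^{-6}$. On $[M'_\eps,m_N]$ the direct estimate works because $1/\sqrt{M'_\eps}=\eps^{3}$ is small enough. On the short range $[M_\eps,M'_\eps]$ one first drops the event $\{W_A>N\}$ (harmless, since $\log(M'_\eps/M_\eps)\,\bar\mu(\bar\A_\eps)=O(\eps\log(1/\eps))=o(1)$) and then applies the time-reversal symmetry $\pi_0(r,\varphi)=(r,-\varphi)$, which conjugates $\bar T$ to $\bar T^{-1}$ and swaps the roles of $D$ and $A$ in Proposition~\ref{pro:cullt}. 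The error then carries $\bar\mu(D)^{1/p}$ instead of $\bar\mu(\bar\A_\eps)^{1/p}$, and the required inequality becomes $(1-\alpha)/2>2.4(1-1/p)$, which holds for $p$ close to $1$. This $\pi_0$-trick is the missing idea.

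Two smaller points. First, $\A_\eps(q_\eps)$ is not contained in a $c_2\eps$-ball: it is an $O(\eps)$-tube around a curve of length $O(1)$ and, more importantly, it spreads over several cells, $\A_\eps(q_\eps)=\bigcup_{|\ell|\le\tau_+}(A_\ell+\ell)$. Consequently $W_{\A_\eps(q_\eps)}$ does \emph{not} coincide with $\bar W_{\bar\A_\eps(q_\eps)}$; the Dvoretzky--Erd\H{o}s decomposition produces a double sum over $\ell,\ell'$ with the constraint $S_q\kappa=\ell-\ell'$ rather than $S_q\kappa=0$, and Proposition~\ref{pro:cullt} must be applied piecewise to the $D_{\ell'}$ and $A_\ell$. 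Second, the boundary regularity of $\A_\eps(q_\eps)$ has nothing to do with the transversality used in Sub-Lemma~\ref{lem:sslemma}; it follows directly from the fact that the pieces $A_\ell$ are images under the (piecewise smooth) backward projection of round balls.
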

\begin{proof}
To simplify the proof, we use the notations
$A={\mathcal A}_\eps(q_\eps)$ and
$\bar A=\bar{\mathcal A}_\eps(q_\eps)$.

\underline{First step}~:
We adapt the proof of Lemma~\ref{lem:upper} 
to prove that
\[
\mu(W_A>N|D)+\beta\log(N)\bar\mu(\bar A)
\mu(W_A>N|A)\le 1+o_\eps(1).
\]
A slight difficulty comes from the fact that
the set $A$ can be divided into several cells.
More precisely, there exist pairwise
disjoint subsets $A_\ell$ of 
$\bar M$ such that (with obvious notations)
\[
A=\bigcup_{\vert\ell\vert\le\tau_+}
  (A_\ell+\ell) \ \mbox{and}
   \ \bar A=\bigcup_{\vert \ell \vert\le \tau_+}
   A_\ell.
\]
Analogously, there exist pairwise
disjoint subsets $ D_\ell$ of 
$\bar M$ such that
\[
D=\bigcup_{\vert\ell\vert\le\tau_+}
  ( D_\ell+\ell).
\]
Hence, we have
\[
\begin{split}
\mu(D) &= \sum_{q=0}^N\mu(D;\ 
   T^{-q}(A;W_A>N-q))\\
   &\ge \mu(D;W_A>N)+ \sum_{q=p_0}^N\mu(D;\ 
   T^{-q}(A;W_A>N))\\
&\ge \mu(D;W_A>N)+ \sum_{q=p_0}^N \sum_{\ell,\ell'}
   \mu(D_{\ell'}+\ell';\ 
   T^{-q}(A_\ell+\ell;W_A>N))\\
&\ge \mu(D;W_A>N)+  \sum_{q=p_0}^N \sum_{\ell,\ell'}
   \mu(D_{\ell'}; S_q\kappa=\ell-\ell';
   \bar T^{-q}(A_\ell;W_{A-\ell}>N)).
\end{split}
\]
This together with~\eqref{eq:const}, 
as in the proof of Lemma~\ref{lem:upper}, give
\[
\mu(D)\ge  \mu(D;W_A>N)+  
\beta\log(N)\frac{\mu(D)}{2\Gamma}\mu( A;W_A>N)+o(\mu(D))
\]
and so
\[
1\ge  \mu(W_A>N\vert D)+  
\beta\log(N){\bar\mu(\bar A)}\mu(W_A>N\vert A)
+o(1).
\]

\underline{Second step}~:
To prove the following lower bound
\[
\mu(W_A>N|D)+\beta\log(N)\bar\mu(\bar A)
\mu(W_A>N|A)\ge 1+o_\eps(1),
\]
we use the notations $m_N$ and $n_N$ of the proof 
of Lemma~\ref{lem:lower} and we write
\[
\begin{split}
\mu(D)&=  \sum_{q=0}^{n_N}\mu(D;\ 
   T^{-q}(A;W_A>n_N-q))\\
&= \mu(D;W_A>N)+\sum_{q=1}^{n_N}
\sum_{\ell,\ell'}
\mu(D_{\ell'};\ S_q\kappa=\ell-\ell';
\bar T^{-q}(A_\ell;W_{A-\ell}>N)).
\end{split}
\]
A first difference with the proof
of lemma~\ref{lem:lower} is that we work
with $D_{\ell'}$ and $A_\ell$ instead of
considering directly $D$ and $A$.
We approximate $D_{\ell'}$ by a set
$D_{\ell'}''$ and $A_\ell$ by a set
$A_\ell''$ as we approximate $D$
by $D''$ in the proof of Lemma~\ref{lem:lower}.

We fix $\alpha\in(0,0.5)$ and
we follow the scheme of the proof of Lemma~\ref{lem:lower} 
for the estimate of $S_0$ and
$S_3$ (using $D_{\ell'}''$ and $A_{\ell}''$). 
We take $M_\eps=\eps^{-1+\alpha}$
instead of $M_\eps=\eps^{2(-1+\alpha)}$.
According to Lemma~\ref{lem:lem9bis},
this choice of $M_\eps$ gives the correct 
estimate of $S_1$. We introduce $M'_\eps=\eps^{-6}$.
We decompose $S_2$ in two blocks~: $S'_2$
is the sum for $q$ in the range $M_\eps+1,...,M'_\eps$
and $S''_2$ in the range $M'_\eps+1,...,m_N$.

To estimate $S'_2$ and $S''_2$, we
approximate
$E_\ell:=A_\ell\cap\{W_{A-\ell}>N\}$ by a set
$E_\ell''$
as we approximate $E$ by $E''$ in the proof of
Lemma~\ref{lem:lower}.

We estimate $S''_2$ as we estimate $S_2$
in the proof of Lemma~\ref{lem:lower}
with $M'_\eps$ instead of $M_\eps$:
$$S''_2\le \log\left(\frac{m_N}{M'_\eps}\right)
  \beta\frac{\mu(D)\mu(E)}{2\Gamma}(1+o(1))+\frac
  {ck\bar\mu(A)^{\frac{1}{p}}}{\sqrt{M'_\eps
  -2k}} +o(\mu(D))$$
and the error term is in $O(\log(\eps)\eps^{1/p}
  \eps^{3})=o(\mu(D))$ provided
$3+1/p>2.4$.

To estimate $S'_2$, we use the symmetry $\pi_0$ on $M$ 
with respect
to the normal $n$ given by~:
$\pi_0(\psi(\ell,i,r,\varphi))=
    \pi_0(\psi(\ell,i,r,-\varphi))$.
Let us notice that $\pi_0$ preserves $\bar\mu$.
Using this symmetry and applying Proposition~\ref{pro:cullt} with $p$ such that 
$1/4>2.4(1-1/p)$, we get
\begin{eqnarray*}
S'_2&\le& 2\Gamma
  \sum_{q=M_\eps}^{M'_\eps}\sum_{\ell,\ell'}
  \bar\mu(D''_{\ell'},S_q\kappa=\ell-\ell';
\bar T^{-q}(A_\ell''))\\
&\le& 2\Gamma
\sum_{q=M_\eps}^{M'_\eps}\sum_{\ell,\ell'}
\bar\mu(\pi_0(A_\ell'');S_q\kappa=\ell'-\ell;
 \bar T^{-q}
(\pi_0(D_{\ell'}'')))\\
&\le&2\Gamma
 \sum_{q=M_\eps}^{M'_\eps}\sum_{\ell,\ell'}
\left[\frac{\beta\bar\mu(A_{\ell}'')\bar\mu(D
     _{\ell'}'')}
   {q-2k}+\frac{ck\bar\mu(D_{\ell'}'')^{1/p}}
    {(q-2k)^{3/2}}\right]\\
&\le&\log\left(\frac{M'_\eps}{M_\eps}\right)
\beta\bar\mu(\bar A)\mu(D)(1+o(1))
   +\frac{c'k\mu(D)^{1/p}(1+o(1))}
    {\sqrt{M_\eps-2k}},
\end{eqnarray*}
the last error term being in 
$O\left(\log(\eps)\bar\mu(D)^{1/p}\eps^{(1-\alpha)/2}
\right)
=o(\bar\mu(D))$ since $(1-\alpha)/2>2.4(1-1/p)$.
Hence, we have proved that, under the assumptions of
Lemma~\ref{lem:prop10bis}, we have
\begin{equation}\label{eq:E1}
\mu(W_A>N\vert D)+\beta\log(N)\bar\mu(\bar A)\mu
(W_A>N\vert A)=1+o(1).
\end{equation}
In the special case $D=A$, we conclude that
\begin{equation}\label{eq:E2}
\mu(W_A>N\vert A)=\frac{1+o(1)}
{1+\beta\log(N)\bar\mu(\bar A)}.
\end{equation}
We turn now to the general case. Applying Equations~\eqref{eq:E1} and~\eqref{eq:E2} we get
$$\mu(W_A>N\vert D)=\frac{1} {1+\beta
     \log(N)\bar\mu(\bar A)} +o(1). $$
\end{proof}

\begin{proof}[Proof of Theorem~\ref{thm:main}-(iii)]

\underline{Upper bound~:}
Let $\bar X_0$ be a set of points of $X$ 
with previous reflection in $\bar M$ and on which
the estimate of Lemma~\ref{lem:prop10bis} is uniform.
Let $\alpha\in(0,1)$ and $\varepsilon_n=\log^{-\alpha}n$. 
Take a cover of $\bar X_0$ by some balls $B(q_n,\frac{\eps_n}2)\times S^1$
for $q_n\in{\mathcal Q}_n\subseteq Q$ such that
$\#{\mathcal Q}_n=O(\eps_n^{-2})$.
We have
\begin{eqnarray*}
Leb(\bar X_0; \Z_{\frac{\eps_n}2}\ge n\tau_+)
&\le&\sum_{q_n}Leb(B(q_n,\frac{\eps_n}2) ; \Z_{\frac{\eps_n}2}\ge n\tau_+)\\
&\le&\sum_{q_n}\varepsilon_n
 \mu(W_{{\mathcal A}_{\frac{\eps_n}2}(q_n)}>n;
    {\mathcal A}_{\frac{\eps_n}2}(q_n))\\
&\le&\varepsilon_n\sum_{q_n}
\mu\left(W_{{\mathcal A}_{\frac{\eps_n}2}(q_n)}>n\Big|
    {\mathcal A}_{\frac{\eps_n}2}(q_n)\right)
    \mu(   {\mathcal A}_{\frac{\eps_n}2}(q_n))\\
&\le&O((1+\beta c\log(n)\varepsilon_n)^{-1})\\
&\le&O((1+\beta c\log^{1-\alpha}(n))^{-1}),
\end{eqnarray*}
with $c=\frac{2\pi}{\Gamma}$ (according to Lemma~\ref{lem:mesAeps}).
Now, by taking $n_k=\exp(k^{2/(1-\alpha)})$ and according
to the Borel-Cantelli lemma,
we get that, for almost all $x$ in $\bar X_0$, there exists $N_x$ such that, for any
$k\ge N_x$, $\Z_{\frac{\eps_{n_k}}2}(x)< {n_k}\tau_+$ and hence
$$\limsup_{k\rightarrow +\infty} \frac{\log\log \Z_{\varepsilon_{n_k}}(x)}{-\log\varepsilon_{n_k}}\le\frac{1}{\alpha}.
$$
Since $\log\varepsilon_{n_k}\sim{\log\varepsilon_{n_{k+1}}}$, we 
conclude that almost everywhere in $\bar X_0$, we have~: 
$$\limsup_{\eps\rightarrow 0} 
\frac{\log\log \Z_{\eps}}{-\log\eps}\le \frac {1}
{\alpha}.$$
Therefore, almost everywhere in $X$, we have
 $$\limsup_{\eps\rightarrow 0} 
   \frac{\log\log \Z_{\eps}}{-\log\eps}\le 1.$$
 
\underline{Lower bound~:}
Let $\bar X$ be the set of points of $X$ with 
previous reflection in $\bar M$. 
Let $\alpha>1$. For all $n\ge 1$, we take
$\eps_n=\log^{-\alpha}n$ and we denote by
$K_n$ the set of points $x\in \bar X$ whose orbit
(by the billiard flow) comes back to the 
$\varepsilon_n$-neighbourhood for the position between
the $n^{th}$ and the $(n+1)^{th}$ reflections~:
$$K_n=\left\{x\in \bar X \colon 
\exists s\in I_n(x) ,\ 
   d(\Pi_Q(x),\Pi_Q(\psi(T^n(\pi\psi^{-1}(x)),s)))<
    \eps_n\right\} ,$$
with $I_n(x):=[0;\tau(T^n(\pi\psi^{-1}(x))))$.
We consider a cover of $\bar X$ by 
sets $C_{\varepsilon_n}(q)=B(q,{\varepsilon_n})
\times S^1$ for $q\in{\mathcal Q}'_n\subseteq Q$ 
such that $\#{\mathcal Q}'_n=O( \eps_n^{-2})$.
Let $n\ge 1$.
For any $q\in{\mathcal Q}_n$, there exist two
families of pairwise
disjoint subsets $ (A_{1,\ell}(q))_\ell $ and 
$(A_{2,\ell}(q))_\ell$
of $\bar M$ such that~:
$$ {\mathcal A}_{\varepsilon_n}(q)
   =\bigcup\left( A_{1,\ell'}(q)+\ell' \right)
\ \ \mbox{and}\ \ \
   {\mathcal A}_{2\varepsilon_n}(q)
   =\bigcup\left( A_{2,\ell}(q)+\ell \right).$$
Let $k$ be such that $\delta^k\approx\eps_n^5$.
Let $A''_{1,\ell'}(q)$ (resp. $A''_{2,\ell}(q)$) be 
the union of all the cylinders $Z\in{\mathcal
  Z}_{-k}^{k}$ intersecting $A_{1,\ell}(q)$ 
(resp. $A_{2,\ell}(q)$). We have~: 
\begin{eqnarray*}
Leb(K_n)&\le&\sum_{q}
Leb(x\in C_{\eps_n}(q)\colon T^n(\pi\psi^{-1}(x))\in 
\A_{2\eps_n}(q))\\
&\le& 2\eps_n \sum_{q} \sum_{\ell,\ell'}
\mu(A_{1,\ell'}(q)+\ell';\ 
T^{-n}(A_{2,\ell}(q)+\ell))\\
&\le& 2\eps_n \sum_{q} \sum_{\ell,\ell'}
\mu(A_{1,\ell'}(q);\ S_n\kappa=\ell-\ell';\ 
\bar T^{-n}(A_{2,\ell}(q)))\\
&\le& 4\eps_n\Gamma
\sum_{q} \sum_{\ell,\ell'}
\bar\mu(A''_{1,\ell'}(q);\ S_n\kappa=\ell-\ell';\ 
\bar T^{-n}(A''_{2,\ell}(q)))\\
&\le& 4\eps_n  \Gamma
\sum_{q} \sum_{\ell,\ell'}
\left[\beta\frac{\bar\mu(A''_{1,\ell'}(q))
\bar\mu(A''_{2,\ell}(q))}{n-2k}+\frac{ck}
    {(n-2k)^{3/2}}\right]\\
&\le& \frac{\eps_n}
\Gamma
\sum_{q}
\left[\beta\frac{\mu(\A_{\eps_n}(q))
\mu(\A_{2\eps_n}(q))}{n-2k}(1+o(1))\right]+
   O(\eps_nn^{-1})\\
&\le& O(\eps_nn^{-1})=O(n^{-1}\log^{-\alpha}n).
\end{eqnarray*}
Hence, according to the first Borel Cantelli lemma,
for almost every $x\in \bar X$,
there exists $N_x$ such that, for all $n\ge N_x$,
for every $s\in I_n(x)$, we have
$$d(\Pi_Q(x),\Pi_Q(\psi(T^n(\pi\psi^{-1}(x)),s)))\ge
    \eps_n.$$
According to Lemma~\ref{lem:sslemma},
\[
u:=\min\left(d(\Pi_Q(x),\Pi_Q(\psi(T^n(\pi\psi^{-1}(x)),s)),\
   n=1,...,N_x,\ s\in I_n(x)\right)
\]
is almost surely non-null.
Therefore, for almost every point $x$ in $\bar X$,
for all $n\ge N_x$ such that $\eps_n<u$,
$\Z_{\eps_n}(x)\ge (n-1)\tau_-$. 
Hence, almost everywhere in $X$, we have
$$\liminf_{n\rightarrow +\infty}\frac{\log\log
    Z_{\eps_n}}{-\log\eps_n}\ge \alpha^{-1}.$$
Since $\log \eps_n\sim\log\eps_{n+1}$, we have
$\liminf_{\eps\rightarrow 0}
\frac{\log\log{\mathcal Z}_{\eps}}
{-\log\eps}\ge \alpha^{-1}$.
Therefore, almost everywhere in $X$, we have
$$\liminf_{\eps\rightarrow 0}\frac{\log\log
    \Z_{\eps}}{-\log\eps}\ge 1.$$
\end{proof}

\begin{proof}[Sketch of proof of Theorem~\ref{thm:main}-(iv)]
This result is obtained by following the same scheme as in the proof of Theorem~\ref{thm:main}-(ii) in Section~\ref{sec:proofmain}. We list the differences:
\begin{itemize}
\item
The set $K\subset \mathcal{NS}\subset \bar M$ is replaced by a set $\mathcal{K}\subset \mathcal{NS}'\subset\bar M_\tau$ such that the convergence in Lemma~\ref{lem:prop10bis} is uniform and such that 
\[
\PP( \psi(\mathcal{K}) )> 1-r.
\]
\item
The family $P_{ij}$:
we first take a family of pairwise disjoint balls $D_i$ of $\bar M$ of radius $\nu_\eps$ such that their union has $\bar\mu$-measure larger than $1-4\nu_\eps$.
We construct the $P_{ij}$'s exactly as in Section~\ref{sec:proofmain}.
Finally we drop the $P_{ij}$'s not intersecting $\mathcal{K}\cap \bar X_r$.
We choose $y_{ij}\in P_{ij}\cap \psi(\mathcal{NS}')$.
\item
The sets $A_{ij}^\pm$ are replaced by 
${\mathcal A}_{ij}^\pm:=\mathcal{A}_{\eps\pm\nu_\eps}(\Pi_Q(y_{ij}))$.
\item
We use the formula for the measure of the ${\mathcal A}_{ij}^\pm$ given by Lemma~\ref{lem:mesAeps}.
\end{itemize}
\end{proof}


\appendix 

\section{Transfer operator and local limit theorem}

\subsection{Hyperbolicity, Young towers and spectral properties of the transfer operator}\label{sec:pf}

We do not repeat the construction of stable and unstable manifolds but only emphasize the hyperbolic estimate that is used throughout the proofs.
Recall that $R_0=\{\varphi=\pm\frac{\pi}{2}\}\subset \bar M$ is the pre-singularity set.
For any $k_1\le k_2$, let $\xi_{k_1}^{k_2}$ be the 
partition of $\bar M\setminus\bigcup_{j=k_1}^{k_2}\bar T^{-j}(R_0)$ into connected components. With a slight abuse of language we will call \emph{cylinders} the elements of $\xi_{k_1}^{k_2}$.

\begin{lemma}\label{lem:bord} 
There exist some constants $c_0$ and $\delta>0$ such that 
for every integer $k$, every set $Z\in \xi_{-k}^k $ has a diameter $\diam Z\le c_0\delta^k$.
\end{lemma}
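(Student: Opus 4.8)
The plan is to derive the diameter bound from the uniform hyperbolicity of the billiard map $\bar T$ together with the standard regularity of the singularity set $R_0 = \{\varphi = \pm\frac\pi2\}$. Recall that, since the scatterers have $C^3$ boundaries with non-null curvature and the horizon is finite, the billiard map $\bar T$ has uniformly bounded expansion: there is a constant $\Lambda > 1$ and a cone field (the stable and unstable cones) such that vectors in the unstable cone are expanded by at least $\Lambda$ under $D\bar T$, and vectors in the stable cone are expanded by at least $\Lambda$ under $D\bar T^{-1}$; moreover these cones are uniformly transverse. The set $R_0$ together with all its forward and backward images $\bar T^{\pm j}(R_0)$ consists of finitely many smooth curves which are uniformly transverse to the unstable cone (the forward images) and to the stable cone (the backward images). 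This is the content of the usual theory of hyperbolic systems with singularities applied to billiards, which we take for granted here.

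First I would fix $Z \in \xi_{-k}^k$, so that $Z$ is a connected component of $\bar M \setminus \bigcup_{j=-k}^k \bar T^{-j}(R_0)$. By definition, $Z$ is bounded by pieces of the curves $\bar T^{-j}(R_0)$ for $-k \le j \le k$, together possibly with pieces of the boundary of $\bar M$. The key observation is that the boundary curves with $j \ge 1$ (the backward images of $R_0$, which are stable-like curves) cut the cell into pieces that are thin in the unstable direction, while those with $j \le -1$ (forward images, unstable-like curves) make it thin in the stable direction. Quantitatively: applying $\bar T^k$ to $Z$, every segment of the stable curves $\bar T^{-j}(R_0)$, $1\le j \le k$, becomes a segment of $\bar T^{k-j}(R_0)$ with $0 \le k-j \le k-1$, which is a genuine singularity line of the map; hence $\bar T^k Z$ has bounded diameter in the unstable direction (it is contained between two consecutive singularity lines, at bounded distance). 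Pulling back, $Z$ itself has unstable diameter at most $C\Lambda^{-k}$, because $\bar T^k$ expands unstable curves by at least $\Lambda^k$. Symmetrically, applying $\bar T^{-k}$ and using that $\bar T^{-k}$ expands stable curves by at least $\Lambda^k$, one gets that $Z$ has stable diameter at most $C\Lambda^{-k}$. Since the stable and unstable cones are uniformly transverse, the two one-dimensional bounds combine to give $\diam Z \le c_0 \delta^k$ with $\delta = \Lambda^{-1}$ and some $c_0$ depending only on the billiard.

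The main obstacle is making the heuristic "$Z$ is squeezed between consecutive singularity lines" precise, because the pieces of $\partial Z$ coming from $\bar T^{-j}(R_0)$ are not a priori graphs over the unstable (or stable) direction, and a connected component could in principle be long and curved rather than genuinely thin. The standard way around this is a distortion/alignment argument: iterate by $\bar T^k$ and track the unstable curves inside $\bar T^k Z$; using the invariance of the unstable cone and bounded curvature of unstable curves (the "growth lemma" type estimates for billiards), one shows these curves have length uniformly bounded above because they cannot cross any of the finitely many singularity lines of $\bar T^i$ for $0 \le i \le k-1$, and there is a uniform lower bound on the distance between the two sides of the cell in the relevant direction that would otherwise force a crossing. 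Then pulling back contracts lengths by $\Lambda^{-k}$. I would organise this as: (1) state the hyperbolicity and cone estimates; (2) show that the image $\bar T^k Z$ is "unstably short" and $\bar T^{-k}Z$ is "stably short"; (3) invoke uniform transversality of the cones to pass from the two directional bounds to a bound on the Euclidean diameter of $Z$; (4) collect the constants into $c_0$ and $\delta$.
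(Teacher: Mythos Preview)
Your overall strategy (hyperbolicity forces the cells to be exponentially thin in both the stable and unstable directions) is the right one, but there is a genuine gap in the key hyperbolicity claim, and the paper's argument is organised differently in a way that sidesteps it.

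\textbf{The gap.} You assert that ``vectors in the unstable cone are expanded by at least $\Lambda$ under $D\bar T$'' and then use $|\bar T^k\gamma|\ge\Lambda^k|\gamma|$ for unstable curves in the Euclidean metric on $\bar M$. For dispersing billiards this is \emph{false} in the $(r,\varphi)$ metric near the singularity $R_0=\{\varphi=\pm\pi/2\}$. An unstable curve $\gamma$ with $\varphi$ ranging over $[\pi/2-\epsilon,\pi/2]$ has Euclidean length $\approx\epsilon$ but p-length (the $\cos\varphi\,dr$ length, in which expansion \emph{is} uniform) only $\approx\epsilon^2$; its image under $\bar T$, if it lands away from $R_0$, has Euclidean length comparable to its p-length, hence $\approx\Lambda\epsilon^2$. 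The one-step Euclidean expansion factor is thus $\approx\Lambda\epsilon$, which is arbitrarily small. Your argument ``$\bar T^kZ$ has bounded unstable diameter, so $Z$ has unstable diameter $\le C\Lambda^{-k}$'' breaks exactly here.

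\textbf{How the paper handles it.} The paper uses instead the correct billiard estimate
\[
\length(\bar T^n\gamma)\ \ge\ C_0\,\Lambda_0^{\,n}\,\length(\gamma)^{2}
\]
for any increasing curve $\gamma$ lying in one atom of $\xi_0^n$ (and the analogous statement for decreasing curves and $\bar T^{-n}$). The square absorbs precisely the degeneracy of the Euclidean metric near $R_0$. Inverting gives $\length(\gamma)\le(\length(\bar T^n\gamma)/C_0\Lambda_0^n)^{1/2}\le c_0\Lambda_0^{-n/2}$, so $\delta=\Lambda_0^{-1/2}$.

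\textbf{A second difference.} Rather than bounding the stable and unstable diameters of $Z$ separately and then invoking transversality of the cones, the paper observes that an atom $Z\in\xi_{-k}^k$ is bounded by two increasing and two decreasing curves, so any two points of $Z$ can be joined by a single \emph{monotone} curve $\gamma\subset Z$. One then applies the quadratic expansion estimate directly to $\gamma$ (forward if $\gamma$ is increasing, backward if decreasing). This avoids your step~(3) entirely and makes the proof a few lines long. Your transversality route would also work once the expansion estimate is repaired, but it is longer and the ``squeezed between consecutive singularity lines'' part you flagged as the main obstacle becomes unnecessary.
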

\begin{proof}
We recall that there exists $C_0>0$ and 
$\Lambda_0>1$ such that, for any
increasing curve contained in a same connected
component of $\xi_{0}^k$, $T^n\gamma$ is 
an increasing curve satisfying
\[
\length(T^n\gamma)\ge C_0\Lambda_0^n{\length(\gamma)}
^2
\]
and such that, for any
decreasing curve contained in a same connected
component of $\xi_{-k}^0$, $T^{-n}\gamma$ is 
a decreasing curve satisfying
\[
\length(T^{-n}\gamma)\ge C_0\Lambda_0^n{\length(\gamma)}
^2.
\]

Let $Z$ be in $\xi_{-k}^k$ and be composed of 
points based on the same obstacle $O_i$. 
The set $Z$ is delimitated
by two increasing curves and two decreasing curves.
Let $m$ and $m'$ be two points in $Z$.
These two points can be joined by a monotonous curve
$\gamma$ in $Z$.

If the curve $\gamma$ is increasing,
then we have
\[
\length(\gamma)\le \sqrt{\frac {\length(T^n\gamma)}
   {C_0\Lambda_0^n}}
\le 
\sqrt{\frac {\pi+\vert\partial
   O_i\vert}{C_0\Lambda_0^n}}.
\]

If the curve $\gamma$ is decreasing,
then, considering $\bar T^{-n}\gamma$, we get
$\length(\gamma)\le\sqrt{\frac {\pi+\vert\partial
   O_i\vert}{C_0\Lambda_0^n}}$.
\end{proof}

We do not repeat the construction of the tower but only briefly recall its property and then introduce the Banach space suitable for the study of the transfer operator.
Young constructed in~\cite{young} two dynamical systems $(\tilde M,\tilde T, \tilde\mu)$ and $(\hat M, \hat T, \hat \mu)$ such that there exist two measurable functions $\tilde\pi\colon \tilde M\to \bar M$ and $\hat \pi\colon \tilde M\to \hat M$ such that $\tilde\pi\circ \tilde T=\bar T\circ \tilde\pi$, $\tilde\pi_*\tilde\mu=\bar\mu$, $\hat\pi\circ \tilde T=\hat T\circ \hat\pi$, $\hat\pi_*\tilde\mu=\hat\mu$.

These dynamical systems are towers and are such for any measurable $f\colon \bar M\to\CC$ constant on each stable manifold there exists $\hat f\colon \hat M\to\CC$ such that $\hat f\circ \hat\pi=f\circ \tilde\pi$.
For each $\ell\ge0$, we denote by $\hat\Delta_\ell$ the $\ell$th floor of the tower $\hat M$. This $\ell$-floor is partitioned in $\{\hat\Delta_{\ell,j}\colon j=1,\ldots,j_\ell\}$. The partition $\D=\{\hat\Delta_{\ell,j}\colon \ell\ge0,\ j=1,\ldots,j_\ell\}$ is Markov.
For any $x,y$ belonging to the same atom of $\D$, we define
\[
s(x,y):=\max\{n\ge 0\colon \forall i\le n,\ \D(\hat T^ix)=\D(\hat T^iy)\}.
\]
For any such $x,y$, the sets $\tilde\pi\hat\pi^{-1}\{x\}$ and $\tilde\pi\hat\pi^{-1}\{y\}$
are contained in the same connected component of $\bar M\setminus\bigcup_{k=0}^{s(x,y)}\bar T^{-k}R_0$.

Let $p>1$ and set $q$ such that $\frac1p+\frac1q=1$. Let $\eps>0$ and $\beta\in(0,1)$ well chosen. Young defines for $\hat f\in L^q_\CC(\hat M,\hat\mu)$
\[
\|\hat f\| = \sup_{\ell} \|\hat f_{|\hat \Delta_\ell}\|_\infty e^{-\ell\eps}
+
\sup_{\ell,j} \esssup_{x,y\in \hat\Delta_{\ell,j}} \frac{|\hat f(x)-\hat f(y)|}{\beta^{s(x,y)}}e^{-\ell \eps}.
\]
Let $\V=\{\hat f\in L^q_\CC(\hat M,\hat\mu)\colon \|\hat f\|<\infty\}$.
This defines a Banach space $(\V,\|\cdot\|)$, such that $\|\cdot\|_{q}\le \|\cdot\|$.
Let $P$ be the Perron-Frobenius operator on 
$L^q$ defined as the adjoint of the composition by $\hat T$ on $L^p$.
This operator $P$ is quasicompact on $\V$. The construction of the tower can be adapted in such a way that its dominating eigenvalue on $\V$ is $1$ and is simple. This choice will be convenient for our proof and we will adopt it, although it is not essential.

The cell shift function $\kappa$ is centered in the sense that
\[
\int \kappa d\bar\mu = 0
\]
and its asymptotic covariance matrix
\begin{equation}\label{eq:cov}
\Sigma^2 := \lim_{n\to\infty} \frac{1}{n} \cov_{\bar\mu}(S_n\kappa)
\end{equation}
is well defined and non-degenerated.
Since $\kappa\colon \bar M\to\ZZ^2$ is constant on the local stable manifolds, there exists $\hat \kappa\colon\hat M\to\ZZ^2$ such that $\hat \kappa\circ \hat \pi=\kappa\circ\tilde\pi$. For any $u\in\RR^2$, we define $P_u(\hat f)=P(e^{i u\cdot\hat\kappa}\hat f)$.
The method introduced by Nagaev~\cite{nag1,nag2} and developed by Guivarc'h and Hardy~\cite{gui} and many other authors has been applied in this context by Sz\'asz and Varj\'u~\cite{SV} (see also~\cite{pene1}). They have established the following key result:
\begin{proposition}\label{pro:pertu}
There exist a real $a\in(0,\pi)$, a $C^3$ family of complex numbers $(\lambda_u)_{u\in[-a,a]^2}$, 
two $C^3$ families of linear operators on $\V$: $(\Pi_u)_{u\in[-a,a]^2}$ and $(N_u)_{u\in[-a,a]^2}$ such that
\begin{itemize}
\item[(i)]
for all $u\in[-a,a]^2$ we have 
$P_u^n=\lambda_u^n \Pi_u + N_u^n$;
Moreover $\Pi_0\hat f=\int_{\hat M}\hat f\, d\hat\mu$ for any $\hat f\in L^q$;
\item[(ii)]
there exists $\nu\in(0,1)$ such that 
\[
\sup_{u\in[-a,a]^2} \|| N_u^n\|| = O(\nu^n)
\quad\text{and}\quad
\sup_{u\in[-\pi,\pi]^2\setminus[-a,a]^2} \|| P_u^n\|| = O(\nu^n);
\]
\item[(iii)]
we have $\lambda_u = 1-\frac12\Sigma^2u\cdot u = O(|u|^3)$;
\item[(iv)] there exists $\sigma>0$ such that,
for any $u\in[-a,a]^2$, 
$|\lambda_u|\le e^{-\sigma|u|^2} $ and
$e^{-\frac 12\Sigma^2u\cdot u}\le e^{-\sigma|u|^2} $.
\end{itemize}
\end{proposition}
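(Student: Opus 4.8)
The plan is to run the Nagaev--Guivarc'h perturbation method on the Banach space $\V$, along the lines already carried out for this billiard by Sz\'asz and Varj\'u~\cite{SV} (see also~\cite{pene1}); what follows is really a bookkeeping of that argument to produce the precise decomposition and estimates claimed. As a starting point I would record the unperturbed picture: by Young's construction and the adaptation made above, $P=P_0$ is quasi-compact on $(\V,\|\cdot\|)$ with a simple dominating eigenvalue equal to $1$, the rest of the spectrum lying in a disc of radius $<1$. Writing $\Pi_0$ for the rank-one spectral projection and $N_0=P_0(I-\Pi_0)$, we have $P_0^n=\Pi_0+N_0^n$ with $\|N_0^n\|$ decaying geometrically, and $\Pi_0\hat f=\int_{\hat M}\hat f\,d\hat\mu$ because $\hat\mu$ is $\hat T$-invariant. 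This already yields the $u=0$ part of (i).

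Next I would check that the perturbation is smooth. The finite horizon hypothesis forces the free flight, hence the cell shift $\kappa$ and its lift $\hat\kappa$, to be bounded; moreover $\kappa$ is locally constant (determined by finitely many symbols), so $\hat\kappa$ is constant on cylinders of bounded depth. Consequently the multiplication operator $\hat f\mapsto e^{iu\cdot\hat\kappa}\hat f$ is bounded on $\V$ with norm controlled uniformly in $u$, and $u\mapsto P_u=P(e^{iu\cdot\hat\kappa}\,\cdot\,)$ is real-analytic (in particular $C^3$) from $\RR^2$ into the bounded operators on $\V$, with $P_0=P$. Kato's analytic perturbation theory for an isolated simple eigenvalue then provides $a>0$ such that, for $u\in[-a,a]^2$, $P_u$ has a unique eigenvalue $\lambda_u$ near $1$, simple and isolated, with rank-one spectral projection $\Pi_u$ and complementary operator $N_u=P_u(I-\Pi_u)$, all depending $C^3$ on $u$; the spectral gap of $P_u$ is uniform on the compact set $[-a,a]^2$. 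This gives (i) together with the first estimate of (ii): $\sup_{u\in[-a,a]^2}\|N_u^n\|=O(\nu^n)$ for some $\nu\in(0,1)$.

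The second estimate of (ii) is where the real content lies. Since $\hat\kappa$ takes values in $\ZZ^2$ the family $u\mapsto P_u$ is $2\pi\ZZ^2$-periodic, so it suffices to bound $\|P_u^n\|$ uniformly for $u$ in the compact set $[-\pi,\pi]^2\setminus[-a,a]^2$. By continuity of $u\mapsto P_u$ and upper semicontinuity of the spectral radius on $\V$, a uniform geometric bound follows as soon as the spectral radius of each such $P_u$ is strictly smaller than $1$, i.e. as soon as the cell shift $\kappa$ is \emph{aperiodic} (for no $u\ne0$ in $[-\pi,\pi)^2$ is $e^{iu\cdot\kappa}$ cohomologous, through a unimodular function, to a constant of modulus one). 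This aperiodicity, together with the non-degeneracy of $\Sigma^2$, is precisely what Sz\'asz and Varj\'u establish for the finite-horizon billiard; granting it and enlarging $\nu<1$ if necessary yields $\sup_{u\in[-\pi,\pi]^2\setminus[-a,a]^2}\|P_u^n\|=O(\nu^n)$. I expect this aperiodicity input to be the only genuinely delicate point; everything else is soft perturbation theory.

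Finally, for (iii) I would Taylor-expand $\lambda_u$ at $u=0$ from the eigenvalue relation $P_u h_u=\lambda_u h_u$ with $h_0=\mathbf 1$ normalised by $\Pi_0 h_u=h_0$: using $\int\kappa\,d\bar\mu=0$ gives $\lambda_0=1$ and $\nabla\lambda_0=0$, and the standard Nagaev computation (Green--Kubo, with convergence of the correlation sums supplied by the spectral gap of $P_0$) identifies the Hessian as $D^2\lambda_0=-\Sigma^2$, with $\Sigma^2$ the asymptotic covariance~\eqref{eq:cov}; hence $\lambda_u=1-\tfrac12\Sigma^2u\cdot u+O(|u|^3)$, the remainder being controlled by the $C^3$ regularity. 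For (iv), non-degeneracy of $\Sigma^2$ means it is positive definite; if $2\sigma$ is chosen below its least eigenvalue then $e^{-\frac12\Sigma^2u\cdot u}\le e^{-\sigma|u|^2}$ directly, and, since the second-order correction in (iii) is real with an $O(|u|^3)$ imaginary part, $|\lambda_u|=1-\tfrac12\Sigma^2u\cdot u+O(|u|^3)\le 1-\sigma|u|^2\le e^{-\sigma|u|^2}$ after shrinking $a$ so that the cubic term is absorbed. This completes the proof.
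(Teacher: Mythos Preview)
The paper does not give its own proof of this proposition: it is stated as a result already established by Sz\'asz and Varj\'u~\cite{SV} via the Nagaev--Guivarc'h method (with a pointer also to~\cite{pene1}), and is simply quoted. Your sketch is precisely an outline of that standard perturbation argument---quasi-compactness of $P_0$ on $\V$, analyticity of $u\mapsto P_u$ from boundedness and local constancy of $\hat\kappa$, Kato perturbation for the isolated simple eigenvalue, aperiodicity of $\kappa$ to control $P_u$ away from $0$, and the second-order Taylor expansion identifying the Hessian with $-\Sigma^2$---and it is correct. You are right to flag the aperiodicity (equivalently, that $1$ is not an eigenvalue of $P_u$ for $u\in[-\pi,\pi]^2\setminus\{0\}$) as the one substantive input; this is exactly what is proved in~\cite{SV} for the finite-horizon Sinai billiard, and everything else is soft spectral perturbation theory as you say.
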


Note that by taking $u=0$ in the proposition we recover the estimate on the rate of decay of correlations below. We state it here in a form suitable for our purpose, in particular to prove the results of Section~\ref{sec:recbilmap}.
\begin{theorem}[\cite{young}]\label{thm:young} There exist some constants $C>0$ and $\theta\in(0,1)$ such that for all Lipschitz functions $f$ and $g$ from $\bar M$ to $\RR$, 
\begin{equation}\label{eq:covariance}
\int f\circ T^n g d\bar\mu-\int f d\bar\mu\int 
g d\bar\mu\le C\theta^n \|f\|_{Lip}\|g\|_{Lip}.
\end{equation}
Moreover, if $f$ is the indicator function of a union of components of $\xi_{-k}^k$ and $g$ is the indicator function of a union of components of $\xi_{-k}^{+\infty}$ then the covariance in~\eqref{eq:covariance} is simply bounded by $C\theta^{n-2k}$.
\end{theorem}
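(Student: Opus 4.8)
The first estimate~\eqref{eq:covariance} is the exponential decay of correlations for Lipschitz observables proved by Young~\cite{young}; it is a consequence of the spectral gap of the transfer operator $P$ on the Banach space $\V$, i.e.\ of Proposition~\ref{pro:pertu} at $u=0$. I would simply invoke it, noting that once admissible $C,\theta$ are fixed we may enlarge either of them, and in particular arrange $C\ge1$ and $\theta\ge\nu$.

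For the refined statement, the case $n\le 2k$ is trivial, since the left-hand side of~\eqref{eq:covariance} equals $\bar\mu(\bar T^{-n}A\cap B)-\bar\mu(A)\bar\mu(B)\le 1\le C\theta^{n-2k}$. Assume $n>2k$. The plan is to spend $k$ iterates resolving the two-sided cylinders and apply the spectral gap over the remaining $n-2k$ iterates. Using the $\bar T$-invariance of $\bar\mu$, with $A_\flat:=\bar T^{-k}A$ and $B_\sharp:=\bar T^kB$ one checks directly that
\[
\int(\1_A\circ\bar T^n)\,\1_B\,d\bar\mu-\bar\mu(A)\bar\mu(B)
=\int(\1_{A_\flat}\circ\bar T^{n-2k})\,\1_{B_\sharp}\,d\bar\mu-\bar\mu(A_\flat)\bar\mu(B_\sharp),
\]
and, by the definition of the partitions, $A_\flat$ is a union of components of $\xi_0^{2k}$ while $B_\sharp$ is a union of components of $\xi_{-2k}^{\infty}$. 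Now $\1_{A_\flat}$ is constant on local stable manifolds, hence descends to a function $\hat f$ on $\hat M$ with $\hat f\circ\hat\pi=\1_{A_\flat}\circ\tilde\pi$; being an indicator, $\|\hat f\|_{L^p(\hat\mu)}\le 1$. The function $\1_{B_\sharp}$ is not constant on stable manifolds, so I would average it along the fibres of $\hat\pi$: disintegrating $\tilde\mu=\int\tilde\mu_x\,d\hat\mu(x)$ over $\hat\pi$, set $\tilde g(x)=\tilde\mu_x(\tilde\pi^{-1}B_\sharp)$, so that $\int\tilde g\,d\hat\mu=\bar\mu(B)$. Using the conjugacies $\tilde\pi\circ\tilde T=\bar T\circ\tilde\pi$, $\hat\pi\circ\tilde T=\hat T\circ\hat\pi$, $\tilde\pi_*\tilde\mu=\bar\mu$, $\hat\pi_*\tilde\mu=\hat\mu$, the right-hand side above equals
\[
\int_{\hat M}(\hat f\circ\hat T^{n-2k})\,\tilde g\,d\hat\mu-\int\hat f\,d\hat\mu\int\tilde g\,d\hat\mu
=\int_{\hat M}\hat f\cdot N_0^{\,n-2k}\tilde g\;d\hat\mu,
\]
where we used $P^{n-2k}=\Pi_0+N_0^{\,n-2k}$ and $\Pi_0\tilde g=\int\tilde g\,d\hat\mu$ (Proposition~\ref{pro:pertu}(i) at $u=0$). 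By H\"older's inequality, $\|\cdot\|_q\le\|\cdot\|$ and Proposition~\ref{pro:pertu}(ii), this is bounded in modulus by $\|\hat f\|_{L^p}\,\||N_0^{\,n-2k}\||\,\|\tilde g\|=O(\nu^{\,n-2k}\|\tilde g\|)$. Granting $\|\tilde g\|=O(1)$ uniformly in $k$, the covariance in~\eqref{eq:covariance} is $O(\nu^{n-2k})\le C\theta^{n-2k}$, as wanted.

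The step I expect to be the main obstacle is exactly the uniform bound $\|\tilde g\|=O(1)$: one must check that averaging along the stable fibres of the tower destroys the dependence on the order-$2k$ backward refinement of $B_\sharp$. Inside each local stable manifold, $B_\sharp$ is an arrangement of $O(k)$ arcs cut out by the increasing singularity curves $\bar T^jR_0$, $1\le j\le 2k$, which are uniformly transverse to the stable direction; combining this transversality with the bounded distortion of the stable holonomies underlying Young's construction, one expects $|\tilde g(x)-\tilde g(y)|=O(\beta^{s(x,y)})$ for neighbours in a partition element $\hat\Delta_{\ell,j}$ ($\beta$ being the contraction exponent in the definition of $\|\cdot\|$), so that $\|\tilde g\|=O(1)$ independently of $k$; Lemma~\ref{lem:bord} supplies the control of the scales entering this estimate. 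The remaining points — the displayed shift identity, the descent of $\1_{A_\flat}$ to $\hat M$, and the reduction to the spectral gap at $u=0$ — are routine once the tower framework of the appendix is in place.
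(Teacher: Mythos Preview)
The paper does not actually prove this theorem: it is stated as a citation to Young, and the only remark is that the Lipschitz bound follows from Proposition~\ref{pro:pertu} at $u=0$. For the cylinder refinement, the natural argument is contained (with the extra cocycle factor) in the proof of Proposition~\ref{pro:cullt}; specialising that proof to $u=0$ gives the bound directly.

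Your treatment of the Lipschitz case and of $n\le 2k$ is fine, but the cylinder argument takes a detour that creates the very obstacle you flag. You shift $B$ forward to $B_\sharp=\bar T^kB$, which lies in $\xi_{-2k}^\infty$ and therefore is \emph{not} constant on local stable manifolds; this forces you to average along stable fibres and then to prove $\|\tilde g\|=O(1)$ uniformly in $k$. The heuristic you give (transversality of the singularity curves plus bounded distortion of stable holonomies) is plausible but is not a proof, and it is exactly the kind of uniform-in-$k$ regularity statement that is delicate on Young towers.

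There is a much simpler route, the one implicit in the proof of Proposition~\ref{pro:cullt}: shift $B$ \emph{backward} instead. With $A_\flat=\bar T^{-k}A$ and $B_\flat=\bar T^{-k}B$ one has $A_\flat$ a union of $\xi_0^{2k}$ and $B_\flat$ a union of $\xi_0^\infty$, so \emph{both} indicators are constant on local stable manifolds and descend to $\hat A,\hat B\subset\hat M$. By invariance the covariance equals
\[
\int_{\hat M}\1_{\hat A}\,\1_{\hat B}\circ\hat T^{n}\,d\hat\mu-\hat\mu(\hat A)\hat\mu(\hat B)
=\int_{\hat M}\1_{\hat B}\,N_0^{\,n-2k}\bigl(P^{2k}\1_{\hat A}\bigr)\,d\hat\mu,
\]
and now $\1_{\hat B}$ sits only in $L^p$ (no $\V$-norm needed), while $\|P^{2k}\1_{\hat A}\|=O(1)$ uniformly in $A$ and $k$ by the Markov property of the tower partition --- precisely the fact invoked for $b_u^k$ in the proof of Proposition~\ref{pro:cullt}. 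H\"older and Proposition~\ref{pro:pertu}(ii) then give the bound $O(\nu^{\,n-2k})$ without any fibre averaging. In short: push both sets into the future partition and let the transfer operator, not the stable holonomy, absorb the $2k$ iterates.
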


However this information is not sufficient to control the recurrence for the extended billiard map $T$, therefore we need a finer version.

\subsection{Conditional uniform local limit theorem}~\label{sec:cullt}

Here we prove the local limit theorem, 
Proposition~\ref{pro:cullt}, concerning the billiard 
map $\bar T$ and its $\ZZ^2$-cocycle $S_n\kappa$.
\medskip

\begin{proposition41} 
Let $p>1$. There exists $c>0$ such that, 
for any $k\ge 1$, if $A\subset\bar M$ is a union of components of $\xi_{-k}^{k}$ and $B\subset \bar M$ is a union of $\xi_{-k}^\infty$ then for any $n > 2k$ and $\ell\in\ZZ^2$
\[
\left|
\bar\mu(A\cap \{S_n\kappa=\ell\}\cap \bar T^{-n}(B))
-\frac{\beta e^{-\frac1{2(n-2k)} (\Sigma^2)^{-1}\ell\cdot\ell}}{(n-2k)}\bar\mu(A)\bar\mu(B)\right|
\le \frac{ck\bar\mu(B)^{\frac1p}}{(n-2k)^\frac32}
\]
where $\beta=\frac{1}{2\pi\sqrt{\det \Sigma^2}}$.
\end{proposition41}

\begin{proof}
The set $\bar T^{-k}A$ is a union of components of $\xi_{0}^{2k}$ and $\bar T^{-k}B$ is a union of components of $\xi_0^\infty$.
Let $\hat A=\hat\pi(\tilde\pi^{-1}\bar T^{-k}A)$ and $\hat B=\hat\pi(\tilde\pi^{-1}\bar T^{-k}B)$.
Note that $\tilde\pi^{-1}\bar T^{-k}A=\hat\pi^{-1}\hat A$ and $\tilde\pi^{-1}\bar T^{-k}B=\hat\pi^{-1}\hat B$.
Setting 
\[
C_n(A,B,\ell):=\bar\mu(A ;S_n\kappa=\ell ; 
\bar T^{-n}B),
\]
we have
\[
\begin{split}
C_n(A,B,\ell)
&=
\int_{\hat M} 1_{\hat A} 1_{\{S_n\hat\kappa=\ell\}}\circ \hat T^k 1_{\hat B}\circ \hat T^n\, d\hat\mu\\
&=
\int_{\hat M} P^k(1_{\hat A}) 1_{\{S_n\hat\kappa=\ell\}} 1_{\hat B}\circ \hat T^{n-k}\, d\hat\mu\\
&=
\frac{1}{(2\pi)^2}\int_{[-\pi,\pi]^2}\gluu e^{-iu\cdot\ell}
\underbrace{ 
\int_{\hat M} P^k(1_{\hat A}) e^{iu\cdot S_n\kappa}1_{\hat B}\circ \hat T^{n-k}\, d\hat\mu}_{a(u)} du.
\end{split}
\]
We have 
\[
\begin{split}
a(u)&=\int_{\hat M} P_u^n(P^k(1_{\hat A})1_{\hat B}\circ \hat T^{n-k})\, d\hat\mu\\
&=\int_{\hat M} P_u^k(1_{\hat B} P_u^{n-k}P^k(1_{\hat A}))\, d\hat\mu\\
&=\int_{\hat M} P_u^k(1_{\hat B} P_u^{n-2k}(b_u^k))\, d\hat\mu,
\end{split}
\]
with $b_u^{k}:=P_u^{k}P^k(1_{\hat A})$. Set 
\[
a_1(u):=\int_{\hat M} P^k(1_{\hat B} P_u^{n-2k}(b_u^{k}))\, d\hat\mu.
\]
We have, since $\||P_u^k-P^k\||_{L^1\to L^1}\le 
 |u| k \|\kappa\|_\infty$,
\[
\begin{split}
\left|a(u)-a_1(u)\right|
&\le \||P_u^k-P^k\||_{L^1\to L^1}
\int_{\hat M} 1_{\hat B} |P_u^{n-2k}(b_u^{k})|\, d\hat\mu\\
&\le\|\kappa\|_\infty k |u|\|P_u^{n-2k}b_u^{k}\| \hat\nu(\hat B)^{1/p}
\end{split}
\]
by the H\"older inequality and since the norm 
$\|\cdot\|$ dominates the $L^q$ norm.
Let us notice that by the Markov property $\sup_{u\in[-\pi,\pi]^2}\|b_u^{k}\|=O(1)$, uniformly in $A$ and $k$. We have by Proposition~\ref{pro:pertu} (i) and (ii)
\[
\frac{1}{(2\pi)^2}\int_{[-\pi,\pi]^2}|u|\||P_u^{n-2k}\|| du =
\int_{[-a,a]^2}|u||\lambda_u|^{n-2k}du+O(\nu^{n-2k}).
\]
In addition, by Proposition~\ref{pro:pertu} (iv) we have
\begin{equation}\label{eq:bouli}
\int_{[-a,a]^2}\gluu |u||\lambda_u|^{n-2k}du
\le
\frac{1}{(n-2k)^{\frac32}}\int_{\RR^2}|v|e^{-\sigma|v|^2}dv
=
O\left(\frac{1}{(n-2k)^{\frac32}}\right),
\end{equation}
with the change of variable $v=\sqrt{n-2k}u$. 
Therefore 
\[
C_n(A,B,\ell) = \frac{1}{(2\pi)^2}
 \int_{[-\pi,\pi]^2} e^{-iu\cdot \ell}
\int_{\hat M} 1_{\hat B} P_u^{n-2k}(b_u^k) \, 
d\hat\mu\, du + O\left(\frac{k\bar\mu(B)^{\frac1p}}{(n-2k)^\frac32}\right)
\]
By the H\"older inequality and since the norm 
$\|\cdot\|$ dominates the $L^q$ norm and according
to points (i) and (ii) of proposition~\ref{pro:pertu},
we have~:
\[
C_n(A,B,\ell) = \frac{1}{(2\pi)^2}
 \int_{[-a,a]^2} e^{-iu\cdot \ell}
\int_{\hat M} 1_{\hat B} \lambda_u^{n-2k}\Pi_u(b_u^k) 
\, 
d\hat\mu\, du + O\left(\frac{k\bar\mu(B)^{\frac1p}}
{(n-2k)^\frac32}\right).
\]
We will use here and thereafter the notation $f_u=O(g_u)$ to mean that there exists some constant $c_*$ such that for all $u\in[-a,a]^2$, we have $|f_u|\le c_* |g_u|$.
The differentiability of $u\mapsto \Pi_u$ gives $|\Vert \Pi_u-\Pi_0|\Vert=O(\vert u\vert)$.
Hence using formula~\eqref{eq:bouli}, we get
\[
C_n(A,B,\ell) = \frac{1}{(2\pi)^2}
 \int_{[-a,a]^2} e^{-iu\cdot \ell}\lambda_u^{n-2k} \, du
\hat\mu(\hat B) \int_{\hat M} b_u^k
\, d\hat\mu + O\left(\frac{k\bar\mu(B)^{\frac1p}}
{(n-2k)^\frac32}\right).
\]
For any $u\in[-a,a]^2$, we have
\[
\int_{\hat M} b_u^k \, d\hat\mu
= \int_{\hat M}e^{iu\cdot S_k\hat\kappa}
   P^k({\bf 1}_{\hat A})\, d\hat\mu= \int_{\hat M}e^{iu\cdot S_k\hat\kappa}\circ\hat T^k
     {\bf 1}_{\hat A}\, d\hat\mu= \hat\mu(\hat A)+O(\vert u\vert).
\]

Again, using formula~\eqref{eq:bouli} we have
\[
C_n(A,B,\ell) = \frac{1}{(2\pi)^2}
 \int_{[-a,a]^2} e^{-iu\cdot \ell}\lambda_u^{n-2k} \, du
\bar\mu(B) \bar\mu( A)
+ O\left(\frac{k\bar\mu(B)^{\frac1p}}
{(n-2k)^\frac32}\right).
\]
According to the point (iii) of 
proposition~\ref{pro:pertu}, we have
\[
\left\vert \lambda_u^{n-2k}
   - e^{-\frac{n-2k}2\Sigma^2u\cdot u} \right\vert
   \le c_*(n-2k) e^{-\sigma\vert u\vert^2(n-2k-1)}O(\vert u\vert^3).
\]
Hence, proceeding similarly as in formula~\eqref{eq:bouli}, we get
\[
\begin{split}
C_n(A,B,\ell) &= \frac{\bar\mu(B) \bar\mu( A)}{(2\pi)^2}
 \int_{[-a,a]^2} e^{-iu\cdot \ell}
e^{-\frac{n-2k}2\Sigma^2u\cdot u} 
 \, du
+ O\left(\frac{k\bar\mu(B)^{\frac1p}}
{(n-2k)^\frac32}\right)\\
&= \frac{\bar\mu(B) \bar\mu( A)}{(2\pi)^2(n-2k)}
 \int_{{\mathbb R}^2} e^{-i\frac{v\cdot \ell}
{\sqrt{n-2k}}}
e^{-\frac{1}2\Sigma^2v\cdot v} 
 \, dv
+ O\left(\frac{k\bar\mu(B)^{\frac1p}}
{(n-2k)^\frac32}\right)
\end{split}
\]
with the change of variable $v=u\sqrt{n-2k}$. Finally, using the formula of the characteristic function of a gaussian, we get
\[
C_n(A,B,\ell)=
\frac{\bar\mu(A)\bar\mu(B)}{(2\pi)^2(n-2k)} 2\pi\sqrt{\det(\Sigma^2)^{-1}}e^{-\frac{(\Sigma^2)^{-1}
  \ell\cdot\ell}{2(n-2k)}}
+ O\left(\frac{k\bar\mu(B)^{\frac1p}}
{(n-2k)^\frac32}\right),
\]
which proves the result after obvious simplifications.
\end{proof}


\end{document}